\documentclass[aos]{imsart2}

\RequirePackage{amsthm,amsmath,amsfonts,amssymb}
\RequirePackage[authoryear]{natbib}
\RequirePackage[colorlinks,citecolor=blue,urlcolor=blue]{hyperref}
\RequirePackage{graphicx}

\startlocaldefs
\theoremstyle{plain}

\newtheorem{theorem}{Theorem}[section]
\newtheorem{proposition}[theorem]{Proposition}
\newtheorem{lemma}[theorem]{Lemma}
\theoremstyle{definition}
    
\newtheorem{assumption}{Assumption}

\newtheorem*{remark}{Remark}

\endlocaldefs

\newcommand{\one}{\mathbf{1}}
\newcommand{\zero}{\mathbf{0}}
\newcommand{\PR}{\mathbb{P}}

\newcommand{\R}{\mathbb{R}}
\newcommand{\E}{\mathbb{E}}

\newcommand{\xb}{\bar{X}}
\newcommand{\xbs}{\bar{x}}
\newcommand{\yb}{\bar{Y}}

\newcommand{\yt}{\tilde{Y}}
\newcommand{\ybh}{\hat{\bar{Y}}}
\newcommand{\vh}{\hat{V}}
\newcommand{\sh}{\hat{S}}
\newcommand{\vhn}{\hat{V}_{\mathrm{ney}}}
\newcommand{\vhns}{\vhn^\mathrm{S}}
\newcommand{\tauh}{\hat{\tau}}

\newcommand{\tauhm}{\hat{\tau}^{\mathrm{mid}}}
\newcommand{\sigt}{\tilde{\sigma}}

\newcommand{\bern}{\mathrm{Bern}}
\newcommand{\cre}{\mathrm{CRE}}
\newcommand{\piu}{\underline{\pi}}
\newcommand{\nb}{\bar{n}}

\DeclareMathOperator{\diag}{diag}
\DeclareMathOperator{\tr}{tr}
\DeclareMathOperator{\ent}{Ent}
\DeclareMathOperator{\var}{Var}

\newcommand\indep{\protect\mathpalette{\protect\independenT}{\perp}}
\def\independenT#1#2{\mathrel{\rlap{$#1#2$}\mkern2mu{#1#2}}}

\begin{document}

\begin{frontmatter}
\title{Randomization Inference with Concentration Inequalities}
\runtitle{Randomization Inference with Concentration Inequalities}

\begin{aug}
\author{\fnms{Tobias}~\snm{Freidling} \ead[label=e1]{tobias.freidling@epfl.ch}\orcid{0000-0003-0724-4297}}
\address{Institute of Mathematics, \'Ecole Polytechnique F\'ed\'erale de Lausanne \printead[presep={ ,\ }]{e1}}
\end{aug}

\begin{abstract}
Randomization or design-based inference is becoming an increasingly popular tool for analysing data from randomized experiments: It does not require modelling assumptions on the distribution of outcomes or covariates, and hypothesis testing and estimation are respectively valid and unbiased in finite samples. Yet, confidence intervals for the sample average treatment effect (SATE) are still constructed via finite-population central limit theorems and their coverage is only asymptotic. In this work, we explore an alternative approach: We use concentration inequalities to construct confidence intervals for the SATE with \emph{non-asymptotic} guarantees. We develop this approach for the most common experimental designs (Bernoulli trials and completely randomized experiments) and provide Hoeffding and Bernstein-type confidence intervals. Moreover, we extend these results to matched-pair, cluster and stratified randomized experiments. Our key technical contributions are a novel Bernstein-type concentration inequality for i.i.d.\ data points as well as a concentration result for Neyman's variance estimator.
\end{abstract}



\end{frontmatter}

\section{Introduction}

Randomized experiments are conducted in almost every area of applied statistics as they are the most credible method to elicit causal relationships. While there are many different methods tailored to the specific characteristics of particular experiments, most of them are based on a \emph{random sampling paradigm}:
The data points are assumed to be independent draws from an underlying, infinitely large super-population; alternatively, one may posit that there exists a data-generating distribution (or rather a statistical model) which the observations are sampled from. Both perspectives lead to the familiar assumption of i.i.d.\ data.

In some cases, the existence of such an infinite super-population or a data-generating mechanism may be justifiable or at least a good approximation; in other situations, however, it may be dubious. \cite{manski_how_2018} discuss such an example in the context of crime data in the USA: If the unit of interest is defined as a US citizen, assuming an infinite super-population can be a reasonable approximation to the true super-population of roughly 340 million Americans. Yet, if a unit is one of the 50 American states in a certain year, the existence of an infinitely large super-population of states or a probabilistic process that creates them seems far-fetched.


This fundamental issue can be circumvented by developing methods based on a \emph{random design paradigm} instead: We only consider units in the dataset -- hence a finite population, condition on their covariates and outcomes and only use the randomness in the assignment to different treatment arms of the study for inference. Since the treatment assignment distribution is known in a randomized experiment, we can not only avoid the assumption of an infinite super-population but also any, potentially wrong modeling choices. We refer to \citet{imbens_causal_2015,zhang_2023_randomization_test,ding_first_2024} for a review of randomization/design-based inference. While assumptions on the sampling mechanism from a (finite or infinite) super-population are still needed to assess external validity of the findings, inference for the observed units does \emph{not} require them. 


In randomized experiments, we are often interested in the sample average treatment effect (SATE) for the finite population of $n$ recruited units, which is defined as
\begin{equation*}
    \tau_n = \frac{1}{n} \sum_{i=1}^n Y_i(1)-Y_i(0).
\end{equation*}
Here, $Y_i(1)$ and $Y_i(0)$ describe the outcomes of the $i$-th unit under treatment and control, respectively. One of the most common treatment assignment mechanisms is the completely randomized experiment (CRE): We fix the size of the treated and control group a priori and then accordingly
choose $n_1$ of the $n$ recruited units at random to receive treatment ($Z_i=1$) and the remaining $n_0=n-n_1$ units to receive control ($Z_i=0$). Estimation under this and many other designs is comparatively straightforward as we can use the unbiased Horvitz-Thompson (HT) estimator
\begin{equation*}
    \tauh_n = \frac{1}{n_1} \sum_{i=1}^n Z_i Y_i(1)-\frac{1}{n_0}\sum_{i=1}^n(1-Z_i)Y_i(0).
\end{equation*}

Uncertainty quantification, on the other hand, is somewhat more involved. To construct confidence intervals for the SATE or related estimands, the predominant approach is based on finite population central limit theorems, see \citet{hajek_1960,li_general_2017} and references therein. The corresponding result in our setting states that under some regularity conditions
\begin{equation*}
    \frac{\tauh_n-\tau_n}{\sqrt{\var(\tauh_n)}} \,\to\, \mathcal{N}(0, 1),\quad \text{as}\quad n \to\infty.
\end{equation*}
The confidence interval that stems from inverting this central limit theorem only has asymptotic coverage guarantees which may be undesirable when we are interested in a finite population. Yet, more concerningly, when the assumption of an infinite super-population is not plausible, it is unclear if taking the limit $n\to \infty$ is sensible in the first place. This is also reflected in the fact that not only the estimator~$\tauh_n$ but also the estimand~$\tau_n$ depends on the recruited units.

In this article, we develop an alternative approach for constructing confidence intervals in the randomization inference framework. We derive novel concentration inequalities for the most common experimental designs and invert them to obtain confidence intervals for the SATE. These exhibit \emph{non-asymptotic} coverage and do not require taking the limit $n\to \infty$ or any notion of a super-population.







\subsection{Related Work} While the random sampling and design paradigm are philosophically quite different, the methods developed in the respective frameworks may be surprisingly similar. For instance, we can often use the same estimator for the variance in the finite-population CLT as in the familiar super-population CLT. \citet{imai_misunderstandings_2008} and \citet{ding_bridging_2017} elaborate on such differences and similarities. Moreover, a design-based approach can also incorporate an assumption on the sampling process from a finite super-population to address external validity. This idea is formulated and developed by \citet{miratrix_worth_2018}, \citet{abadie_samplingbased_2020} and \citet{yang_rejective_2023}, which treat mean estimation, regression analysis and re-randomization designs, respectively.

Randomization inference in causal inference is closely related to and shares overlapping origins with the fields of survey sampling and permutation tests. Survey sampling does typically not involve counterfactual variables and the associated impossibility of obtaining all data points of interest, but it may still be impractical to gather data from an entire (finite) population. Hence, one may recruit units at random to estimate a specific population quantity. Here, the random recruitment plays the role of the treatment assignment in a causal experiment; we refer to \citet{mercer_theory_2017} for a comparison of survey sampling and causal inference. While methods in survey sampling were predominantly developed under a design-based paradigm \citep{neyman_two_1934,cochran_sampling_1977}, model-based ideas have been influential, as well \citep{srndal_design-based_1978,sarndal_1992}. Permutation and randomization tests arose from the same strand of literature, e.g.\ \citet{fisher_design_1935} and \citet{pitman_significance_1937}, are in some cases computationally identical and can provide non-asymptotic guarantees. Hence, they are often not clearly distinguished from one another despite relying on quite different assumptions: Randomization inference is anchored in the \emph{design} of an experiment, whereas permutation tests invoke \emph{sampling} from an infinite super-population \citep{kempthorne_behaviour_1969,ernst_permutation_2004,lehmann_2006}. For a more in-depth discussion, we refer to \citet{zhang_2023_randomization_test}.

Concentration inequalities allow us to construct non-asymptotic tests and confidence intervals under minimal assumptions and have therefore come more into the spotlight of statistical research in recent years. In the context of surveys, results for sampling without replacement are of particular interest. In his seminal paper, \citet{serfling_probability_1974} showed that in this setting the usual Hoeffding inequality can indeed be improved -- a result which was subsequently refined by \citet{bardenet_concentration_2015} and generalized to more sampling schemes by \citet{bertail_bernstein-type_2019}. Concentration inequalities are also an integral tool in the growing literature on E-values and anytime-valid inference \citep{ramdas_game-theoretic_2023}. Here, the data is assumed to be observed sequentially and the goal is constructing a confidence sequence for a parameter of interest. \citet{waudby-smith_confidence_2020} treat the case of sequential sampling without replacement, whereas \citet{howard_time-uniform_2021} develop the larger theory in great generality. The resulting inference typically depends on the order of the data points. By contrast, our article considers the usual static setting and we derive confidence intervals that are invariant to this ordering.

Closest related to our work are the following articles. \citet{aronow_nonparametric_2025} and \citet{ding_what_2025} briefly discuss the idea of using a Hoeffding concentration inequality to construct a finite-sample confidence interval for the SATE, but do not develop this approach in detail. \citet{sandoval_nonasymptotic_2026} have recently introduced new Hoeffding-type concentration inequalities for the SATE under the completely randomized and Bernoulli design. The pertaining confidence intervals are particularly useful for small treatment assignment probabilities and provide a good benchmark for our results. \citet{li_exact_2016} propose non-asymptotic confidence intervals in a CRE when the outcomes are binary. In this restricted setting, we do not need to resort to concentration inequalities as the distribution of the HT estimator can actually be characterized. Lastly, \citet{shi_berryesseen_2026} derive Berry-Esseen bounds for the HT and related estimators which give further insight into their convergence behaviour to the normal distribution. While these results are still asymptotic, they are a step towards finite-sample guarantees and underline the interest in them.

\subsection{Organization of the Article} Section~\ref{sec:setting} introduces our notation, the class of $m$-centred estimators, which contains the standard Horvitz-Thompson estimator, as well as the assumptions that we make going forward. In Section~\ref{sec:bernoulli}, we investigate experiments with independent treatment assignments. We state and derive the relevant concentration inequalities, construct Hoeffding and Bernstein-type confidence intervals and extend our results to matched-pair designs. Section~\ref{sec:cre} deals with completely randomized treatment assignment schemes that fix the numbers of units assigned to the different arms. We derive Hoeffding and Bernstein confidence intervals and generalize our findings to cluster and stratified randomized experiments. Lastly, we discuss our results and elaborate on possible future research directions in Section~\ref{sec:discussion}. Most proofs and additional theorems are deferred to the appendix.

\section{Setting}\label{sec:setting}
We consider a finite population of $n$ units and adopt the framework of potential outcomes \citep{splawa-neyman_application_1990,rubin_estimating_1974}. According to this causal model, there exist two potential outcomes $Y_i(0)$ and $Y_i(1)$ for each unit $i \in [n]:= \{1,\ldots,n\}$ which correspond to the outcome under control and treatment, respectively. Here, we implicitly make the familiar assumptions of 'no interference', i.e.\ the outcomes of one unit do not depend on the treatment received by other units. Moreover, we may observe the covariate information~$L_i$ for the $i$-th unit. In the following, we use the notations $Y(\cdot) := (Y_i(0),Y_i(1))_{i\in[n]}$ and $L:=(L_i)_{i\in[n]}$.

The treatments assigned to the units in the experiment are denoted $Z := (Z_1,\ldots,Z_n) \in \{0,1\}^n$. In randomization inference, we condition on the potential outcomes and covariates and use the treatment assignment distribution $Z \mid Y(\cdot),L \sim \PR$ as the basis for inference. Going forward, we drop the conditioning on $Y(\cdot)$ and $L$ to keep the statements concise. The marginal treatment probability of the $i$-th unit is defined as $\pi_i := \E[Z_i]$ and we make the following assumption.
\begin{assumption}[Randomized Experiment]\label{ass:rand-exp} $Y(\cdot) \indep Z \mid L$, the treatment assignment distribution $\PR$ is known and $0 < \pi_i < 1$ for all $i\in[n]$.
\end{assumption}
Assumption~\ref{ass:rand-exp} contains
the usual (strong) exchangeability/ignorability and positivity conditions \citep{hernan2020causal,ding_first_2024} and was formulated in the same way by \citet{zhang_2023_randomization_test}. Note that the experimenter can choose the distribution $\PR$ and thus ensure that Assumption~\ref{ass:rand-exp} is satisfied. 

The estimand of interest is the sample average treatment effect (SATE) which is defined as $\tau_n := \frac{1}{n} \sum_{i=1}^n Y_i(1) - Y_i(0)$ and can be estimated by the classical \citet{horvitz_generalization_1952} estimator, abbreviated as HT estimator. In this article, we introduce the more general class of $m$-centred estimators that encompasses all
\begin{equation*}
    \tauh_n^{m} := \frac{1}{n} \sum_{i=1}^n \tauh^m_{n,i} := \frac{1}{n} \sum_{i=1}^n \big(Y_i(1)-m\big)\, \frac{Z_i}{\pi_i}-\big(Y_i(0)-m\big)\, \frac{1-Z_i}{1-\pi_i},
\end{equation*}
where $m \in\R$. It is a subset of the general class of unbiased SATE estimators described by \citet{aronow_class_2013} and contains the vanilla HT estimator as a special case ({$\tauh_n=\tauh_n^0$}). Of particular interest will be $\tauh^{(a+b)/2}_n$ which was named \emph{midpoint-differenced estimator} in a recent paper by \citet{aronow_minimax_2026} and will be abbreviated as $\tauhm_n$ in the following. All $m$-centred estimators are unbiased but may exhibit different concentration properties under some designs.

We can compute $\tauh^m_n$ from the collected data if we invoke the standard assumption that observed and potential outcomes are 'consistent'. That is, if unit $i \in [n]$ received treatment $Z_i$ and the outcome $Y_i$ was observed, then $Y_i(Z_i) = Y_i$. In this work, we focus on the Horvitz-Thompson-type estimators as they are both unbiased and linear in the treatment assignments which facilitates deriving concentration inequalities. The estimator proposed by \citet{hajek_1958,hajek_comment_1971} often has smaller variance, but is slightly biased and involves the ratio of two linear expressions in $Z$ which compounds constructing non-asymptotic confidence intervals.

Lastly, we assume that the potential outcomes are bounded in a real-valued interval $[a,b]$.
\begin{assumption}[Boundedness]\label{ass:bounded}
    $Y_i(0),Y_i(1) \in [a,b]$ for all $i \in [n]$.
\end{assumption}
This is the key condition that allows us to construct concentration inequalities and subsequently confidence intervals. Besides Assumptions~\ref{ass:rand-exp} and~\ref{ass:bounded}, no further prerequisites or regularity conditions are needed.

\section{Independent Assignment}\label{sec:bernoulli}
First, we focus on the case of independent treatment assignments, i.e.\ the random variables $Z_1,\ldots,Z_n$ are jointly independent (but not necessarily equally distributed). Such an experiment is known as a Bernoulli trial and denoted by $Z \sim \bern(\pi_1,\ldots,\pi_n)$, where the treatment assignment probabilities $\pi_i$ can be chosen based on the covariates. For this design, we can rely on classical concentration results. We start by reviewing them and introduce a new, slightly improved concentration inequality which may also be of independent interest. Then, we apply these tools to construct Hoeffding and Bernstein-type confidence intervals for Bernoulli trials and extend our results to matched-pair designs.

\subsection{Concentration Inequalities} We begin our review with Hoeffding's inequality and state the general version that allows the random variables to have different support.
\begin{theorem}[\cite{hoeffding_probability_1963}, Thm.~2]\label{thm:hoeffding}
    Let $X_1,\ldots,X_n$ be independent random variables which have respective means $\mu_1,\ldots,\mu_n$ and assume that $X_i \in [l_i,u_i]$ almost surely for all $i\in\{1,\ldots,n\}$. Let $\delta \in (0,1)$. Then, with probability $1-\delta$,
    \begin{equation*}
        \left\vert \frac{1}{n}\sum_{i=1}^n X_i-\mu_i \right\vert \leq \sqrt{\tfrac{1}{n}{\textstyle\sum_{i=1}^n  (u_i-l_i)^2}}\,\sqrt{\frac{\log(2/\delta)}{2n}}.
    \end{equation*}
\end{theorem}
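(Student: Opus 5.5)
We would prove the statement by the Chernoff (exponential Markov) method combined with Hoeffding's lemma, and then invert the resulting tail bound. Write $S_n := \sum_{i=1}^n (X_i-\mu_i)$ and $D := \sum_{i=1}^n (u_i-l_i)^2$. For any $\lambda>0$, Markov's inequality applied to $e^{\lambda S_n}$ gives $\PR(S_n \ge t) \le e^{-\lambda t}\,\E[e^{\lambda S_n}]$. By independence the moment generating function factorizes, $\E[e^{\lambda S_n}] = \prod_{i=1}^n \E[e^{\lambda(X_i-\mu_i)}]$.

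The key step is Hoeffding's lemma: if $X\in[l,u]$ almost surely with mean $\mu$, then $\E[e^{\lambda(X-\mu)}]\le \exp(\lambda^2(u-l)^2/8)$. We would prove it in the standard way.
\begin{itemize}
\item Convexity of $x\mapsto e^{\lambda x}$ on $[l,u]$ bounds $e^{\lambda X}$ by the chord through the endpoints.
\item Taking expectations gives $\log \E[e^{\lambda(X-\mu)}] \le \varphi(h)$, where $h=\lambda(u-l)$, $p=(\mu-l)/(u-l)$, and $\varphi(h) = -hp + \log(1-p+pe^{h})$.
\item One checks $\varphi(0)=\varphi'(0)=0$ and $\varphi''(h) = q(1-q)\le 1/4$, where $q = pe^h/(1-p+pe^h)\in[0,1]$.
\item Taylor's theorem then yields $\varphi(h)\le h^2/8$.
\end{itemize}
This lemma is the only genuinely nontrivial ingredient; the rest is bookkeeping.

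Combining these bounds gives $\PR(S_n\ge t)\le \exp(-\lambda t + \lambda^2 D/8)$. Minimizing at $\lambda = 4t/D$ yields $\PR(S_n\ge t)\le \exp(-2t^2/D)$. Applying the same argument to $-X_i\in[-u_i,-l_i]$ gives the lower tail, and a union bound gives $\PR(|S_n|\ge t)\le 2\exp(-2t^2/D)$.

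Finally, we set the right-hand side equal to $\delta$, which gives $t=\sqrt{D\log(2/\delta)/2}$. Dividing by $n$, the event $|S_n|/n \le t/n$ holds with probability at least $1-\delta$. It remains to note that
\begin{equation*}
\frac{t}{n}=\sqrt{\tfrac{1}{n}D}\,\sqrt{\frac{\log(2/\delta)}{2n}},
\end{equation*}
which matches the stated bound exactly. The main obstacle is the second-derivative bound in Hoeffding's lemma; we would handle it by recognizing $\varphi''(h)$ as the variance of a Bernoulli$(q)$ variable, which is at most $1/4$.
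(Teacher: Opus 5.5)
Your proof is correct and is the standard argument. The paper gives no proof of its own and simply cites Hoeffding (1963, Thm.~2), whose derivation is exactly this Chernoff bound combined with the convexity-based lemma $\E[e^{\lambda(X-\mu)}]\le e^{\lambda^2(u-l)^2/8}$; your choice $\lambda=4t/D$ and the inversion reproduce the stated constant exactly. The only omission is the trivial degenerate case $u_i=l_i$: there $p$ is undefined, but $X_i=\mu_i$ almost surely, so that factor of the moment generating function equals $1$ and the bound still holds.
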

This theorem is quite elegant and provides good bounds when the distributions of the random variables are rather dispersed. If they are concentrated, however, Bernstein inequalities \citep{bernstein1924,bernstein1946} can provide tighter bounds as they replace the length of the interval with a variance expression in the leading term. Since the variance is typically unknown, we have to estimate it and account for the additional uncertainty. Such results are consequently referred to as empirical Bernstein inequalities. Here, we state a widely used theorem by \citeauthor{maurer_empirical_2009} in a more general version, where the random variables can have different support.\footnote{The general version directly follows from applying \citeauthor{maurer_empirical_2009}'s original theorem to the transformed random variables $1/2+(X_i-c_i)/L \in [0,1]$.}


\begin{theorem}[\cite{maurer_empirical_2009}, Thm.~11]\label{thm:mp}
    Assume the setting of Theorem~\ref{thm:hoeffding} and denote the centred sample variance $\vh = \frac{1}{n(n-1)} \sum_{i<j} \big((X_i-c_i) - (X_j-c_j)\big)^2$, where $c_i := (l_i+u_i)/2$. Define $L:=\max_{1\leq i \leq n}(u_i-l_i)$ and let $\delta \in (0,1)$. 
    Then, with probability $1-\delta$,
    \begin{equation*}
        \left\vert \frac{1}{n}\sum_{i=1}^n X_i-\mu_i \right\vert \leq \sqrt{\frac{2 \vh \log(3/\delta)}{n}} + \frac{7L \log(3/\delta)}{3(n-1)}.
    \end{equation*}
\end{theorem}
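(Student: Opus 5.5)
The plan is to reduce the statement to Maurer and Pontil's original Theorem~11, which covers i.i.d.\ or merely independent variables in $[0,1]$: with probability $1-\delta$,
\begin{equation*}
    \Big|\tfrac1n\textstyle\sum_i (W_i-\E W_i)\Big| \le \sqrt{2\vh_W\log(3/\delta)/n}+\frac{7\log(3/\delta)}{3(n-1)} ,
\end{equation*}
where $\vh_W=\frac{1}{n(n-1)}\sum_{i<j}(W_i-W_j)^2$. In the two-sided form this is obtained by combining the one-sided Bernstein bound for the true variance, the one-sided concentration of the sample standard deviation, and a union bound over three events, which produces the $3/\delta$. I take this as given and only transfer it to heterogeneous supports $[l_i,u_i]$, as the footnote indicates.

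First I define $c_i=(l_i+u_i)/2$ and $W_i:=\tfrac12+(X_i-c_i)/L$. Since $|X_i-c_i|\le (u_i-l_i)/2\le L/2$, each $W_i$ lies in $[0,1]$, and the $W_i$ inherit independence from the $X_i$. Their means are $\E W_i=\tfrac12+(\mu_i-c_i)/L$, so
\begin{equation*}
    \frac1n\sum_{i=1}^n (W_i-\E W_i)=\frac{1}{nL}\sum_{i=1}^n(X_i-\mu_i).
\end{equation*}
Next, $W_i-W_j=\big((X_i-c_i)-(X_j-c_j)\big)/L$, so $\vh_W=\vh/L^2$, where $\vh$ is exactly the centred sample variance in the statement. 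Substituting into Maurer--Pontil and multiplying by $L$ then gives
\begin{equation*}
    \Big|\tfrac1n\textstyle\sum_i X_i-\mu_i\Big|\le L\sqrt{2\vh\log(3/\delta)/(nL^2)}+\frac{7L\log(3/\delta)}{3(n-1)},
\end{equation*}
which is the claim.

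The main obstacle is to check that the original theorem really allows non-identically distributed variables. The variance appearing in Maurer--Pontil's Bernstein step is then the average of the individual variances. Meanwhile $\E\vh_W$ is that average plus a nonnegative spread term $\frac{1}{n(n-1)}\sum_{i<j}(\E W_i-\E W_j)^2$, so the sample variance overestimates it, which keeps the bound valid (only conservative). Their self-bounding argument for $\sqrt{\vh_W}$ uses only independence and boundedness in $[0,1]$, so it still applies.

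The centring by $c_i$ is what makes this reduction work. Without it, a different $c_i$ would give a different $\vh$, and the rescaling by $L$ would not map every support into $[0,1]$.
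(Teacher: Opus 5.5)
Your proposal is correct and follows the paper's own argument exactly: the paper justifies this generalization only in a footnote, by applying Maurer and Pontil's original theorem to $\tfrac12+(X_i-c_i)/L\in[0,1]$, and your computations of $\E W_i$, of $\hat{V}_W=\vh/L^2$, and the final rescaling by $L$ correctly spell out that reduction. Your two supporting remarks are also accurate, and the paper takes both for granted: the two-sided $\log(3/\delta)$ form comes from a union bound over three events, and in the non-identically distributed case $\E\hat{V}_W$ exceeds the average variance by a nonnegative spread term.
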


This result arises from combining a Bernstein inequality with known variance with a separate concentration inequality for the estimator $\vh$ via the union bound. As the literature on self-normalizing processes \citep{de_la_pena_self-normalized_2009} demonstrates, the union bound argument can sometimes be improved by deriving a \emph{single} concentration inequality for a suitably normalized quantity. This idea has been adopted by the growing anytime-valid inference community in order to derive uniform martingale concentration inequalities, see e.g.\ \citet{howard_time-uniform_2021, waudby-smith_estimating_2024}. In the static setting that we consider, these inequalities remain valid but they depend on the order of the data points.

To remedy this undesirable feature, we employ a symmetrization technique recently introduced by \citet{barber_hoeffding_2024} and derive a concentration result that both improves \citet{maurer_empirical_2009}'s empirical Bernstein inequality and is invariant under re-ordering of the data points. We provide a tighter, implicit bound akin to Bennett's inequality and a looser, explicit bound akin to Bernstein's inequality. The proof is deferred to Appendix~\ref{app:self-normalizing-conc-inq}.
\begin{theorem}\label{thm:mine-bernstein}
    Assume the setting of Theorem~\ref{thm:mp}, define $\epsilon_n := \frac{1}{n} \sum_{j=1}^{n} \frac{1}{j} =\mathcal{O}(\frac{\log(n)}{n})$ and let $\delta \in (0,1)$. Then, with probability $1-\delta$,
    \begin{align*}
        \left\vert\frac{1}{n}\sum_{i=1}^n X_i - \mu_i\right\vert 
        &\leq (1+\epsilon_{n-1})\frac{\vh}{L}\, h_1^{-1}\!\left(\frac{L^2\log(2/\delta)}{(1+\epsilon_{n-1})(n-1)\vh}\right),\\
        &\leq\sqrt{\frac{2(1+\epsilon_{n-1})\,\vh \,\log(2/\delta)}{n-1}} + \frac{L\log(2/\delta)}{n-1},
    \end{align*}
    where $h_1(u) := u - \log(1+u)$.
\end{theorem}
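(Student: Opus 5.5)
Without loss of generality I will shift and rescale so that $Y_i := (X_i-c_i)/L \in[-1/2,1/2]$, which reduces the claim to $L=1$; write $\mu_i' := (\mu_i-c_i)/L$. The idea is to build, for a fixed ordering $\sigma$ of $[n]$, a nonnegative supermartingale that pairs each increment with a \emph{predictable} centring, and then to remove the dependence on the ordering by averaging over all $n!$ permutations, following \citet{barber_hoeffding_2024}.

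\textbf{Step 1 (per-step exponential bound).} Let $\bar Y_{k-1}$ be the running mean of the first $k-1$ terms in the order $\sigma$. The deviation $D_k := (Y_{\sigma(k)}-\mu'_{\sigma(k)})$ has mean zero and is bounded by $1$ in absolute value. I will use the Bennett/Fan-type inequality
\begin{equation*}
\E\bigl[\exp\bigl(\lambda D_k - \psi(\lambda)(Y_{\sigma(k)}-\bar Y_{k-1})^2\bigr)\mid\mathcal F_{k-1}\bigr]\le 1,
\end{equation*}
with $\psi(\lambda)=-\log(1-\lambda)-\lambda$ for $\lambda\in[0,1)$. This rests on the deterministic inequality $\exp(\lambda x-\psi(\lambda)x^2)\le 1+\lambda x$ for $x\ge -1$, applied to $x = Y_{\sigma(k)}-\bar Y_{k-1}$ after adding and subtracting the predictable quantity $\mu'_{\sigma(k)}-\bar Y_{k-1}$. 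This is the Howard et al./Waudby-Smith et al. empirical-Bernstein step. The product over $k$ then gives a supermartingale, and hence
\begin{equation*}
\E\exp\Bigl(\lambda\textstyle\sum_k D_k-\psi(\lambda)\sum_k (Y_{\sigma(k)}-\bar Y_{k-1})^2\Bigr)\le 1.
\end{equation*}

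\textbf{Step 2 (symmetrization).} Average this over all $\sigma$ and apply Jensen to the convex exponential. Since $\sum_k D_k$ does not depend on $\sigma$, the only thing that changes is the quadratic term, which becomes its permutation average. The key computation, and in my view the main obstacle, is to show
\begin{equation*}
\frac1{n!}\sum_\sigma\sum_{k}(Y_{\sigma(k)}-\bar Y_{k-1})^2\le (1+\epsilon_{n-1})(n-1)\vh/L^2\ (\text{rescaled}).
\end{equation*}
Under a uniformly random order, $\E(Y_{\sigma(k)}-\bar Y_{k-1})^2$ equals a sample-variance-type quantity times $(1+\tfrac1{k-1})$, via the standard identity for sampling without replacement. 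Summing $1+1/(k-1)$ over $k=2,\dots,n$ yields exactly $(n-1)(1+\epsilon_{n-1})$ times the pairwise variance $\vh$. The $k=1$ term, where no past mean is available, has to be handled either by centring at $0$ (bounded using $|Y|\le1/2$) or by starting the process at $k=2$. I expect getting the constant to match $\epsilon_{n-1}$ exactly to take care.

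\textbf{Step 3 (Chernoff and optimization).} Markov's inequality then gives, with probability $1-\delta/2$, $\lambda\sum D_k\le \psi(\lambda)(1+\epsilon_{n-1})(n-1)\vh+\log(2/\delta)$. The quantity $\vh$ is random, so this must hold for all $\lambda$ simultaneously. I will handle that by bounding $\sup_\lambda[\lambda s-\psi(\lambda)V]$, which is legitimate because the supermartingale bound holds for each fixed $\lambda$. Alternatively, I will use the stitching/mixture argument or note that the implicit bound is obtained by minimizing over $\lambda$ on the event. The Legendre transform of $\psi$ gives $V h_1^{-1}$-type expressions: solving $\sup_\lambda(\lambda t-\psi(\lambda)V)=\log(2/\delta)$ produces $t=(V/(n-1))\,h_1^{-1}(\log(2/\delta)/V)$ up to normalization. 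Undoing the scaling by $L$ gives the first bound. The same argument applied to $-Y$ and a union bound give the two-sided statement. The explicit second bound follows from the elementary inequality $h_1^{-1}(y)\le \sqrt{2y}+y$, which is a consequence of $h_1(u)\ge u^2/(2(1+u))$.
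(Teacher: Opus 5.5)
Your Steps 1 and 2 follow the paper's proof. The Fan--Grama--Liu inequality $\exp(\lambda\xi-\psi(\lambda)\xi^2)\le 1+\lambda\xi$, with predictable centring at the running mean, gives a supermartingale for every ordering; this uses independence. Jensen over all $n!$ orderings then removes the order dependence. Your per-step identity $\frac{1}{n!}\sum_\sigma (Y_{\sigma(k)}-\bar Y_{k-1})^2=\frac{k}{k-1}\hat V/L^2$ is correct. It is an equivalent way of computing what the paper obtains from its permutation-average lemma applied to $A^TDA$.

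The genuine gap is in Step 3. In the base case $L=1$, let $g(\lambda,v):=\frac{(1+\epsilon_{n-1})\psi(\lambda)v}{\lambda}+\frac{\log(2/\delta)}{(n-1)\lambda}$. For each \emph{fixed} $\lambda\in(0,1)$, Markov gives $\PR\bigl(\frac1n\sum_i (X_i-\mu_i)\ge g(\lambda,\hat V)\bigr)\le\delta/2$. The theorem's bound, however, is $\inf_\lambda g(\lambda,\hat V)$. The event you must control is therefore the union over all $\lambda$ of the fixed-$\lambda$ events, which requires $\E\sup_\lambda \exp(\lambda S-\psi(\lambda)V)\le 1$. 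The supermartingale only gives $\sup_\lambda\E\exp(\lambda S-\psi(\lambda)V)\le1$. Your sentence ``legitimate because the supermartingale bound holds for each fixed $\lambda$'' is exactly this invalid inference, and ``minimizing over $\lambda$ on the event'' repeats it.

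The implication genuinely fails in general. For a Brownian motion $W$, let $\tau=\inf\{t\ge 1: W_t\ge x\sqrt t\}\wedge T$. Then $\E\exp(\lambda W_\tau-\lambda^2\tau/2)=1$ for every $\lambda$. Yet $\sup_\lambda(\lambda W_\tau-\lambda^2\tau/2)\ge x^2/2$ with probability tending to $1$ as $T\to\infty$, whereas the ``optimized Chernoff'' reasoning would claim probability at most $e^{-x^2/2}$.

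Be aware that the paper's proof makes the same move: its $\lambda^*$ and its $\eta$ are functions of $\hat V$, chosen after the fixed-$\lambda$ bound is established. Following the paper therefore does not close this gap. A rigorous version needs an argument that is uniform in $\lambda$; each option below gives a valid inequality, but with altered constants or extra logarithmic terms:
\begin{itemize}
\item a union bound over a finite grid of $K$ values of $\lambda$, which turns $\log(2/\delta)$ into $\log(2K/\delta)$ and adds a discretization loss;
\item the method of mixtures: integrate $\exp(\lambda S-\psi(\lambda)V)$ against a prior on $\lambda$ (Fubini keeps the expectation at most one), then lower-bound the integral;
\item stitching over scales of $\hat V$;
\item a deterministic choice of $\lambda$.
\end{itemize}
As written, neither your outline nor the paper's argument proves the bound with exactly the stated constants.

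Two smaller points. First, of your two options for $k=1$, only ``start at $k=2$'' (the paper's $\lambda_1=0$) gives the stated form. Centring the first term at a fixed point leaves an extra $\psi(\lambda)\frac1n\sum_iY_i^2$ in the averaged quadratic form, so your Step 2 display holds only for the sum over $k\ge 2$. With $\lambda_1=0$, the linear term is no longer order-invariant: it averages to $\frac{n-1}{n}\lambda\sum_i(Y_i-\mu_i')$, and this is where the $n-1$ in both denominators comes from. Second, $h_1(u)\ge u^2/(2(1+u))$ only yields $h_1^{-1}(y)\le y+\sqrt{y^2+2y}$, which exceeds $y+\sqrt{2y}$. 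The inequality you need is nevertheless true. Setting $s=\sqrt{2y}$, the claim $h_1(y+\sqrt{2y})\ge y$ is equivalent to $e^{s}\ge 1+s+s^2/2$. Alternatively, use $\psi(\lambda)\le\lambda^2/(2(1-\lambda))$ as the paper does.
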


\subsection{Hoeffding Confidence Intervals}
In a Bernoulli trial, we can directly apply Hoeffding's inequality to the $m$-centred estimator to obtain a confidence interval for the SATE.

\begin{proposition}\label{prop:bernoulli-hoeffding}
    Suppose $Z\sim \bern(\pi_1,\ldots,\pi_n)$, define $B:=\max\{\lvert a-m\rvert, \lvert b- m \rvert\}$ and let $\alpha \in (0,1)$. Then, a $1-\alpha$ confidence interval for $\tau_n$ is given by
    \begin{equation*}
        \left[\tauh_n^m \pm B \sqrt{\frac{1}{n}\sum_{i=1}^n\frac{1}{\pi_i^2(1-\pi_i)^2}}\, \sqrt{\frac{\log(2/\alpha)}{2n}}\right].
    \end{equation*}
    Among the class of $m$-centred estimators, $\tauhm_n$ has the shortest Hoeffding confidence interval, which is given by
    \begin{equation*}
        \left[\tauhm_n \pm \frac{b-a}{2} \sqrt{\frac{1}{n}\sum_{i=1}^n\frac{1}{\pi_i^2(1-\pi_i)^2}}\, \sqrt{\frac{\log(2/\alpha)}{2n}}\right].
    \end{equation*}
\end{proposition}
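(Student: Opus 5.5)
The plan is to apply Theorem~\ref{thm:hoeffding} directly to the summands $X_i := \tauh^m_{n,i}$. In a Bernoulli trial these are jointly independent, since each depends only on $Z_i$. By Assumption~\ref{ass:rand-exp} we have $\E[Z_i]=\pi_i \in (0,1)$, so
\[
\E[\tauh^m_{n,i}] = \big(Y_i(1)-m\big) - \big(Y_i(0)-m\big) = Y_i(1)-Y_i(0) =: \mu_i .
\]
Hence $\frac{1}{n}\sum_i \mu_i = \tau_n$, and the deviation bound of Theorem~\ref{thm:hoeffding} with $\delta=\alpha$ inverts immediately into a two-sided interval around $\tauh^m_n$.

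The only real work is to determine the range of each $X_i$. Since $Z_i\in\{0,1\}$, $X_i$ takes exactly two values: $(Y_i(1)-m)/\pi_i$ and $-(Y_i(0)-m)/(1-\pi_i)$. These values lie in $[-B/\pi_i, B/\pi_i]$ and $[-B/(1-\pi_i), B/(1-\pi_i)]$ respectively, because $|Y_i(z)-m|\le B$ by Assumption~\ref{ass:bounded}. A naive common interval would give a range of length $2B\max\{1/\pi_i,1/(1-\pi_i)\}$, which does not match the claimed constant. So I would instead use the sharp range: the length of the support of a two-point variable is the distance between its two values,
\[
u_i-l_i = \left|\frac{Y_i(1)-m}{\pi_i} + \frac{Y_i(0)-m}{1-\pi_i}\right| \le B\Big(\frac{1}{\pi_i}+\frac{1}{1-\pi_i}\Big) = \frac{B}{\pi_i(1-\pi_i)} .
\]
Here I take $l_i,u_i$ to be the two attained values (ordered), which is legitimate because the $Y$'s are fixed in randomization inference. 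Then $\sum_i (u_i-l_i)^2 \le B^2\sum_i \pi_i^{-2}(1-\pi_i)^{-2}$. Substituting into Theorem~\ref{thm:hoeffding} yields the first interval. If one prefers the bounds $l_i,u_i$ not to depend on unknown outcomes, this does not affect validity: Hoeffding's statement only needs some valid range, and the bound above is a valid, outcome-free upper bound on its length. I expect this careful range computation to be the main point to get right.

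For the optimality statement, the half-width is proportional to $B(m)=\max\{|a-m|,|b-m|\}$, with all other factors independent of $m$. This quantity is a convex, piecewise-linear function of $m$, minimized uniquely at the midpoint $m=(a+b)/2$, where it equals $(b-a)/2$. Indeed, for any $m$ one has $\max\{|a-m|,|b-m|\}\ge \frac{1}{2}(|m-a|+|b-m|)\ge (b-a)/2$. Plugging this into the first interval gives the second display for $\tauhm_n$.
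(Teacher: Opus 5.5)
Your proposal is correct and follows the paper's proof essentially step for step: apply Theorem~\ref{thm:hoeffding} to the independent two-point summands $\tauh^m_{n,i}$, bound the distance between their two values by $B/(\pi_i(1-\pi_i))$, and minimize $B$ at $m=(a+b)/2$. Your remark that the outcome-dependent endpoints are deterministic under randomization inference, so that replacing $\sum_i(u_i-l_i)^2$ by an outcome-free upper bound preserves coverage, makes explicit a point the paper leaves implicit.
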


\begin{proof} The $m$-centred estimator is the average of the independent random variables $\tauh^m_{i,n}$
which can assume two values each. Hence, the length of the support of $\tauh^m_{i,n}$ is given by
\begin{equation*}
    u_i-l_i
        = \left\vert\frac{Y_i(1)-m}{\pi_i}+\frac{Y_i(0)-m}{1-\pi_i} \right\vert
        \leq \frac{\max\{\lvert a -m \rvert, \lvert b-m\rvert\}}{\pi_i(1-\pi_i)}.
\end{equation*}
    Applying Theorem~\ref{thm:hoeffding} yields the first statement of the theorem. Lastly, we see that the choice $m=\frac{a+b}{2}$ minimizes the constant $B$ and thus the length of the confidence interval.
\end{proof}

Proposition~\ref{prop:bernoulli-hoeffding} shows that the Hoeffding confidence interval around the classical HT estimator is not location invariant. That is, its length depends on $B=\max\{\lvert a\rvert, \lvert b\rvert\}$ which may be quite large even if the interval $[a,b]$ itself is short. Centring the potential outcomes is an easy solution to this problem and the midpoint-differenced estimator indeed has the shortest confidence interval.

In the special case of equal treatment assignment probabilities for all units,
\citet{sandoval_nonasymptotic_2026} have recently introduced a new confidence interval based on sub-Bernoulli concentration. They show that the length of their interval scales as $\mathcal{O}(1/\sqrt{\pi n})$ as $\pi \to 0$. This improves upon the $\mathcal{O}(1/\sqrt{\pi^2 n})$-rate of the Hoeffding confidence interval. One can generalize their result to $m$-centred estimators and improve the constants; here, we state an accordingly modified version of their theorem~3.5.

\begin{proposition}\label{prop:bern-sandoval}
    Suppose $Z\sim \bern(\pi_1,\ldots,\pi_n)$ and the treatment assignment probabilities are equal, i.e.\ $\pi:=\pi_1=\ldots=\pi_n$. Let $\alpha \in (0,1)$. Then, a $1-\alpha$ confidence interval for $\tau_n$ is given by
    \begin{equation*}
        \left[\tauh_n^m \pm  \frac{\log(2/\alpha)+n\gamma}{n\lambda}\right],
    \end{equation*}
    where $\gamma$ and $\lambda$ are defined as
    \begin{gather*}
        \gamma := \log\left(\frac{u}{u-l}e^{\lambda l} - \frac{l}{u-l}e^{\lambda u}\right),\qquad
        \lambda := \sqrt{\frac{2\log(2/\alpha)}{n(-l)u}},
    \end{gather*}
    with $l:=\min\{(a-m)/\pi, (m-b)/(1-\pi)\}$ and $u:= \max\{(b-m)/\pi,(m-a)/(1-\pi)\}$.
    
\end{proposition}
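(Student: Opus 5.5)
The plan is a Chernoff bound with a hand-picked $\lambda$, combined with Hoeffding's lemma in its exact (Bernoulli-extremal) form for variables that have mean zero and take values in $[l,u]$.

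\textbf{Centred summands.} Define $X_i := \tauh^m_{n,i} - (Y_i(1)-Y_i(0))$. These are independent with mean zero. Each $X_i$ takes only two values:
\begin{itemize}
\item on $\{Z_i=1\}$, $X_i = (Y_i(1)-m)(1/\pi-1) + (Y_i(0)-m)$;
\item on $\{Z_i=0\}$, $X_i = -(Y_i(1)-m) - (Y_i(0)-m)\pi/(1-\pi)$.
\end{itemize}
Because $\pi_i=\pi$ for all $i$, both values should lie in the common interval $[l,u]$ given in the statement. This step needs some care: the natural bounds come out as combinations like $(1-\pi)(b-m)/\pi + (b-m)$. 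If the containment fails literally, I would instead use the tight two-point description $X_i\in\{(1-\pi)d_i/\pi,\,-d_i\}$, where $d_i := (Y_i(1)-m)+\pi(Y_i(0)-m)/(1-\pi)$ up to scaling. I would then check that the extreme values fit inside $[l,u]$, so that $l\le 0\le u$.

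\textbf{MGF bound.} Let $X$ have mean zero and take values in $[l,u]$. Convexity of $x\mapsto e^{\lambda x}$ gives
\begin{equation*}
e^{\lambda x}\le \frac{u-x}{u-l}e^{\lambda l}+\frac{x-l}{u-l}e^{\lambda u}.
\end{equation*}
Taking expectations and using $\E X=0$ yields $\E e^{\lambda X}\le e^{\gamma}$, with $\gamma$ exactly as defined in the statement.

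\textbf{Chernoff step.} By independence,
\begin{equation*}
\PR\Big(\sum_i X_i\ge t\Big)\le \exp(n\gamma-\lambda t).
\end{equation*}
Setting $t=(\log(2/\alpha)+n\gamma)/\lambda$ makes this bound equal to $\alpha/2$. Now apply the same argument to $-X_i$, which takes values in $[-u,-l]$. The convexity bound for $-X_i$ gives the same $\gamma$, because the two-point extremal distribution with mean zero on $\{l,u\}$ has
\begin{equation*}
\E e^{-\lambda X}=\frac{u}{u-l}e^{-\lambda l}-\frac{l}{u-l}e^{-\lambda u}.
\end{equation*}
This is not symmetric in general. So I would either redefine the lower tail with $(-u,-l)$, or observe that $\gamma$ is invariant under this swap. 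If it is not invariant, I would bound the lower tail via the larger of the two. A union bound over the two tails then gives coverage $1-\alpha$ for $\tauh^m_n\pm t/n$. This uses $\sum_i X_i = n(\tauh^m_n-\tau_n)$ and dividing by $n$; the displayed half-width already has the $n$ in the denominator.

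\textbf{Choice of $\lambda$ and main obstacle.} Any fixed $\lambda>0$ gives a valid bound, so the stated $\lambda$ only needs to be deterministic. It is: $\lambda$ depends only on $n$, $\alpha$, $a$, $b$, $m$ and $\pi$. It comes from the sub-Gaussian approximation $\gamma\approx \lambda^2(-l)u/2$. The main obstacle is the two-sidedness, meaning the asymmetry of $\gamma$ under $(l,u)\mapsto(-u,-l)$. I would first check whether the definitions of $l$ and $u$ were chosen symmetric enough that the $\gamma$ for the lower tail does not exceed the stated one. Failing that, I would prove the needed upper bound by comparing the two Bernoulli log-MGFs.
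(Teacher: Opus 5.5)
You were right to flag the lower tail as the main obstacle. It is a genuine gap, and in general it cannot be closed: the stated interval can undercover. Everything else in your plan is correct and follows the paper's route. The paper uses the same $X_i$, supplies the bounds $l\le X_i\le u$, and defers the Chernoff step with the two-point MGF bound to Sandoval et al. The containment you were unsure about holds. Set $c_i:=(1-\pi)(Y_i(1)-m)+\pi(Y_i(0)-m)$, which is a convex combination of two points of $[a-m,b-m]$. Your two values are then $c_i/\pi$ and $-c_i/(1-\pi)$, so $X_i\in[l,u]$ exactly; your ``natural bound'' $(1-\pi)(b-m)/\pi+(b-m)$ is just $(b-m)/\pi$. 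Convexity then gives $\E e^{\lambda X_i}\le e^{\gamma}$, so the upper tail has probability at most $\alpha/2$.

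None of your proposed fixes for the lower tail delivers the stated interval. Applying the argument to $-X_i\in[-u,-l]$ leaves $\lambda$ unchanged, since it depends only on $(-l)u$. It does, however, replace $\gamma$ by $\gamma^-:=\log\bigl(\tfrac{u}{u-l}e^{-\lambda l}-\tfrac{l}{u-l}e^{-\lambda u}\bigr)$, and
\[
e^{\gamma}-e^{\gamma^-}=\frac{2u(-l)}{u-l}\left(\frac{\sinh(\lambda u)}{u}-\frac{\sinh(-\lambda l)}{-l}\right).
\]
Because $x\mapsto\sinh(\lambda x)/x$ is increasing on $(0,\infty)$, this has the sign of $u+l$. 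So $\gamma$ is not swap-invariant, and comparing the two log-MGFs gives $\gamma^-\le\gamma$ only when $u\ge -l$. For $\pi<1/2$ this condition is exactly $m\le(a+b)/2$, and it holds with equality for $\tauhm_n$.

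When $u<-l$ the statement itself fails. Take $[a,b]=[0,1]$, $m=1$, all potential outcomes equal to $0$ (so $\tau_n=0$), $n=10$ and $\pi=\alpha=0.05$. Then:
\begin{itemize}
\item $l=-20$ and $u=1/0.95$, and each $X_i$ is exactly the two-point law on $\{l,u\}$;
\item $\tauh^m_n=(1-2S)/0.95$ with $S=\sum_iZ_i\sim\mathrm{Bin}(10,0.05)$;
\item $\gamma\approx0.15$ whereas $\gamma^-\approx1.06$, and the stated half-width is about $2.75$.
\end{itemize}
The interval therefore misses $\tau_n$ whenever $S\ge2$, an event of probability about $0.086>\alpha$. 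The paper's proof only adds the bounds on $X_i$ to Sandoval et al.'s steps and does not address this asymmetry either.

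Your argument does prove a corrected statement. One version replaces $\gamma$ by $\max\{\gamma,\gamma^-\}$, which is your ``larger of the two'' fallback. The other is the asymmetric interval $\bigl[\tauh^m_n-\tfrac{\log(2/\alpha)+n\gamma}{n\lambda},\ \tauh^m_n+\tfrac{\log(2/\alpha)+n\gamma^-}{n\lambda}\bigr]$. Both reduce to the stated interval when $u\ge-l$.
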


\begin{proof}
    In the proof of their theorem~3.5, \citet{sandoval_nonasymptotic_2026} consider the difference of an ($m$-centred) estimator and the SATE:
    \begin{align*}
        \tauh_n^m-\tau_n &= \frac{1}{n} \sum_{i=1}^n \left(\frac{Z_i}{\pi_i}-1\right)(Y_i(1)-m) - \left(\frac{1-Z_i}{1-\pi_i}-1\right)(Y_i(0)-m)\\
        &= \frac{1}{n} \sum_{i=1}^n \frac{(1-\pi_i)Y_i(1)+\pi_i Y_i(0) -m}{\pi_i(1-\pi_i)} (Z_i-\pi_i) =: \frac{1}{n} \sum_{i=1}^n X_i.
    \end{align*}
    We see that the random variables $X_i$ are upper- and lower-bounded as follows
    \begin{equation*}
        \min\left\{\frac{a-m}{\pi},\frac{m-b}{1-\pi}\right\}\leq X_i \leq \max\left\{\frac{b-m}{\pi},\frac{m-a}{1-\pi}\right\}.
    \end{equation*}
    We define $l$ as the minimum and $u$ as the maximum.\footnote{\citeauthor{sandoval_nonasymptotic_2026} use the notation $a$ and $b$ instead.} The result now follows from the same steps in their proof with the improved bounds on the $X_i$.
\end{proof}

In Figure~\ref{fig:bern-hoeffding}, we compare the length of the Hoeffding interval from Proposition~\ref{prop:bernoulli-hoeffding} with the confidence intervals proposed by \citet{sandoval_nonasymptotic_2026} from Proposition~\ref{prop:bern-sandoval}. We notice that the choice of the hyper-parameter for the latter intervals is more involved as it should also depend on the value $\pi$; by contrast, when we employ Hoeffding confidence intervals the midpoint-differenced estimator achieves the smallest interval length regardless of the treatment assignment probability. Moreover, we see that in the limit $\pi \to 0$ \citeauthor{sandoval_nonasymptotic_2026}'s intervals are shorter due to the improved scaling behaviour. For moderately large $\pi$, however, Hoeffding's inequality still yields tighter confidence intervals.

\begin{figure}[htbp]
    \centering
    \includegraphics[scale=0.6]{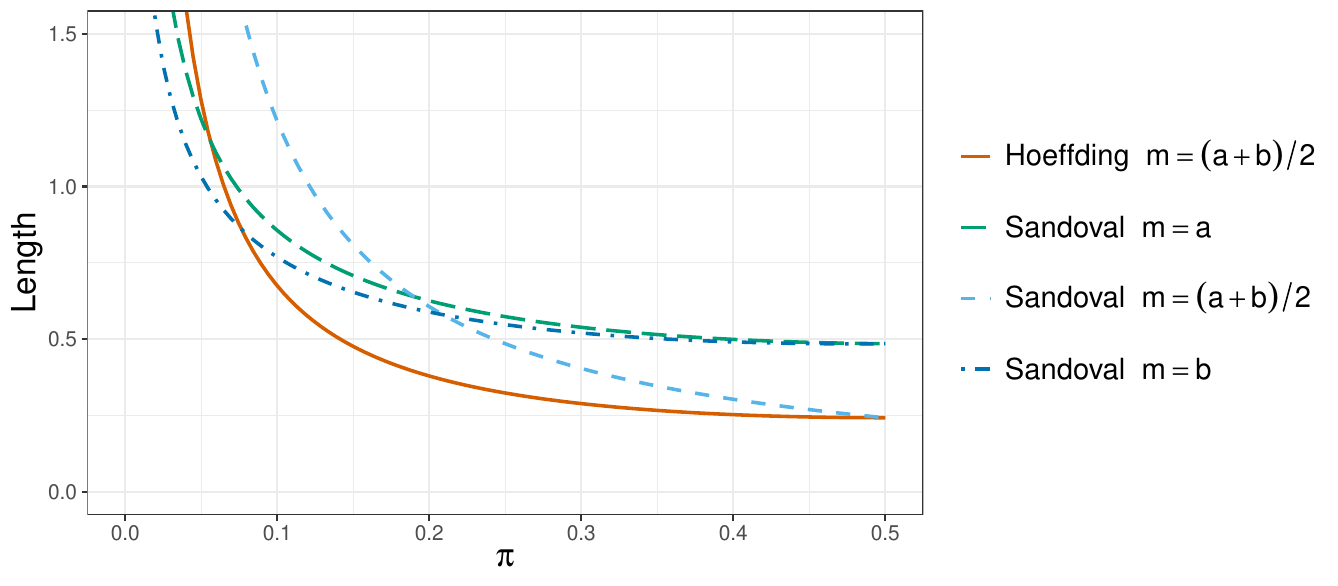}
    \caption{Length of Hoeffding-type confidence intervals in a Bernoulli trial as a function of $\pi$. We set $\alpha = 0.05, n=500$ and $[a,b] = [0,1]$ and only display values of $\pi$ smaller than 0.5 due to symmetry.}
    \label{fig:bern-hoeffding}
\end{figure}

\subsection{Bernstein-type Intervals} Next, we derive confidence intervals that incorporate an empirical estimate of the variance. To this end, we can apply Theorems~\ref{thm:mp} and~\ref{thm:mine-bernstein}. Here, we only use our sharpened concentration inequality and state the result for the midpoint-differenced estimator. The proposition for general $m$-centred estimators, the confidence interval based on \citet{maurer_empirical_2009}'s concentration inequality as well as the pertaining proof can be found in Appendix~\ref{app:bernoulli-general-bernstein}.

\begin{proposition}\label{prop:confint-bernstein-indep}
    Suppose $Z\sim \bern(\pi_1,\ldots,\pi_n)$, $n\geq 2$ and let $\alpha \in (0,1)$. Define $\piu := \min_{1 \leq i \leq n} \{\pi_i, 1-\pi_i\}$, $\vh := \frac{1}{n-1}\sum_{i=1}^n (\tauhm_{i,n}-\tauhm)^2$ and $\epsilon_n := \frac{1}{n} \sum_{j=1}^{n} \frac{1}{j}$. Then, an explicit $1-\alpha$ confidence interval for~$\tau_n$ is given by
    \begin{gather*}
        \left[\tauhm_n \pm \left(\sqrt{\frac{2(1+\epsilon_{n-1})\vh\log(2/\alpha)}{n-1}} + \frac{b-a}{\piu}\frac{\log(2/\alpha)}{n-1}\right)\right].
    \end{gather*}
    The tighter implicit $1-\alpha$ confidence interval takes the form
    \begin{equation*}
        \left[\tauhm_n \pm (1+\epsilon_{n-1})\frac{\piu}{b-a} \vh\, h_1^{-1}\!\left(\frac{(b-a)^2\log(2/\alpha)}{\piu^2(1+\epsilon_{n-1})(n-1)\vh}\right)\right].
    \end{equation*}
\end{proposition}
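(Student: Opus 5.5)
The plan is to apply Theorem~\ref{thm:mine-bernstein} directly to the summands $X_i := \tauhm_{i,n}$. Under $Z\sim\bern(\pi_1,\ldots,\pi_n)$ these are jointly independent, since each $X_i$ is a function of $Z_i$ alone. Their means are $\mu_i = \E[\tauhm_{i,n}] = Y_i(1)-Y_i(0)$, so $\frac1n\sum_i (X_i-\mu_i) = \tauhm_n - \tau_n$. Once the hypotheses of Theorem~\ref{thm:mine-bernstein} are checked with suitable supports $[l_i,u_i]$, inverting the high-probability bound with $\delta=\alpha$ gives both intervals.

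The key choice is the interval $[l_i,u_i]$. Theorem~\ref{thm:mine-bernstein} uses the centred variance $\vh$ with centres $c_i = (l_i+u_i)/2$. The natural support, namely the two attainable values of $X_i$, has a midpoint that depends on both potential outcomes, so it is unobservable. I would therefore pick \emph{symmetric} supports with $c_i = 0$ for all $i$. Write $m=(a+b)/2$. The two values of $X_i$ are $(Y_i(1)-m)/\pi_i$ and $-(Y_i(0)-m)/(1-\pi_i)$. By Assumption~\ref{ass:bounded}, $\lvert Y_i(z)-m\rvert\le (b-a)/2$, so both values lie in
\[
[l_i,u_i] := \left[-\tfrac{b-a}{2\piu},\, \tfrac{b-a}{2\piu}\right].
\]
Here $\piu \le \min\{\pi_i,1-\pi_i\}$ by definition. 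This gives $c_i=0$ and $L = \max_i(u_i-l_i) = (b-a)/\piu$.

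With $c_i = 0$, the centred variance of Theorem~\ref{thm:mine-bernstein} becomes $\frac{1}{n(n-1)}\sum_{i<j}(X_i-X_j)^2$. By the standard identity $\sum_{i<j}(X_i-X_j)^2 = n\sum_i (X_i-\bar X)^2$, this equals $\frac{1}{n-1}\sum_i(\tauhm_{i,n}-\tauhm_n)^2$, which is exactly the $\vh$ of the proposition. Substituting $L=(b-a)/\piu$ and $\delta=\alpha$ into the two bounds of Theorem~\ref{thm:mine-bernstein} then reproduces the two displayed half-widths verbatim: $L/\vh$ in the prefactor becomes $\piu\vh/(b-a)$, and $L^2$ becomes $(b-a)^2/\piu^2$. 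The event $\lvert\tauhm_n-\tau_n\rvert\le$ half-width has probability at least $1-\alpha$, which is precisely coverage of the stated intervals.

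The only genuine obstacle is the unknown-centre issue addressed in the second paragraph. It is the reason for choosing supports centred at the known constant $m$, at the cost of the slightly loose $L$ involving $\piu$ rather than $\pi_i(1-\pi_i)$. One minor point remains for the implicit bound: when $\vh=0$ the expression should be read as its limit, $\vh\, h_1^{-1}(c/\vh)\to$ the explicit linear term. This is inherited from the convention in Theorem~\ref{thm:mine-bernstein} and needs no new argument. The requirement $n\ge 2$ is needed only so that $\vh$ and $\epsilon_{n-1}$ are defined.
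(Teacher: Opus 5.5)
Your proposal is correct and follows essentially the paper's proof. You apply Theorem~\ref{thm:mine-bernstein} to the independent summands $\tauhm_{i,n}$ using a support whose midpoint is known and equal to $0$, so the centred variance reduces to the stated $\vh$ and $L=(b-a)/\piu$. The only cosmetic difference is that the paper obtains the result as the case $m=(a+b)/2$ of a general $m$-centred proposition in the appendix, using unit-specific supports $\bigl[-\tfrac{b-a}{2\min\{\pi_i,1-\pi_i\}},\tfrac{b-a}{2\min\{\pi_i,1-\pi_i\}}\bigr]$ instead of your common interval; these also have $c_i=0$ and the same $L$, so the bounds are identical.
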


The first confidence interval in Proposition~\ref{prop:confint-bernstein-indep} can be easily compared to the standard Wald confidence interval based on a super-population central limit theorem: $[\tauhm \pm \Phi^{-1}(1-\alpha/2) \,\vh^{1/2}/\sqrt{n}\,]$. Note that the latter interval is derived under the random sampling paradigm and only has asymptotic coverage. For large $n$, the ratio of the lengths of the two intervals is approximately equal to
\begin{equation*}
    \frac{\sqrt{2\log(2/\alpha)}}{\Phi^{-1}(1-\alpha/2)}.
\end{equation*}
For $\alpha=0.05$, this expression evaluates to $\approx 1.39$. That is, compared to the standard Wald interval, our Bernstein-type interval is approximately 39\% wider, which improves upon the interval based on \citeauthor{maurer_empirical_2009}'s inequality which achieves 46\%. 

In Figure~\ref{fig:bern-bernstein}, we display the length of different confidence intervals as a function of the sample size $n$. We observe that Bernstein-type intervals can be substantially shorter than the Hoeffding interval, even when the outcomes are only moderately concentrated. Furthermore, for small $n$, the lower order $\mathcal{O}(n^{-1})$-term in the Bernstein-type intervals is palpable which explains the larger gap between \citeauthor{maurer_empirical_2009}'s confidence interval and ours. Lastly, the length of our explicit and implicit confidence are quite similar and asymptotically the same.

\begin{figure}[htbp]
    \centering
    \includegraphics[scale=0.6]{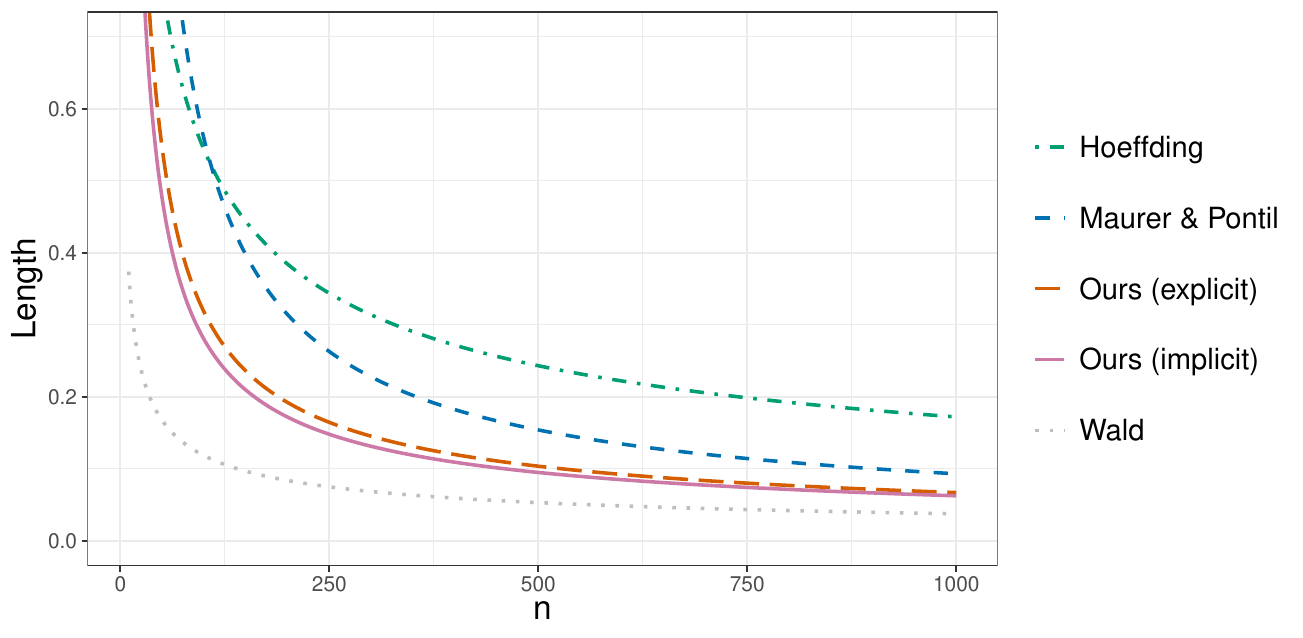}
    \caption{Length of confidence intervals in a Bernoulli trial as a function of $n$. We set $\alpha = 0.05, \pi=1/2$ and $[a,b] = [0,1]$. Moreover, we use $\vh = \sigma^2_{5,5}/(\pi(1-\pi))$, where $\sigma^2_{5,5}$ is the variance of a $\mathrm{Beta}(5,5)$-distribution.}
    \label{fig:bern-bernstein}
\end{figure}

\subsection{Matched-Pair Designs}\label{sec:matched-pairs} In order to increase covariate balance between the two treatment groups, oftentimes a matching procedure is used to find pairs of units with similar features and then randomize one unit to treatment and the other to control. In this subsection, we assume that the number of units $n$ is even and that, without loss of generality, the units are ordered with respect to the matching, i.e.\ matched units are indexed by one of the pairs $(2i-1, 2i)$, where $i\in \{1,\ldots,n/2\}$. The treatment assignment mechanism is denoted $Z \sim \mathrm{MP}(n)$ and can be formalized as follows: For independent $Z^*_1,\ldots,Z^*_{n/2} \sim \bern(0.5)$, we set $Z_{2i-1} = Z^*_i$ and $Z_{2i} = 1-Z^*_i$ for all $i\in\{1, \ldots, n/2\}$.

Since the matching procedure is only based on the covariate information, we do not need to account for it in design-based inference. Hence, practitioners can freely choose their preferred algorithm and the ($m$-centred) HT estimator remains unbiased. To derive confidence intervals, it is convenient to re-formulate it in terms of the $Z^*_i$ variables:
\begin{align*}
    \tauh^m_n &= \frac{1}{n} \sum_{i=1}^n \big(Y_i(1)-m\big)\, \frac{Z_i}{\pi_i}-\big(Y_i(0)-m\big)\, \frac{1-Z_i}{1-\pi_i}\\
    &= \frac{1}{n} \sum_{i=1}^{n/2} \big(Y_{2i-1}(1)-Y_{2i}(0)\big) \frac{Z_{2i-1}}{\pi_{2i-1}} - \big(Y_{2i-1}(0)-Y_{2i}(1)\big)\frac{1-Z_{2i-1}}{1-\pi_{2i-1}}\\
    &= \frac{1}{n/2} \sum_{i=1}^{n/2} \yt_i(1) \frac{Z^*_i}{0.5}-\yt_i(0)\frac{1-Z^*_{i}}{1-0.5} =: \frac{1}{n/2}\sum_{i=1}^{n/2} \tauh^*_{i,n},
\end{align*}
where the pseudo-potential outcomes $\yt_i(0)$ and $\yt_i(1)$ are defined as
\begin{equation*}
    Y^*_i(0) := \frac{1}{2} \big(Y_{2i-1}(0)-Y_{2i}(1)\big),\qquad
    Y^*_i(1) := \frac{1}{2}\big(Y_{2i-1}(1)-Y_{2i}(0)\big),
\end{equation*}
for $i \in\{1,\ldots,n/2\}$. All of them are contained in the interval $[-\frac{b-a}{2}, \frac{b-a}{2}]$; consequently, for any $m$, $\tauh^m_n$ is the midpoint-differenced estimator for the pseudo-outcomes and we can drop the superscript $m$. We can now directly apply Propositions~\ref{prop:bernoulli-hoeffding} and~\ref{prop:confint-bernstein-indep} to obtain confidence intervals. For brevity, we only state the Hoeffding and our explicit Bernstein interval here. 
\begin{proposition}
    Suppose $Z \sim \mathrm{MP}(n)$, $n\geq 4$ and let $\alpha \in (0,1)$. Then, a $1-\alpha$ Hoeffding confidence interval for $\tau_n$ is given by
    \begin{equation*}
        \left[\tauh_n \pm 2 (b-a)\sqrt{\frac{\log(2/\alpha)}{n}}\right].
    \end{equation*}
    Let $\hat{V}^*:= \frac{1}{n/2-1}\sum_{i=1}^{n/2} (\hat{\tau}^*_{i,n}-\tauh_n)^2$ and $\epsilon_n := \frac{1}{n} \sum_{j=1}^{n} \frac{1}{j}$. An explicit $1-\alpha$ Bernstein-type confidence interval is given by
    \begin{equation*}
        \left[\tauh_n \pm \left(\sqrt{\frac{2(1+\epsilon_{n/2-1})\hat{V}^*\log(2/\alpha)}{n/2-1}} + 2(b-a)\frac{\log(2/\alpha)}{n/2-1}\right)\right].
    \end{equation*}
\end{proposition}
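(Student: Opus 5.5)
We reduce to the Bernoulli results via the reformulation displayed just before the statement. The plan has three steps: verify the reduction, apply Proposition~\ref{prop:bernoulli-hoeffding}, then apply the explicit interval of Proposition~\ref{prop:confint-bernstein-indep}.

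\textbf{Reduction.} Under $Z\sim\mathrm{MP}(n)$ we have $\pi_i = 1/2$ for every unit, and $Z^*_1,\ldots,Z^*_{n/2}$ are i.i.d.\ $\bern(0.5)$. The displayed computation writes $\tauh_n$ as the $m$-centred estimator, with $m=0$, of a Bernoulli trial with $n' := n/2$ units. The pseudo-outcomes are $\yt_i(0),\yt_i(1)$, which lie in $[a',b'] := [-\tfrac{b-a}{2},\tfrac{b-a}{2}]$, and all assignment probabilities equal $1/2$. Since $(a'+b')/2 = 0$, this estimator is exactly the midpoint-differenced estimator for the pseudo-problem.

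We must also check that its estimand is $\tau_n$. The pseudo-SATE is
\[
\frac{1}{n/2}\sum_{i=1}^{n/2}\bigl(\yt_i(1)-\yt_i(0)\bigr) = \frac{1}{n}\sum_{i=1}^{n/2}\bigl(Y_{2i-1}(1)-Y_{2i}(0)-Y_{2i-1}(0)+Y_{2i}(1)\bigr) = \tau_n .
\]
This identity, together with the summand-by-summand regrouping in the display, is the only genuinely new verification. The regrouping is legitimate because $Z_{2i}=1-Z_{2i-1}$ and all $\pi_i=1/2$, so $1-Z_{2i}=Z_{2i-1}$ and the $m$ terms cancel. The pseudo-outcomes are also fixed, non-random quantities, as randomization inference requires.

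\textbf{Hoeffding interval.} We apply Proposition~\ref{prop:bernoulli-hoeffding} with $n'$ units, $\pi_i=1/2$, and range $b'-a' = b-a$. Then $\frac{1}{\pi_i^2(1-\pi_i)^2}=16$, so the half-width is
\[
\frac{b-a}{2}\cdot 4\cdot\sqrt{\frac{\log(2/\alpha)}{2\cdot n/2}} = 2(b-a)\sqrt{\frac{\log(2/\alpha)}{n}},
\]
which matches the claim.

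\textbf{Bernstein interval.} We apply the explicit interval of Proposition~\ref{prop:confint-bernstein-indep} with $n'=n/2$ units. This needs $n'\ge 2$, which explains the assumption $n\ge 4$. Here $\piu = 1/2$ and $b'-a'=b-a$, so the second term becomes $2(b-a)\log(2/\alpha)/(n/2-1)$. The variance estimator $\vh$ of that proposition becomes $\hat V^*$, because the individual summands are the $\tauh^*_{i,n}$ and their mean is $\tauh_n$. The factor $\epsilon_{n-1}$ becomes $\epsilon_{n/2-1}$, giving the stated interval.

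The main obstacle is minor: making sure the individual terms fed to Theorem~\ref{thm:mine-bernstein} are the pair-level $\tauh^*_{i,n}$, so that the support-length bound $L=(b'-a')/(\piu^2)\cdot\piu = (b-a)/\piu$ is applied with $n/2$ rather than $n$. This is handled by the reduction step.
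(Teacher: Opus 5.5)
Your proposal is correct and follows the paper's own argument. The paper rewrites $\tauh_n$ via the pseudo-outcomes $\yt_i(0),\yt_i(1)\in[-\tfrac{b-a}{2},\tfrac{b-a}{2}]$ as the midpoint-differenced estimator of a Bernoulli$(1/2)$ trial on $n/2$ units, then substitutes $\pi_i=\piu=1/2$ and $n/2$ into Propositions~\ref{prop:bernoulli-hoeffding} and~\ref{prop:confint-bernstein-indep}, as you do. Your explicit checks that the pseudo-SATE equals $\tau_n$ and that $n\ge 4$ is exactly the $n/2\ge 2$ requirement fill in details the paper leaves implicit.
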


\section{Completely Randomized Assignment}\label{sec:cre} Next, we consider experiments that randomly assign a pre-specified number of $n_1$ units to receive treatment and $n_0 = n-n_1$ units to receive control. This design is known as the completely randomized experiment (CRE), its treatment assignment distribution is given by $\PR(Z=z) = 1\big/\binom{n}{n_1}$, where $\sum_{i=1}^n z_i = n_1$, and dubbed $Z \sim \cre(n_0,n_1)$. We note that the treatment assignment probability for every unit is $\pi = n_1/n$ and that all $m$-centred estimators are equal under this design. Hence, we drop the superscript $m$ and state the results for the HT estimator going forward.

The key complication in constructing confidence intervals is the dependence between the different treatment assignments $Z_i$; fewer concentration inequalities are known for this setting as result. Notably, \citet{serfling_probability_1974} derived inequalities for sampling without replacement which were later improved by \citet{bardenet_concentration_2015}, and \citet{barber_hoeffding_2024} derives concentration inequalities for weighted sums of exchangeable random variables. We can recast the HT estimator and our treatment assignment distribution in their respective settings. In the following, we derive Hoeffding confidence intervals, prove a novel concentration inequality for Neyman's variance estimator and construct the pertaining empirical Bernstein confidence intervals. Lastly, we extend our results to cluster and stratified randomized experiments.

\subsection{Hoeffding Confidence Intervals}\label{sec:cre-hoeffding} Due to the dependence between treatment assignments for different units, the standard Hoeffding inequality of Theorem~\ref{thm:hoeffding} does not apply. Instead, we use a concentration inequality for weighted sums of exchangeable random variables from \citet{barber_hoeffding_2024}, which is stated in a slightly generalized version in Appendix~\ref{app:extension-literature}.
Since the $Z_i$'s are indeed exchangeable, this result is applicable and we can derive a Barber-Hoeffding confidence interval for the SATE.


\begin{proposition}\label{prop:cre-hoeffding}
    Suppose $Z\sim \cre(n_0,n_1)$, define $\epsilon'_n := \frac{\epsilon_n-1}{n-\epsilon_n} = \mathcal{O}(\frac{\log(n)}{n})$, where $\epsilon_n = \sum_{j=1}^n \frac{1}{j}$ and let $\alpha \in (0,1)$. Then, a $1-\alpha$ confidence interval for $\tau_n$ is given by
    \begin{equation*}
        \left[\tauh_n \pm \frac{b-a}{2} \frac{1}{\pi(1-\pi)}\, \sqrt{\frac{(1+\epsilon_n')\log(2/\alpha)}{2n}}\right].
    \end{equation*}
\end{proposition}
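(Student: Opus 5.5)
The plan is to write $\tauh_n-\tau_n$ as a weighted sum of the exchangeable assignment indicators $Z_1,\ldots,Z_n$ with deterministic weights. Then we apply the Barber--Hoeffding inequality for weighted sums of exchangeable random variables (the generalized version stated in Appendix~\ref{app:extension-literature}) and invert it.

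\textbf{Step 1 (linear representation).} Write $\pi=n_1/n$ and use $1-Z_i$ in the control term. This gives
\begin{equation*}
    \tauh_n = \sum_{i=1}^n c_i Z_i - \frac{1}{n}\sum_{i=1}^n \frac{Y_i(0)}{1-\pi},
    \qquad
    c_i := \frac{1}{n}\left(\frac{Y_i(1)}{\pi}+\frac{Y_i(0)}{1-\pi}\right)
    = \frac{(1-\pi)Y_i(1)+\pi Y_i(0)}{n\pi(1-\pi)} .
\end{equation*}
Under $\cre(n_0,n_1)$ we have $\sum_i Z_i=n_1$ deterministically, so $\sum_i c_i Z_i - n_1\bar c = \sum_i (c_i-\bar c) Z_i$, where $\bar c$ is the average of the $c_i$. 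A short computation (unbiasedness, $\E[Z_i]=\pi$) shows that $n_1\bar c - \frac{1}{n}\sum_i Y_i(0)/(1-\pi)=\tau_n$. Hence
\begin{equation*}
    \tauh_n-\tau_n=\sum_{i=1}^n (c_i-\bar c)\, Z_i .
\end{equation*}
This is exactly the centred weighted sum controlled in Barber's theorem, with exchangeable variables $Z_i\in[0,1]$ (so range $1$) and fixed weights $c_i$.

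\textbf{Step 2 (apply the inequality).} The Barber--Hoeffding bound states that, with probability at least $1-\delta$ for each tail,
\begin{equation*}
    \Big\vert\sum_i (c_i-\bar c) Z_i\Big\vert \le \lVert c-\bar c\one\rVert_2 \sqrt{(1+\epsilon'_n)\log(1/\delta)/2}.
\end{equation*}
We use one-sided bounds with $\delta=\alpha/2$ on each side and combine them with a union bound, which produces the $\log(2/\alpha)$.

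\textbf{Step 3 (bound the weight norm).} Each convex combination $(1-\pi)Y_i(1)+\pi Y_i(0)$ lies in $[a,b]$. Hence all $c_i$ lie in an interval of length $(b-a)/(n\pi(1-\pi))$. By Popoviciu's inequality, the empirical variance of numbers in an interval of length $R$ is at most $R^2/4$. Therefore
\begin{equation*}
    \lVert c-\bar c\one\rVert_2^2\le n\,\frac{(b-a)^2}{4n^2\pi^2(1-\pi)^2}.
\end{equation*}
Substituting this into Step 2 gives a deviation bound of $\frac{b-a}{2}\frac{1}{\pi(1-\pi)}\sqrt{(1+\epsilon'_n)\log(2/\alpha)/(2n)}$. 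Inverting this bound yields the stated interval.

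\textbf{Main obstacle.} The delicate step is matching the exact form of the (generalized) Barber inequality in the appendix. We must check that it is stated with the variables in $[0,1]$ and deterministic weights, that it centres at $\bar c\sum_i Z_i$, and that its correction factor is precisely $1+\epsilon'_n$ with $\epsilon'_n=(\epsilon_n-1)/(n-\epsilon_n)$. If instead it is phrased for permutations of a fixed vector with weights $Z$, we would switch roles. However, that orientation yields the different bound $(b-a)/\sqrt{\pi(1-\pi)}$, so the assignment of roles above is the one that reproduces the claimed constant.
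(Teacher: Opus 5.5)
Your proof is correct and is essentially the paper's argument. You write $\hat\tau_n-\tau_n$ as a weighted sum of the exchangeable indicators, apply the generalized Barber--Hoeffding inequality with $N=n$ and range $1$, and bound the weight norm. Your mean-centred weights $c_i-\bar c$ combined with Popoviciu's inequality give exactly the bound the paper obtains from the shift $m=(a+b)/2$ and the pointwise estimate $\lvert (1-\pi)Y_i(1)+\pi Y_i(0)-m\rvert\le (b-a)/2$.

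One correction to your closing remark: the switched orientation is not an obstacle. Its constant $(b-a)/\sqrt{\pi(1-\pi)}$ never exceeds $(b-a)/(2\pi(1-\pi))$, because $\pi(1-\pi)\le 1/4$, so it would also prove the statement, and with a strictly shorter interval whenever $\pi\neq 1/2$.
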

\begin{proof}
    Like in the proof of Proposition~\ref{prop:bern-sandoval}, we can express the difference of an $m$-centred estimator and the SATE as follows
    \begin{equation*}
        \tauh_n^m-\tau_n = \sum_{i=1}^n \left(\frac{1}{n}\frac{(1-\pi)Y_i(1)+\pi Y_i(0) -m}{\pi(1-\pi)}\right) (Z_i-\pi) =: \sum_{i=1}^n w_i X_i.
    \end{equation*}
    All $m$-centred estimators are equal to the HT estimator $\tauh$, but inserting the additional hyper-parameter helps us to tighten the constant of the interval and make it scale-invariant. Since $X_i \in \{-\pi,1-\pi\}$, the $X_i$ are supported on an interval of length $1$; moreover, $\bar{X}=0$. The norm of the weights can be upper bounded like
    \begin{equation*}
        \lVert w \rVert_2^2 \leq \frac{1}{n^2} \frac{1}{\pi^2(1-\pi)^2} n B^2,
    \end{equation*}
    where $B=\max\{\lvert a-m\rvert, \lvert b- m \rvert\}$. Choosing $B = (a+b)/2$ minimizes the upper bound and we obtain, $\lVert w \rVert_2 \leq \frac{b-a}{2}\frac{1}{\pi(1-\pi)}\frac{1}{\sqrt{n}}$. The confidence interval now directly follows from applying Theorem~\ref{prop:rina-hoeffding} with $N=n$.
\end{proof}

\citet{sandoval_nonasymptotic_2026} have recently derived an alternative Hoeffding confidence interval whose length scales as $\mathcal{O}(1/\sqrt{\pi n})$ instead of $\mathcal{O}(1/\sqrt{\pi^2 n})$ as $\pi \to 0$. To achieve this, they introduce `mini-batch complete randomization' which is marginally equivalent to the complete randomization procedure but allows them to keep track of groups of units. To accommodate this additional group structure, they need to slightly modify their HT estimator. In the special case that $\pi = 1/K$ for an integer $K\geq 2$ (or equivalently $n_0 = K'n_1$ where {$K'\in\mathbb{N}$}), this modified version is equal to the standard HT estimator. For brevity and comparability with Proposition~\ref{prop:cre-hoeffding}, we only state the confidence interval in this situation; for more details, the reader is referred to \citeauthor{sandoval_nonasymptotic_2026}'s article.

\begin{proposition}[\cite{sandoval_nonasymptotic_2026}, Thm.~3.1]\label{prop:cre-sandoval}
    Suppose $Z\sim \cre(n_0,n_1)$ and $\pi = 1/K$ for an integer $K \geq 2$. Let $\alpha \in (0,1)$. Then, a $1-\alpha$ confidence interval for $\tau_n$ is given by
    \begin{equation*}\label{eq:cre-sandoval}
        \left[\tauh_n \pm \frac{b-a}{\sqrt{\pi}}\sqrt{\frac{2\log(2/\alpha)}{n}}\right].
    \end{equation*}
\end{proposition}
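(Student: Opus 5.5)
Since the statement is quoted from \citet{sandoval_nonasymptotic_2026}, the plan is to reconstruct their argument via mini-batch complete randomization. The aim is to reduce the CRE with $\pi=1/K$ to independent blocks, each with exactly one treated unit, and then apply Hoeffding's inequality (Theorem~\ref{thm:hoeffding}) to the independent block contributions.

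First, I would represent $Z\sim\cre(n_0,n_1)$ with $n=Kn_1$ through a two-stage draw. Partition $[n]$ uniformly at random into $n_1$ blocks $G_1,\ldots,G_{n_1}$ of size $K$. Then, independently within each block, choose one unit uniformly to be treated. The resulting treated set is a uniformly random $n_1$-subset, so marginally this is exactly $\cre(n_0,n_1)$. Conditional on the partition, the HT estimator decomposes as
\begin{equation*}
\tauh_n=\frac{1}{n_1}\sum_{g=1}^{n_1}\hat\theta_g,\qquad \hat\theta_g:=Y_{i_g}(1)-\frac{1}{K-1}\sum_{j\in G_g\setminus\{i_g\}}Y_j(0),
\end{equation*}
where $i_g$ is the treated unit of block $g$. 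This uses $1/n_1=K/n$ and $1/n_0=1/((K-1)n_1)$. Given the partition, the $\hat\theta_g$ are independent and $\E[\hat\theta_g]=\frac1K\sum_{j\in G_g}(Y_j(1)-Y_j(0))=:\theta_g$. Summing gives $\frac{1}{n_1}\sum_g\theta_g=\tau_n$, so the conditional mean of $\tauh_n$ is exactly the SATE.

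Second, I would bound the range of each $\hat\theta_g$. Both $Y_{i_g}(1)$ and the average of the remaining $Y_j(0)$ lie in $[a,b]$, so $\hat\theta_g\in[a-b,\,b-a]$, an interval of length $2(b-a)$. Applying Theorem~\ref{thm:hoeffding} conditionally with $n_1$ variables gives, with probability at least $1-\alpha$,
\begin{equation*}
|\tauh_n-\tau_n|\le 2(b-a)\sqrt{\frac{\log(2/\alpha)}{2n_1}}=\frac{b-a}{\sqrt{\pi}}\sqrt{\frac{2\log(2/\alpha)}{n}},
\end{equation*}
using $n_1=\pi n$. Integrating over the partition removes the conditioning, since the bound does not depend on the partition. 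This is exactly the stated interval.

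The main obstacle is the first step. I need to check carefully that the two-stage procedure reproduces the uniform distribution over $n_1$-subsets. By symmetry, every subset that hits each block once is equally likely, and averaging over uniformly random partitions makes all $n_1$-subsets equally likely. I also need to check that the blockwise estimator coincides with the standard HT estimator when $\pi=1/K$. This holds because each block contains exactly one treated unit and $K-1$ controls, so the weights $1/n_1$ and $1/n_0$ factor as above. If the range bound turns out too loose relative to the stated constant, I would instead apply the $m$-centred form, but the computation above already matches the constant.
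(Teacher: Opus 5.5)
Your proposal is correct and follows the approach the paper indicates: the paper gives no proof of its own and simply cites \citet{sandoval_nonasymptotic_2026}, describing their argument as exactly this mini-batch complete randomization, which is marginally equivalent to the CRE and whose modified HT estimator reduces to the standard one when $\pi=1/K$. Your symmetry argument for the marginal equivalence, the blockwise decomposition with conditional mean $\tau_n$, and the conditional Hoeffding bound with range $2(b-a)$ over $n_1=\pi n$ independent blocks reproduce the stated constant exactly.
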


Lastly, in a commentary on \cite{rigdon_randomization_2015}'s work, \cite{li_exact_2016} consider the special setting of binary outcomes. This additional structure allows them to characterize the distribution of the HT estimator in terms of hypergeometric distributions and compute confidence intervals by inverting randomization tests.

Figure~\ref{fig:cre-hoeffding} compares the length of the confidence intervals from Propositions~\ref{prop:cre-hoeffding} and~\ref{prop:cre-sandoval} as well as~\cite{li_exact_2016}. (To aid readability, we tacitly ignore that only finitely many values of $\pi$ are possible for finite $n$ and that the interval of Proposition~\ref{prop:cre-sandoval} only applies for $\pi = 1/K$.) Similar to the findings in Bernoulli trials, we observe that the better scaling behaviour of \citeauthor{sandoval_nonasymptotic_2026}'s intervals yields shorter lengths in the limit $\pi \to 0$. For moderately large $\pi$, however, Barber-Hoeffding intervals are preferable. For the case of binary outcomes, we clearly see that \cite{li_exact_2016}'s interval is the shortest. Nonetheless, if the sizes of the treatment and control group do not substantially differ, our Hoeffding interval gets reasonably close to this benchmark.


\begin{figure}[htbp]
    \centering
    \includegraphics[scale=0.55]{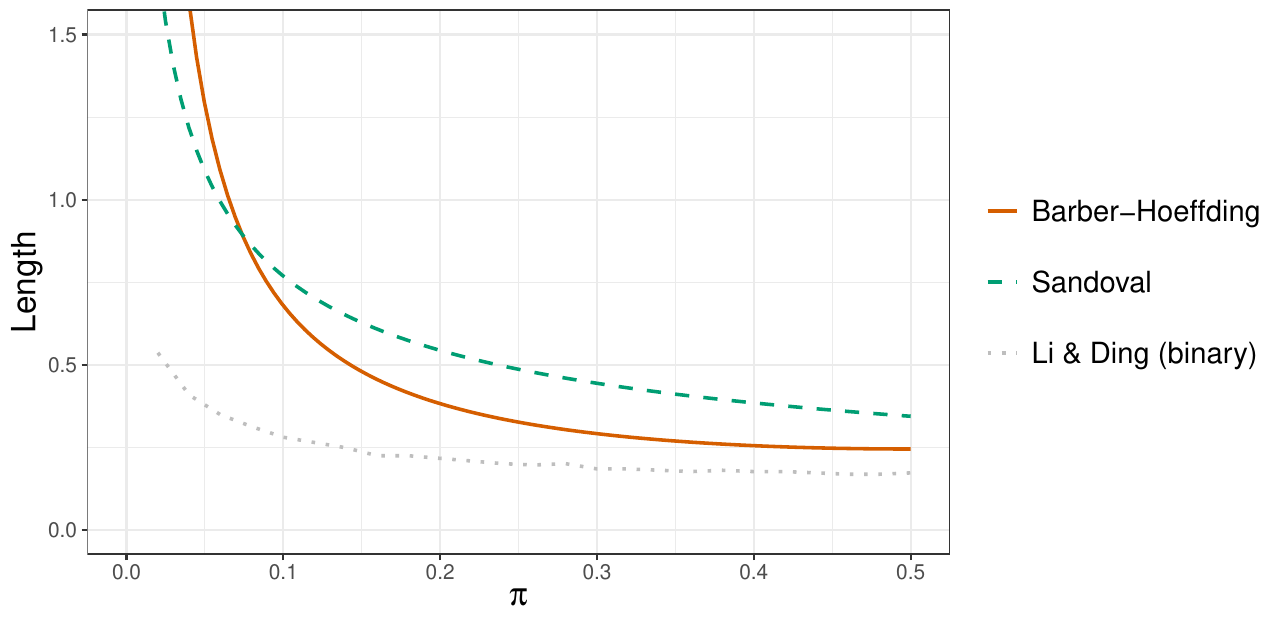}
    \caption{Length of Hoeffding-type confidence intervals in a completely randomized experiment as a function of $\pi$. We set $\alpha = 0.05, n=500$ and $[a,b] = [0,1]$ and only display values of $\pi$ smaller than 0.5 due to symmetry. (For \citet{li_exact_2016}'s confidence interval, we assume that $Y=1$ and $Y=0$ outcomes occur equally often in both treatment and control group and we use 2000 permutations to compute the randomization distribution in the inversion of tests.)}
    \label{fig:cre-hoeffding}
\end{figure}

\subsection{Bernstein Confidence Intervals}l\label{sec:bernstein} Next, we construct non-asymptotic confidence intervals that also take the variance of the HT estimator into account. In his celebrated \citeyear{splawa-neyman_application_1990}-paper, \citeauthor{splawa-neyman_application_1990} derived the decomposition
\begin{equation*}
	V:= \var(\sqrt{n}\,\tauh_n) = \frac{S(1)}{n_1/n} + \frac{S(0)}{n_0/n} - S(\tau),
\end{equation*}
where $S(0)$ and $S(1)$ are the the sample variances of the potential outcomes $Y_i(0)$ and $Y_i(1)$, respectively, and $S(\tau)$ is the sample variance of the individual treatment effects. (Note that we re-scale the variance so that $V=\mathcal{O}(1)$ in this work.)
Since all involved quantities are non-negative, but only the former two can be estimated, a commonly used conservative estimator of $V$ is given by
\begin{equation}\label{eq:var-ney}
	\vhn := \frac{\sh(1)}{n_1/n} + \frac{\sh(0)}{n_0/n},
\end{equation}
where we define
\begin{equation*}
	\sh(k) := \frac{1}{n_k-1} \sum_{i=1}^n \one\{Z_i=k\} (Y_i(k) - \ybh(k))^2,\qquad
	\ybh(k) := \frac{1}{n_k} \sum_{i=1}^n \one\{Z_i=k\} Y_i(k),
\end{equation*}
for $k \in \{0,1\}$. This conservative estimator is also used for asymptotic Wald-type intervals $[\tauh_n \pm \Phi^{-1}(1-\alpha/2)\vhn^{1/2}/\sqrt{n}]$, which are based on a finite-population central limit theorem \citep{li_general_2017}.

Since we did not succeed in deriving a concentration inequality involving $\vhn$ via a self-normalizing construction as in Section~\ref{sec:bernoulli}, we resort to the classical approach: We first prove a concentration inequality for $\vhn$ and then combine this result with an (oracle) Bernstein inequality via the union bound. The following theorem is one of our key contributions; we defer the full proof to Appendix~\ref{app:proof-neyman} but provide a sketch of the main steps here.

\begin{theorem}\label{thm:neyman-conc}
 Suppose $Z \sim \cre(n_0, n_1)$, where $n_0,n_1 \geq 2$, and let $\delta \in (0,1)$. Then, with probability $1-\delta$,
\begin{equation*}
    V^{1/2} \leq \E[\vhn]^{1/2} \leq \vhn^{1/2} + (b-a) \sqrt{\frac{c(n_0,n_1)\log(1/\delta)}{n}},
\end{equation*}
where the constant is given by
\begin{equation*}
    c(n_0,n_1) := \frac{n\,(n_0^2+n_1^2)^2}{n_0n_1^3(n_0-1) + n_1 n_0^3(n_1-1)}.
\end{equation*}
\end{theorem}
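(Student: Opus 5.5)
The first inequality is the easy part. Under $\cre(n_0,n_1)$ each $\sh(k)$ is the sample variance of a simple random sample of size $n_k$ drawn without replacement from $\{Y_i(k)\}_{i\in[n]}$. It is therefore unbiased for $S(k)$, so $\E[\vhn]=S(1)/(n_1/n)+S(0)/(n_0/n)=V+S(\tau)\geq V$. Taking square roots gives $V^{1/2}\leq\E[\vhn]^{1/2}$.

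For the second inequality I only need a \emph{lower-tail} bound for $\vhn$. The target is
\begin{equation*}
\PR\big(\E[\vhn]-\vhn\geq t\big)\leq \exp\!\Big(-\frac{n\,t^2}{2(b-a)^2c(n_0,n_1)\,\E[\vhn]}\Big),
\end{equation*}
which is a sub-Gaussian bound whose variance proxy is proportional to the mean; this is the self-bounding structure familiar from Maurer--Pontil. Given it, setting the right-hand side equal to $\delta$ yields $\E[\vhn]\leq \vhn+\sqrt{2(b-a)^2c\,\E[\vhn]\log(1/\delta)/n}$. This is a quadratic inequality in $x=\E[\vhn]^{1/2}$. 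Solving it and using $\sqrt{u+v^2/4}\leq \sqrt u+v/2$ gives $x\leq \vhn^{1/2}+(b-a)\sqrt{c\log(1/\delta)/n}$, possibly after absorbing a factor $2$ into the definition of the proxy.

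To prove the lower tail despite the dependence among the $Z_i$, I would use Chatterjee's method of exchangeable pairs. Let $Z'$ be obtained from $Z$ by choosing a treated unit $I$ and a control unit $J$ uniformly at random and swapping their assignments. Then $(Z,Z')$ is exchangeable. Set $f(Z)=\vhn-\E[\vhn]$. Because $\E[f(Z)-f(Z')\mid Z]$ is an affine function of $f$, I can take $F(Z,Z')=\kappa\,(f(Z)-f(Z'))$ with an explicit $\kappa$ of order $n_0n_1$. I will verify this by expanding each $\sh(k)$ as a U-statistic $\frac{1}{n_k(n_k-1)}\sum_{i\neq j}\one\{Z_i=Z_j=k\}(Y_i(k)-Y_j(k))^2/2$ and computing how the swap changes each pair indicator in expectation. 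Chatterjee's theorem then gives a lower tail of the desired form, provided the conditional variance satisfies
\begin{equation*}
v(Z):=\tfrac{\kappa}{2}\E\big[(f(Z)-f(Z'))^2\mid Z\big]\leq B\,\vhn+C
\end{equation*}
for suitable constants $B,C$.

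The main obstacle is this self-bounding estimate. I would bound the change caused by swapping $I$ and $J$ directly. Removing $Y_I(1)$ from the treated sample and inserting $Y_J(1)$ changes $\sh(1)$ by roughly $\frac{1}{n_1-1}\big[(Y_J(1)-\ybh(1))^2-(Y_I(1)-\ybh(1))^2\big]$, up to mean-shift corrections; the analogous statement holds for $\sh(0)$. I would then average the square of the total change over $I$ and $J$. Using $(Y-\ybh(k))^2\leq (b-a)^2$ on one factor leaves $\frac{1}{n_k}\sum(Y_i-\ybh(k))^2\approx \sh(k)$ on the other, so $v(Z)$ is linear in $\sh(0)$ and $\sh(1)$. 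I expect that matching the weights $n/n_1$ and $n/n_0$ to $\vhn$ is where the constant $c(n_0,n_1)$, with its terms $n_0n_1^3(n_0-1)$ and $n_1n_0^3(n_1-1)$, comes from. If the crude bounds do not reproduce this exact constant, I would keep the exact quadratic forms and bound them by Cauchy--Schwarz.
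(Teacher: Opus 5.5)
\textbf{The gap: your $F$ is not a valid Stein pair.} Chatterjee's theorem needs $\E[F(Z,Z')\mid Z]=f(Z)$ exactly. With $F=\kappa(f(Z)-f(Z'))$, this says that $\vhn-\E[\vhn]$ is an eigenfunction of your swap chain, and it is not. Carrying out the U-statistic computation you describe gives, with $\sh(1)'$ denoting $\sh(1)$ computed from $Z'$ and $\yb(1)=\frac1n\sum_iY_i(1)$,
\begin{equation*}
\E\big[\sh(1)-\sh(1)'\,\big\vert\, Z\big]=\Big(\frac{2(n-1)}{n_0n_1}-\frac{n(n_1-1)}{n_0n_1^2}\Big)\sh(1)-\frac{n}{n_0n_1}\big(\ybh(1)-\yb(1)\big)^2-\frac{n-1}{n_0n_1}\,S(1).
\end{equation*}
The mean-shift term $(\ybh(1)-\yb(1))^2$ is not a function of $\sh(1)$. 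Moreover, for $n_0\neq n_1$ the $\sh(1)$- and $\sh(0)$-parts are contracted at different rates. Structurally, $\vhn$ is a sum of a linear (H\'ajek) component and a degenerate quadratic component. These satisfy $\E[h(Z)-h(Z')\mid Z]=\lambda h(Z)$ with different eigenvalues, $\lambda_1=n/(n_0n_1)$ and $\lambda_2=2(n-1)/(n_0n_1)$. A concrete instance: take $n_0=n_1=3$, $Y(0)\equiv0$, $Y(1)=(0,0,0,0,1,1)$. The treated sets $\{1,2,5\}$ and $\{1,5,6\}$ both give $\vhn=2/3$, but $\E[\vhn(Z)-\vhn(Z')\mid Z]$ equals $4/27$ and $0$ respectively, so no $\kappa$ exists. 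With your $F$, Chatterjee's bound controls $\E[F\mid Z]$, which is a different statistic. A repair would need the Poisson-equation solution $F=g(Z)-g(Z')$ with $g=f_1/\lambda_1+f_2/\lambda_2$, where $f_1,f_2$ are the two components. You would then have to bound $\E[\lvert(f(Z)-f(Z'))(g(Z)-g(Z'))\rvert\mid Z]$. That is a different computation from the one you sketch, and there is no reason to expect it to return $c(n_0,n_1)$.

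\textbf{The self-bounding step also fails as stated.} Your $v(Z)$ contains the squared change for swaps that \emph{increase} $\vhn$. These involve $Y_J(1)$ for the incoming control unit and $Y_I(0)$ for the incoming treated unit, which are unobserved and not controlled by $\vhn(Z)$. For example, if all treated $Y_i(1)$ coincide and all control $Y_j(0)$ coincide, then $\vhn(Z)=0$, yet $v(Z)>0$ as soon as some control unit has a different $Y_j(1)$. So $v$ is not linear in $\sh(0),\sh(1)$, and $C$ would have to be a population quantity. That is legitimate, but it is a different estimate. In addition, your sub-Gaussian target, inverted as you describe, gives the deviation term with an extra factor $\sqrt2$. ``Absorbing a factor $2$'' therefore means you would need a tail twice as strong as the one you state.

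\textbf{How the paper's route differs.} Salez's modified log-Sobolev inequality on the multislice needs no regression structure at all. Exchangeability of $(Z,\tau_{ij}Z)$ then reduces the Dirichlet form to the positive part $(\vhn(Z)-\vhn(\tau_{ij}Z))_+^2$, i.e.\ to swaps that \emph{decrease} $\vhn$. In these only the observed deviations $Y_i(1)-\ybh_{-i}(1)$, $i\in I_1$, and $Y_j(0)-\ybh_{-j}(0)$, $j\in I_0$, appear, and balancing the cross term gives exactly $c(n_0,n_1)$. Herbst's argument then produces the sub-gamma form $\lambda^2c\,\E[\vhn]/(1+c\lambda)$ with $c=(b-a)^2c(n_0,n_1)/n$. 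That denominator is what yields $(\vhn^{1/2}+\sqrt{c\log(1/\delta)})^2$ without the $\sqrt2$ loss.

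One caution if you take that route. The inequality $(e^x-e^y)(x-y)_+\le e^x(x-y)_+^2$ turns $(\lambda\vhn(Z)-\lambda\vhn(\tau_{ij}Z))_+^2$ into $\lambda^2(\vhn(Z)-\vhn(\tau_{ij}Z))_+^2$ only for $\lambda\ge0$. For the lower tail ($\lambda<0$) you must also use that a single swap lowers $\vhn$ by at most $(b-a)^2n(n_1^{-2}+n_0^{-2})$, a step the paper's written proof does not address.
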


\begin{proof}[Proof sketch]
	First, in Lemma~\ref{lem:self-bounded}, we prove that Neyman's variance estimator has a self-boundedness property with respect to the swap operator, which permutes the treatment assignments of two units.
    
    Second, we realize that the completely randomized assignment mechanism is the stationary distribution of the Markov chain pertaining to the $n_1$-particle Bernoulli-Laplace diffusion model on $n$ sites. For these processes modified log-Sobolev concentration inequalities have been studied in the literature \citep{lee_logarithmic_1998} and also been recently linked to sampling without replacement \citep{sambale_concentration_2022}. In Theorem~\ref{thm:mod-log-sob}, we re-state the sharp inequality of \citet{salez_sharp_2021} in our setting.
    
    Third, we combine this concentration inequality with the self-boundedness property of~$\vhn$, and use an adapted version of Herbst's argument \citep{boucheron_concentration_2013}[chap.\ 6] to complete the proof.
\end{proof}

We are now in position to derive empirical Bernstein and Bennett confidence intervals. \citet{barber_hoeffding_2024} has recently proved an (oracle) Bernstein inequality for weighted sums of exchangeable random variables, which is amenable to our setting since the the treatments $Z_i$ are exchangeable and (functions of) the potential outcomes act as weights. In Theorem~\ref{thm:rina-bernstein} in the appendix, we state her theorem~3.3 and also derive the slightly tighter, analogue Bennett concentration inequality. Combining this theorem with the concentration of the Neyman estimator, cf.\ Theorem~\ref{thm:neyman-conc}, we get the following result. The proof is deferred to Appendix~\ref{app:cre-bernstein-barber}.

\begin{proposition}\label{prop:cre-bernstein}
    Suppose $Z \sim \cre(n_0,n_1)$, where $n_0,n_1 \geq 2$, and let $\alpha \in (0,1)$. Let $c(n_0,n_1)$ be defined as in Theorem~\ref{thm:neyman-conc}, and set $\epsilon'_n := \frac{\epsilon_n-1}{n-\epsilon_n}$ and $\epsilon_n'' := 4\epsilon_n'\frac{n(n-1)}{n_0n_1}-\frac{1}{n}$, where $\epsilon_n = \sum_{j=1}^n \frac{1}{j}$. Then, an explicit Bernstein $1-\alpha$ confidence interval for $\tau_n$ is given by
    \begin{equation*}
        \left[\tauh_n \pm \left(\sqrt{\frac{2(1+\epsilon_n'')(1+\epsilon_n') \vhn \log(3/\alpha)}{n}} + \frac{C(n_0,n_1)(b-a)}{n} \log(3/\alpha)\right)\right],
    \end{equation*}
    where
    \begin{equation*}
        C(n_0,n_1)= \frac{1+\epsilon_n'}{3}\frac{n(n-1)}{n_0\,n_1} + \sqrt{2(1+\epsilon_n'')(1+\epsilon_n')c(n_0,n_1)}.
    \end{equation*}
    The smaller implicit $1-\alpha$ Bennett confidence interval takes the form
    \begin{equation*}
        \left[\tauh_n \pm \frac{(1+\epsilon_n'')\,n_0\,n_1\,\tilde{V}}{n(n-1)(b-a)}\,h_2^{-1}\left(\frac{(1+\epsilon_n')\,n(n-1)^2\,(b-a)^2 \log(3/\alpha)}{(1+\epsilon_n'')\,n_0^2n_1^2\,\tilde{V}}\right)\right],
    \end{equation*}
    where $h_2(u):=(1+u)\log(1+u)-u$ and
    \begin{equation*}
        \tilde{V} = \left(\vhn^{1/2}+(b-a)\sqrt{\frac{c(n_0,n_1)\log(3/\alpha)}{n}}\right)^2.
    \end{equation*}
\end{proposition}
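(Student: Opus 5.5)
The plan is a union bound over the two events that appear in Proposition~\ref{prop:cre-bernstein}, with budget $\alpha/3$ for the variance and $2\alpha/3$ for the two-sided deviation. First, I would write $\tauh_n-\tau_n=\sum_{i=1}^n w_i X_i$ exactly as in the proof of Proposition~\ref{prop:cre-hoeffding}. Here $X_i=Z_i-\pi\in\{-\pi,1-\pi\}$ are exchangeable with $\bar X=0$, and the weights are $w_i=\frac{1}{n}\frac{(1-\pi)Y_i(1)+\pi Y_i(0)-m}{\pi(1-\pi)}$ with $m=(a+b)/2$. Then
\begin{equation*}
\lVert w\rVert_\infty\le \frac{b-a}{2n\pi(1-\pi)}=\frac{(b-a)n}{2n_0n_1}.
\end{equation*}

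Next, I would apply the oracle Bernstein/Bennett inequality for weighted sums of exchangeable variables (Theorem~\ref{thm:rina-bernstein}, the extension of Barber's theorem~3.3) at level $2\alpha/3$, so each tail gets $\alpha/3$. Its variance proxy is a quantity of the form $\lVert w-\bar w\rVert_2^2\cdot\frac{1}{n}\sum_i (X_i-\bar X)^2=\lVert w-\bar w\rVert_2^2\,\pi(1-\pi)$. Its range term involves $\max_i|w_i-\bar w|$ times the length of the $X$-support (which is $1$), up to a factor like $(n-1)/n$. I would then relate the proxy to $V/n$. Since $\var(\sum_i w_iX_i)=\frac{n}{n-1}\pi(1-\pi)\lVert w-\bar w\rVert_2^2$, we get $\lVert w-\bar w\rVert_2^2\pi(1-\pi)=\frac{n-1}{n}\cdot\frac{V}{n}$. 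The correction factor $1+\epsilon_n''$, with its $\frac{n(n-1)}{n_0n_1}$ and $-1/n$ pieces, is exactly where I expect the bookkeeping between Barber's proxy (which may use $\max$ rather than centred weights, or a slightly different normalization) and $V$ to surface. I would therefore carefully express Barber's variance term as $(1+\epsilon_n'')V/n$ up to the factor $(1+\epsilon_n')$ that comes with her inequality.

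The remaining steps are as follows. On the event of Theorem~\ref{thm:neyman-conc} at level $\alpha/3$, we have $V\le\tilde V$, with $\tilde V$ as defined in the statement. The Bennett bound is monotone increasing in the variance proxy: $\sigma^2 h_2^{-1}(t/\sigma^2)$ increases in $\sigma^2$. So replacing $V$ by $\tilde V$ preserves validity and gives the implicit interval once the constants $\frac{n_0n_1}{n(n-1)(b-a)}$ are matched to the range bound. For the explicit interval, I would use the Bernstein form $\sqrt{2\sigma^2\log(3/\alpha)}+\frac{R}{3}\log(3/\alpha)$, where $R$ is the range term. This produces the $\frac{1+\epsilon_n'}{3}\frac{n(n-1)}{n_0n_1}$ part of $C(n_0,n_1)$. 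Then I would substitute $\sqrt{\tilde V}=\vhn^{1/2}+(b-a)\sqrt{c\log(3/\alpha)/n}$. Because $\sqrt{\tilde V}$ is a sum, the square root splits linearly, and the second piece yields $\sqrt{2(1+\epsilon_n'')(1+\epsilon_n')c}\,(b-a)\log(3/\alpha)/n$, the other part of $C$.

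The main obstacle is the constant matching in the second step. I need the precise form of the variance and range terms in Theorem~\ref{thm:rina-bernstein}, and I must show that, with $m$-centring, the range term reduces to $\frac{(b-a)(n-1)}{n_0n_1}$-type quantities. I would first try bounding $|w_i-\bar w|\le 2\lVert w\rVert_\infty$ (oscillation of weights in an interval of length $(b-a)/(n\pi(1-\pi))$). If that gives a factor of $2$ mismatch, I would instead use the fact that $w_i$ lies in an interval of length $\frac{(b-a)n}{n_0n_1}$, so $|w_i-\bar w|$ is bounded by that length.
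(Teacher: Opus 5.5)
Your skeleton is the paper's. You write $\tauh_n-\tau_n=\sum_i w_iX_i$ with $X_i=Z_i-\pi$, apply the two-sided Bernstein and Bennett bounds of Theorem~\ref{thm:rina-bernstein} with $N=n$ at level $2\alpha/3$, intersect with the event of Theorem~\ref{thm:neyman-conc} at level $\alpha/3$, and split $\sqrt{\tilde V}$ linearly. Centring the weights is legitimate because $\sum_i X_i=0$; this is the paper's choice $m=(1-\pi)\yb(1)+\pi\yb(0)$, which makes $\sum_i w_i=0$. Your identity $\lVert w-\bar w\rVert_2^2=\frac{n-1}{n_0n_1}V$ is a harmless sharpening of the paper's $\lVert w\rVert_2^2\le\frac{n-1}{n_0n_1}\E[\vhn]$, since Theorem~\ref{thm:neyman-conc} bounds both $V^{1/2}$ and $\E[\vhn]^{1/2}$. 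You also correctly note that the Bennett radius is monotone in the variance proxy.

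The gap is the constant matching you defer, and neither of your tentative resolutions reproduces the stated constants.

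\emph{Variance proxy.} The proxy in Theorem~\ref{thm:rina-bernstein} is $\tilde\sigma_{X,N}^2\lVert w\rVert_2^2$, with the additive inflation $\tilde\sigma_{X,N}^2=\sigma_X^2+4\epsilon_N'$ and here $\sigma_X^2=\frac{1}{n}\sum_i(Z_i-\pi)^2=\frac{n_0n_1}{n^2}$. Your proxy $\sigma_X^2\lVert w-\bar w\rVert_2^2$ would give $(1-\frac{1}{n})\frac{V}{n}$, which the theorem does not deliver. The factor $1+\epsilon_n''$ is exactly this inflation, not a max-versus-centred bookkeeping effect:
\[
\Bigl(\frac{n_0n_1}{n^2}+4\epsilon_n'\Bigr)\frac{n-1}{n_0n_1}\,V=\Bigl(1-\frac{1}{n}+4\epsilon_n'\frac{n(n-1)}{n_0n_1}\Bigr)\frac{V}{n}=(1+\epsilon_n'')\frac{V}{n}.
\]

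\emph{Range term.} The theorem uses a single weight vector in both terms, so you need $\ell_\infty$ control of the centred weights. The bound $\frac{(b-a)n}{2n_0n_1}$ for $m=(a+b)/2$ cannot be combined with $\lVert w-\bar w\rVert_2$. Your fallback $\lvert w_i-\bar w\rvert\le\frac{(b-a)n}{n_0n_1}$ produces $\frac{n^2}{n_0n_1}$ instead of $\frac{n(n-1)}{n_0n_1}$ in $C(n_0,n_1)$ and in the Bennett expression. That is a valid but wider interval than the one stated. The missing factor $\frac{n-1}{n}$ does not come from the theorem; it comes from $\bar w$ containing $w_i$ itself. With $r_i:=(1-\pi)Y_i(1)+\pi Y_i(0)\in[a,b]$ one has
\[
\lvert w_i-\bar w\rvert=\frac{\lvert r_i-\bar r\rvert}{n\pi(1-\pi)}\le\frac{n-1}{n}\cdot\frac{(b-a)n}{n_0n_1}=\frac{(n-1)(b-a)}{n_0n_1}.
\]
The paper obtains the same bound from $\lvert Y_i(k)-\yb(k)\rvert\le\frac{n-1}{n}(b-a)$.

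With these two inputs, both stated intervals follow by direct substitution.
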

\begin{remark} Instead of basing our proof on \citet{barber_hoeffding_2024}'s Bernstein inequality, we could have also used \citet{bardenet_concentration_2015}'s concentration inequality for sampling without replacement. This generally leads to longer intervals due to the larger constant in the $\mathcal{O}(n^{-1})$-term, however. For the interested reader, we provide the corresponding proposition and a comparison with the confidence intervals of Proposition~\ref{prop:cre-bernstein} in Appendix~\ref{app:cre-bernstein-bardenet}.

\citet{greene_exponential_2017} also prove a Bernstein inequality for sampling from a finite population without replacement. Yet, their result uses the variance of a majorizing population which may be considerably larger than the actual variance. For this reason, we did not pursue a confidence interval based on their concentration inequality in this article.
\end{remark}

Analogously to the Bernoulli trial, we can compare the length of the Bernstein confidence interval of Proposition~\ref{prop:cre-bernstein} with the asymptotic Wald confidence interval $[\tauh_n\pm\Phi^{-1}(1-\alpha/2)\vhn^{1/2}/\sqrt{n}]$. For large $n$, the ratio of the lengths of the two intervals approaches
\begin{equation*}
    \frac{\sqrt{2\log(3/\alpha)}}{\Phi^{-1}(1-\alpha/2)},
\end{equation*}
which evaluates to $\approx 1.46$ for $\alpha = 0.05$. In finite samples, we find that the difference in length may be considerably bigger, though.

In Figure~\ref{fig:cre-bernstein}, we compare the length of the confidence intervals in Propositions~\ref{prop:cre-hoeffding} and~\ref{prop:cre-bernstein} as well as the Wald confidence interval as a function of the sample size $n$. We again observe that Bernstein- and Bennett-intervals can substantially improve upon Hoeffding intervals when the variance term is moderately small. Interestingly, the gap between the Bernstein and Bennett intervals is almost zero even for small $n$. Lastly, we notice that the Bernstein interval is rather wide compared to the Wald interval: While the ratio of their lengths is about 1.46 as $n\to \infty$, it still equals $\approx 2.47$ at $n=1000$ in the setting of the Figure~\ref{fig:cre-bernstein}. This slower ``convergence behaviour'' compared to Bernoulli trials can mostly be attributed to a larger constant in the lower order term of the Bernstein interval.

\begin{figure}[htbp]
    \centering
    \includegraphics[scale=0.65]{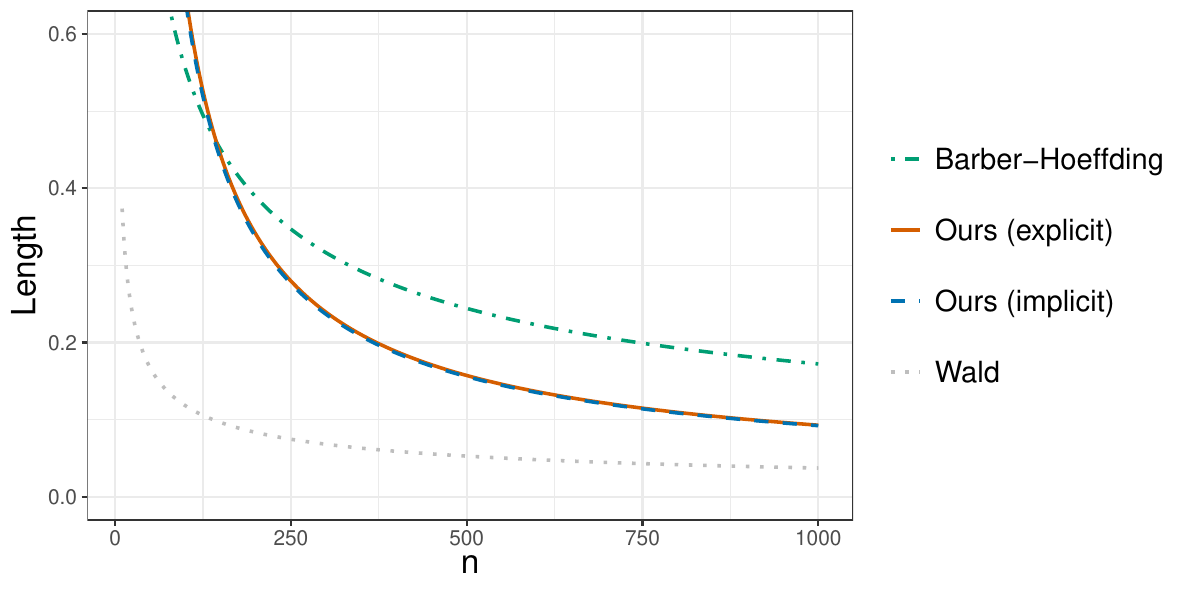}
    \caption{Length of confidence intervals in a completely randomized experiment as a function of $n$. We set $\alpha = 0.05, \pi=1/2$ and $[a,b] = [0,1]$. Moreover, we use $\vh = \sigma^2_{5,5}/(\pi(1-\pi))$, where $\sigma^2_{5,5}$ is the variance of a $\mathrm{Beta}(5,5)$-distribution.}
    \label{fig:cre-bernstein}
\end{figure}

\subsection{Cluster Randomized Experiments} In some settings, it is logistically difficult or unethical to assign each unit individually to treatment or control, but it may still be feasible to randomize groups of units. Depending on the application, these groups may be schools, households, hospitals or villages for instance. A study which randomizes at the group- instead of the individual-level is accordingly called
a cluster randomized experiment; for details about the design-based analysis of such trials we refer the reader to \citet{middleton_unbiased_2015,su_model-assisted_2021,schochet_design-based_2022}. Here, we consider the most popular version where groups/clusters enter a CRE.

Let $N$ be the number of groups and $n_j$ is the number of units in the $j$-th cluster, where $j\in\{1,\ldots,N\}$; consequently $n=\sum_{j=1}^N n_j$ is the total number of units in the experiment. The treatment assignment mechanism is denoted $Z \sim \mathrm{CCRE}(N_0,N_1; n_1,\ldots,n_N)$~and (assuming that the units are ordered according to their group membership)  defined as $Z = (Z^*_1 \one_{n_1}, \ldots,Z_N^*\one_{n_N})$, where $Z^* \sim \cre(N_0,N_1)$ and $\one_{n_j}:=(1,\ldots,1)\in \R^{n_j}$. Hence, all members of one group are administered the same treatment.

In order to ease exposition, we introduce double indexing and let $Z_{ji}$ and $Y_{ji}(0)$ and $Y_{ji}(1)$ denote the treatment and potential outcomes of the $i$-th unit in the $j$-th cluster, respectively, where $i\in\{1,\ldots,n_j\}$ and $j \in \{1,\ldots,N\}$. This allows us to re-formulate the $m$-centred HT estimator as follows:
\begin{align*}
    \tauh_n^m &= \frac{1}{n}\sum_{j=1}^N \sum_{i=1}^{n_j}(Y_{ji}(1) -m)\frac{Z_{ji}}{N_1/N} - (Y_{ji}(0) -m)\frac{1-Z_{ji}}{N_0/N}\\
    &= \frac{1}{N}\sum_{j=1}^N\left(\frac{N}{n}\sum_{j=1}^{n_j}(Y_{ji}(1)-m)\right) \frac{Z^*_j}{N_1/N} - \left(\frac{N}{n}\sum_{j=1}^{n_j}(Y_{ji}(0)-m)\right) \frac{1-Z^*_j}{N_0/N}\\
    &=: \frac{1}{N}\sum_{j=1}^N Y_j^*(1) \frac{Z^*_j}{N_1/N} - Y_j^*(0) \frac{1-Z^*_j}{N_0/N}.
\end{align*}
This shows that we can also regard a CCRE as a completely randomized experiment with the pseudo potential outcomes $Y_j^*(0)$ and $Y_j^*(1)$. Hence, the results from the previous section extend to this design, as well. Interestingly, though, while all $m$-centred HT estimators are unbiased, for unequal group sizes they are not equal. This property is different from CREs and requires choosing the hyper-parameter $m$ to minimize the length of the confidence intervals. As for independent treatment assignments, this is achieved at the midpoint $m=\frac{b-a}{2}$. Noticing that each $Y^*_j(k)$, $k\in\{0,1\}$, is supported on an interval of length $\frac{N}{n} (b-a)n_j \leq \frac{N}{n} (b-a) \max_{1\leq j\leq N} n_j$, we can directly apply Propositions~\ref{prop:cre-hoeffding} and~\ref{prop:cre-bernstein} to obtain confidence intervals. For brevity, we only state the Hoeffding and our explicit Bernstein interval here.

\begin{proposition}
    Suppose $Z\sim \mathrm{CCRE}(N_0, N_1; n_1,\ldots,n_N)$, where $N_0, N_1 \geq 2$ and let $\alpha \in(0,1)$. Let $\epsilon_N'$ be defined as in Proposition~\ref{prop:cre-hoeffding} and set $\nb := \max_{1\leq j\leq N} n_j$. Then, a $1-\alpha$ Hoeffding confidence interval for $\tau_n$ is given by
    \begin{equation*}
        \left[\tauhm_n \pm \frac{b-a}{2} \frac{N^3 \nb}{N_0N_1n}\sqrt{\frac{(1+\epsilon_N')\log(2/\alpha)}{2N}}\right].
    \end{equation*}
    Let $\vhn^*$ be Neyman's variance estimator with respect to the pseudo potential outcomes. An explicit $1-\alpha$ Bernstein confidence interval is given by
    \begin{equation*}
        \left[\tauhm_n \pm \left(\sqrt{\frac{2(1+\epsilon_N'')(1+\epsilon_N') \vhn^* \log(3/\alpha)}{N}} + \frac{C(N_0,N_1)\,\nb\,(b-a)}{n} \log(3/\alpha)\right)\right],
    \end{equation*}
    where $\epsilon_N''$ and $C(N_0, N_1)$ are defined analogously to Proposition~\ref{prop:cre-bernstein}.
\end{proposition}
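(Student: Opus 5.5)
The plan is to reduce the cluster design to a CRE on the $N$ clusters via the pseudo potential outcomes $Y^*_j(k)$, and then invoke Propositions~\ref{prop:cre-hoeffding} and~\ref{prop:cre-bernstein} with $n$ replaced by $N$ and $(n_0,n_1)$ by $(N_0,N_1)$. The display preceding the statement already shows that $\tauh^m_n$ equals the HT estimator for a $\cre(N_0,N_1)$ with outcomes $Y^*_j(k)=\frac{N}{n}\sum_{i}(Y_{ji}(k)-m)$. Moreover, the SATE satisfies $\tau_n=\frac1N\sum_j Y^*_j(1)-Y^*_j(0)$, so the target is the SATE of the pseudo experiment. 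Both claimed intervals therefore follow once we control the range of the pseudo outcomes.

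\textbf{Hoeffding interval.} I would not apply Proposition~\ref{prop:cre-hoeffding} as a black box, because the $Y^*_j$ live in cluster-dependent intervals. Instead I would redo its proof. Write
\begin{equation*}
\tauh^m_n-\tau_n=\sum_{j=1}^N w_j (Z^*_j-\pi^*), \qquad \pi^*=N_1/N,
\end{equation*}
with $w_j=\frac{1}{N}\frac{(1-\pi^*)Y^*_j(1)+\pi^*Y^*_j(0)}{\pi^*(1-\pi^*)}$. The numerator is a convex combination of $\frac Nn\sum_i (Y_{ji}(k)-m)$. With $m=(a+b)/2$ it is therefore bounded by $\frac Nn n_j\frac{b-a}{2}\le \frac Nn\nb\frac{b-a}{2}$. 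This gives
\begin{equation*}
\lVert w\rVert_2\le \frac{b-a}{2}\frac{N^2}{N_0N_1}\frac{N\nb}{n}\frac{1}{\sqrt N}.
\end{equation*}
Applying the Barber--Hoeffding inequality (the theorem used in Proposition~\ref{prop:cre-hoeffding}) with $N$ exchangeable $Z^*_j$ yields exactly the stated width $\frac{b-a}{2}\frac{N^3\nb}{N_0N_1n}\sqrt{(1+\epsilon'_N)\log(2/\alpha)/(2N)}$.

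\textbf{Bernstein interval.} Here I follow the proof of Proposition~\ref{prop:cre-bernstein} in Appendix~\ref{app:cre-bernstein-barber}. It has two ingredients, Barber's oracle Bernstein inequality for $\sum_j w_j(Z^*_j-\pi^*)$ and Theorem~\ref{thm:neyman-conc} for $\vhn^*$, combined by a union bound.

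Both ingredients depend on the outcome range only through $b-a$, entering as a bound on $\max_j\lvert w_j\rvert$ and via the bounded-differences/self-boundedness constant in Lemma~\ref{lem:self-bounded}. The bounded-difference constant for swapping two clusters depends on the pseudo-outcome ranges. The key substitution is that all pseudo outcomes $Y^*_j(k)$ lie in a common interval of length $\frac{N}{n}\nb(b-a)$, namely $[\frac Nn\nb(a-m),\frac Nn\nb(b-m)]$ after shifting appropriately. The concern is whether a common interval is needed, since the clusters have different $n_j$. With $m$ the midpoint, $Y^*_j(k)\in[-\frac{N}{n}n_j\frac{b-a}{2},\frac Nn n_j\frac{b-a}2]\subseteq[-\frac Nn\nb\frac{b-a}{2},\frac Nn\nb\frac{b-a}{2}]$. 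So Assumption~\ref{ass:bounded} holds for the pseudo experiment with $b-a$ replaced by $\frac Nn\nb(b-a)$.

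Proposition~\ref{prop:cre-bernstein} then applies verbatim with $n\to N$, $n_k\to N_k$ and $\vhn\to\vhn^*$. The lower-order term becomes $\frac{C(N_0,N_1)}{N}\frac{N}{n}\nb(b-a)\log(3/\alpha)=\frac{C(N_0,N_1)\nb(b-a)}{n}\log(3/\alpha)$, as claimed.

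\textbf{Main obstacle.} The step I expect to need care is the Hoeffding constant. A naive black-box use of Proposition~\ref{prop:cre-hoeffding} produces the factor $\frac{1}{\pi^*(1-\pi^*)}=\frac{N^2}{N_0N_1}$ times the range $\frac Nn\nb$, so I must check the bookkeeping of these powers of $N$ against the stated formula. The symmetric-interval reduction above makes every other step routine.
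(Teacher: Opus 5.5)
Your proposal is correct and follows the paper's argument. The paper views the CCRE as a $\cre(N_0,N_1)$ on the pseudo outcomes $Y^*_j(k)$, notes that with the midpoint $m=(a+b)/2$ they all lie in a common interval of length $\frac{N}{n}\nb(b-a)$, and substitutes this range (with $n\to N$, $n_k\to N_k$) into Propositions~\ref{prop:cre-hoeffding} and~\ref{prop:cre-bernstein}; your recomputation of $\lVert w\rVert_2$ for the Hoeffding part is that substitution written out, and it confirms the constant $\frac{N^3\nb}{N_0N_1 n}$.
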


\subsection{Stratified Randomized Experiments} The vanilla CRE ensures that the number of units assigned to treatment and control are fixed across the whole study population. Within strata (subgroups defined in terms of the covariates), the allocation may still be unbalanced which may reduce efficiency, particularly when the treatment effect is heterogeneous across strata. A popular approach to rule out lopsided treatment assignments, is the stratified completely randomized experiment (SCRE), which conducts independent CREs for each stratum.

The corresponding treatment assignment distribution for $J$ strata is denoted $Z \sim \mathrm{SCRE}(n_{1,0}, n_{1,1};\ldots;n_{J,0},n_{J,1})$ and defined as $Z:=(Z^*_{1}, \ldots,Z^*_{J})$, where $Z^*_{j}\sim \cre(n_{j,0},n_{j,1})$ for $j \in \{1,\ldots,J\}$ and the $Z^*_j$ are jointly independent. Accordingly, $n_j=n_{j,0}+n_{j,1}$ is the number of units in the $j$-th stratum, $n = \sum_{j=1}^J n_j$ is the total number of units and $s_j = n_j/n$ is the relative size of stratum $j$. The matched-pair design in Section~\ref{sec:matched-pairs} is a special case of this experiment because each pair is a stratum with two units. Constructing confidence intervals for a general SCRE proceeds differently, though, because we can estimate the variance within a stratum if at least two units are assigned to treatment and control, respectively.

We again adopt double indexing where $Z_{ji}$, and $Y_{ji}(0)$ and $Y_{ji}(1)$ denote the treatment and potential outcomes of the $i$-th unit in the $j$-the stratum, respectively, where $i\in\{1,\ldots,n_j\}$ and $j\in\{1,\ldots,J\}$. Since a completely randomized experiment is conducted within each cluster, all $m$-centred HT estimators are equal and we can express them as a weighted sum of the stratum-specific HT estimators:
\begin{align*}
    \tauh_n &= \frac{1}{n} \sum_{j=1}^J \sum_{i=1}^{n_j} Y_{ji}(1) \frac{Z_{ji}}{n_{j,1}/n_j} - Y_{ji}(0) \frac{1-Z_{ji}}{n_{j,0}/n_j}= \sum_{j=1}^J \frac{s_j}{n_j} \sum_{i=1}^{n_j} Y_{ji}(1) \frac{Z_{ji}^*}{n_{j,1}/n_j} - Y_{ji}(0) \frac{1-Z_{ji}^*}{n_{j,0}/n_j}\\
    &=: \sum_{j=1}^J s_j \, \tauh_{n_j}^j.
\end{align*}
Since the individual estimators $\tauh_{n_j}^j$ are independent, the variance of $\tauh_n$ is also a weighted sum of the stratum-level variances. Therefore, the natural extension of Neyman's variance estimator to SCREs is given by $\vhns := \sum_{j=1}^J s_j^2\, \vhn^j$, where $\vhn^j$ denotes the variance estimator in the $j$-th stratum. More details can be found in \citet{ding_first_2024}[chap.~5.3], for instance.

Since the CREs in the different clusters are independent, the moment generating function of $\tauh_n-\tau_n$ factorizes and we can ``aggregate'' the results of Sections~\ref{sec:cre-hoeffding} and~\ref{sec:bernstein} which apply to individual strata. For brevity, we only state the resulting Hoeffding and (explicit) Bernstein confidence intervals. The proof is deferred to Appendix~\ref{app:scre}.

\begin{proposition}\label{prop:scre}
    Suppose $Z \sim \mathrm{SCRE}(n_{1,0}, n_{1,1};\ldots; n_{J,0}, n_{J,1})$, where $n_{j,0},n_{j,1}\geq 2$ for all $j\in\{1,\ldots,J\}$, and let $\alpha \in (0,1)$. Let the $\epsilon'_{n_j}$ be defined as in Proposition~\ref{prop:cre-hoeffding}, and $\epsilon_{n_j}''$ as in Proposition~\ref{prop:cre-bernstein}. Then, a $1-\alpha$ Hoeffding confidence interval for $\tau_n$ is given by
    \begin{equation*}
        \left[\tauh_n \pm \frac{b-a}{2}\sqrt{\sum_{j=1}^J\frac{n_j^5}{n\,n_{j,0}^2n_{j,1}^2}(1+\epsilon_{n_j}')}\sqrt{\frac{\log(2/\alpha)}{2n}}\right].
    \end{equation*}
    Furthermore, an explicit $1-\alpha$ Bernstein confidence interval is given by
    \begin{equation*}
        \left[\tauh_n \pm \left(\sqrt{2 \max_j\left\{\frac{(1+\epsilon_{n_j}')(1+\epsilon_{n_j}'')}{n_j}\right\} \vhns \log(3/\alpha)} + \frac{C^\mathrm{S}(b-a)}{n}\log(3/\alpha)\right)\right],
    \end{equation*}
    where the constant is defined as
    \begin{multline*}
        C^\mathrm{S} := \frac{1}{3} \max_j \left\{\frac{(1+\epsilon_{n_j}')n(n_j-1)}{n_{j,0}\,n_{j,1}}\right\}\\
        + \sqrt{2\max_j\left\{\frac{(1+\epsilon_{n_j}')(1+\epsilon_{n_j}'')\,n}{n_j}\right\}} \,\,\max_{j} \left\{\frac{n_j}{n}\sqrt{c(n_{j,0},n_{j,1})}\right\}.
    \end{multline*}
\end{proposition}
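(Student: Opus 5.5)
The plan is to exploit that $\tauh_n-\tau_n=\sum_{j=1}^J s_j(\tauh^j_{n_j}-\tau^j_{n_j})$ is a weighted sum of independent stratum-level errors. Rather than union-bounding over strata, which would introduce a $\log(J)$ factor, we bound the moment generating function of each stratum's error and multiply the bounds. The first step is to go back to the proofs of Proposition~\ref{prop:cre-hoeffding} and of the Barber-type Bernstein/Bennett inequality (Theorem~\ref{thm:rina-bernstein} in the appendix). These proofs go through MGF bounds, and we extract them in the form
\begin{equation*}
\E\big[e^{\lambda(\tauh^j_{n_j}-\tau^j)}\big]\le \exp\!\Big(\tfrac{\lambda^2}{8}(1+\epsilon'_{n_j})\lVert w^j\rVert_2^2\Big)
\end{equation*}
for the Hoeffding case, where $\lVert w^j\rVert_2\le \frac{b-a}{2}\frac{n_j}{n_{j,0}n_{j,1}}\sqrt{n_j}$ with the midpoint centring as before. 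For the Bernstein case we want the analogous sub-gamma bound $\exp\big(\frac{\lambda^2 v_j/2}{1-\lambda c_j}\big)$, with variance proxy $v_j\approx(1+\epsilon'_{n_j})(1+\epsilon''_{n_j})V_j/n_j$ and scale $c_j\approx\frac{(1+\epsilon'_{n_j})(n_j-1)(b-a)}{3n_{j,0}n_{j,1}}$.

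In the Hoeffding case, multiplying the MGFs with weights $s_j$ gives a sub-Gaussian variance proxy $\sum_j s_j^2(1+\epsilon'_{n_j})\lVert w^j\rVert^2/4$. Substituting $s_j=n_j/n$ yields exactly $\frac{(b-a)^2}{4n^2}\sum_j\frac{n_j^5}{n_{j,0}^2n_{j,1}^2}(1+\epsilon'_{n_j})$, and the Chernoff bound on both tails then produces the stated Hoeffding interval.

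For the Bernstein interval, the sum has variance proxy $\sum_j s_j^2v_j\le \max_j\{(1+\epsilon'_{n_j})(1+\epsilon''_{n_j})/n_j\}\sum_j s_j^2V_j$ and scale $\max_j s_jc_j$. Write $\max_j s_jc_j=\frac{b-a}{3n}\max_j\frac{(1+\epsilon')n(n_j-1)}{n_{j,0}n_{j,1}}\cdot\frac{n_j}{n}$, and bound $n_j/n\le1$ so that it matches $C^{\mathrm S}$. Bernstein's inequality with error probability $2\alpha/3$ then gives a deviation bound $\sqrt{2\max_j\{\cdot\}\,V^{\mathrm S}\log(3/\alpha)}+\frac{(b-a)}{3n}\max_j\{\cdot\}\log(3/\alpha)$, where $V^{\mathrm S}=\sum_j s_j^2V_j$.

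The remaining, and main, obstacle is to replace $V^{\mathrm S}$ by $\vhns$ using one-sided concentration with probability $\alpha/3$. Theorem~\ref{thm:neyman-conc} is proved via a modified log-Sobolev/Herbst argument, which yields a sub-Gaussian lower-tail MGF bound for each $\vhn^j$. Concretely, the self-bounded function $\vhn^j$ satisfies $\log\E e^{-\lambda(\vhn^j-\E\vhn^j)}\le \lambda^2 \E[\vhn^j](b-a)^2c_j/(n_j)$ in appropriate form. Tensorizing over independent strata with weights $s_j^2$ gives a self-bounding-type lower tail for $\vhns$ with $\E[\vhns]$ as scale and constant $\max_j s_j^2c(n_{j,0},n_{j,1})/n_j\cdot(b-a)^2\cdot n/s_j$. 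The aim is to reach $\sqrt{V^{\mathrm S}}\le\sqrt{\vhns}+(b-a)\max_j\{\frac{n_j}{n}\sqrt{c(n_{j,0},n_{j,1})}\}\sqrt{\log(3/\alpha)}/\sqrt{n}\cdot(\ldots)$. The delicate point is to check that the Herbst step, which solves a quadratic in $\sqrt{\E\vhn}$, still closes when the weights differ across strata; the plan is to bound every stratum's constant by the common maximum before solving the quadratic. Plugging this into the square-root term and applying $\sqrt{x}\,(\sqrt{u}+r)$ linearly produces the second summand of $C^{\mathrm S}$, and a union bound over the two events completes the proof.
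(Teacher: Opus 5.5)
Your route is the paper's route. You take per-stratum MGF bounds from Theorems~\ref{prop:rina-hoeffding} and~\ref{thm:rina-bernstein} and multiply them over independent strata with $\lambda_j=s_j\lambda$, bounding $n_j/n\le 1$ in the scale term. You take the per-stratum lower-tail log-MGF bound for $\vhn^j$ from the proof of Theorem~\ref{thm:neyman-conc} and combine it with $\lambda_j=s_j^2\lambda$. You finish with a union bound at $\alpha/3$ per tail.

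The Hoeffding and oracle-Bernstein parts are fine, up to one slip. The sub-Gaussian proxy is $\frac{(b-a)^2}{16n^2}\sum_j\frac{n_j^5}{n_{j,0}^2n_{j,1}^2}(1+\epsilon'_{n_j})$, not $\frac{(b-a)^2}{4n^2}\sum_j(\cdots)$. Your own expression $\frac14\sum_j s_j^2(1+\epsilon'_{n_j})\lVert w^j\rVert_2^2$ gives the former, and only the former yields the stated width.

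The step you call delicate is in fact harmless. Keep the sub-gamma form $\log\E\big[e^{\lambda(\E[\vhn^j]-\vhn^j)}\big]\le\lambda^2c_j\E[\vhn^j]/(1+c_j\lambda)$ for $\lambda\ge 0$, with $c_j=(b-a)^2c(n_{j,0},n_{j,1})/n_j$. Since $x\mapsto\lambda^2x/(1+\lambda x)$ is increasing, the product over strata is at most $\exp\big(\lambda^2c^*\E[\vhns]/(1+\lambda c^*)\big)$ with $c^*=\max_j s_j^2c_j$. The rest of the proof of Theorem~\ref{thm:neyman-conc} then runs verbatim and gives $\E[\vhns]^{1/2}\le(\vhns)^{1/2}+\sqrt{c^*\log(1/\delta)}$. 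If you use only the sub-Gaussian relaxation and solve the quadratic, you get the weaker $\sqrt{c^*\log(1/\delta)}+\sqrt{\vhns+c^*\log(1/\delta)}$.

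The genuine gap is the constant behind your `$(\ldots)$'. One has
$\sqrt{c^*}=(b-a)\max_j\big\{\frac{n_j}{n}\sqrt{c(n_{j,0},n_{j,1})/n_j}\big\}=\frac{b-a}{\sqrt n}\max_j\big\{\sqrt{\tfrac{n_j}{n}\,c(n_{j,0},n_{j,1})}\big\}$.
For $J\ge 2$ this is strictly larger than the $\frac{b-a}{\sqrt n}\max_j\{\frac{n_j}{n}\sqrt{c(n_{j,0},n_{j,1})}\}$ you are aiming for. Your expression $\max_j s_j^2c(n_{j,0},n_{j,1})/n_j\cdot(b-a)^2\cdot n/s_j$ simplifies to $(b-a)^2\max_j c(n_{j,0},n_{j,1})$, which is not right either.

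The paper's proof takes exactly this step and then displays the smaller constant in its bound on $\E[\vhns]^{1/2}$. In effect it replaces $1/\sqrt{n_j}$ by $1/\sqrt{n}$, which goes the wrong way. So neither your argument nor the paper's establishes the second summand of $C^{\mathrm S}$ as stated.

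This cannot be repaired by bookkeeping. Take $J$ equal strata with non-degenerate outcomes, say $Y(0)\equiv 0$ and binary $Y(1)$ with the same fraction $p\neq 1/2$ of ones in every stratum. Then $(\vhns)^{1/2}$ fluctuates on the scale $(b-a)/\sqrt{Jn}$, while the targeted deviation is of order $(b-a)/(J\sqrt n)$. So that intermediate inequality fails once $J$ is large.

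What the argument does prove is the Bernstein interval with $\max_j\{\frac{n_j}{n}\sqrt{c(n_{j,0},n_{j,1})}\}$ replaced by $\max_j\{\sqrt{\frac{n_j}{n}c(n_{j,0},n_{j,1})}\}$ in $C^{\mathrm S}$. Reinstating the factor $n_j/n$ you dropped from the scale term does not recover the stated constant in general. For two equal strata with $n_{j,1}=2$ and $n_{j,0}$ large, the corrected constant still exceeds $C^{\mathrm S}$.
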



\section{Discussion}\label{sec:discussion}
In this article, we investigated a non-asymptotic approach to constructing confidence intervals for the SATE under the most common experimental designs. Our key technical contributions are the improved concentration inequality for independent random variables, cf.\ Theorem~\ref{thm:mine-bernstein}, and the first concentration inequality for Neyman's variance estimator, cf.\ Theorem~\ref{thm:neyman-conc}. Since we aim to construct as tight intervals as possible, we made efforts to avoid using loose upper bounds on constants (including in terms of lower order) and to provide the implicit Bennett intervals along the explicit Bernstein intervals.

We compared the empirical performance of different Hoeffding and Bernstein-type confidence intervals and found substantial differences, in some cases. In practice, one can simply choose the shortest of the confidence intervals because all of them are centred at the HT estimator. As seen in Figure~\ref{fig:cre-bernstein}, in a completely randomized experiment, our non-asymptotic confidence intervals are considerably wider than the asymptotic Wald interval, even for moderately large samples sizes. In order to remedy this undesirable feature, more sophisticated concentration inequalities for dependent random variables are required which may be an interesting endeavour for probability theorists, as well.

This work could be extended in several directions, both in terms of estimation strategies and experimental designs. In our article, we only considered the HT estimator since it is linear in the treatment assignment which facilitates deriving concentration inequalities. Similar non-asymptotic confidence intervals based on \citet{hajek_1958,hajek_comment_1971}'s estimator would be desirable since it often has smaller variance, albeit being slightly biased. Furthermore, more recent work has used covariate information in the estimation to increase efficiency. \citet{freedman_regression_2008} and \citet{lin_agnostic_2013} use linear regression adjustment, and \citet{guo_generalized_2023} and \citet{cohen_no-harm_2024} extend this idea to more flexible models. In addition, \citet{miratrix_adjusting_2013} integrate covariate information in the estimation via post-stratification. We are optimistic that our techniques can be extended to these estimation strategies as the covariates are treated as fixed quantities under the design-based paradigm.

Beyond the common experimental designs we addressed in this article, deriving non-asymptotic confidence intervals under different treatment assignment schemes would be an interesting extension. These could include (fractional) factorial designs \citep{dasgupta_causal_2015, pashley_causal_2023}, stepped wedge experiments \citep{ji_randomization_2017} or re-randomized designs \citet{morgan_rerandomization_2012}. The latter, in particular, could pose technical difficulties as the randomization distribution is only known implicitly.

Another increasingly common class of designs are adaptive experiments. Initially introduced in \citet{thompson_likelihood_1933}'s landmark paper, they have become a popular approach in economics \citep{kasy_adaptive_2021}, political science \citep{offer-westort_adaptive_2021} and medicine \citep{burnett_adding_2020}. This group of designs includes for instance response-adaptive randomization, multi-arm multi-stage trials, and enrichment designs; for an overview article and a book-length treatment, we refer the reader to \citet{pallmann_adaptive_2018} and \citet{rosenberger_randomization_2015}, respectively. \citet{howard_time-uniform_2021} and~\citet{waudby-smith_estimating_2024} use martingale concentration inequalities in their respective works to derive non-asymptotic confidence sequences. These yield valid confidence intervals even when the experiment is stopped at an arbitrary point. This flexibility comes at the cost of interpretation as the estimand, i.e.\ the sample average treatment effect, also depends on the stopping time under the random design paradigm. Therefore, we think that more research on a good notion of reference population could increase the practical appeal of the design-based approach in adaptive experiments.

\begin{acks}[Acknowledgments]
I would like to thank Ricardo Sandoval, Ian Waudby-Smith and Myrto Limnios for very insightful discussions and feedback on earlier versions that greatly helped to shape this article.
\end{acks}

\begin{funding}
TF was supported by the Swiss National Science Foundation under project grant 207436.
\end{funding}

\bibliographystyle{imsart-nameyear} 
\bibliography{bib}       

@article{barber_hoeffding_2024,
	title = {Hoeffding and {Bernstein} inequalities for weighted sums of exchangeable random variables},
	volume = {29},
	journal = {Electronic Communications in Probability},
	author = {Barber, Rina Foygel},
	year = {2024},
	pages = {1--13}
}

@article{maurer_empirical_2009,
	title = {Empirical {Bernstein} {Bounds} and {Sample} {Variance} {Penalization}},
        journal = {arXiv: 0907.3740},
	author = {Maurer, Andreas and Pontil, Massimiliano},
	year = {2009}
}

@article{bardenet_concentration_2015,
	title = {Concentration inequalities for sampling without replacement},
	volume = {21},
	number = {3},
	journal = {Bernoulli},
	author = {Bardenet, Rémi and Maillard, Odalric-Ambrym},
	year = {2015},
	pages = {1361--1385}
}

@article{bertail_bernstein-type_2019,
	title = {Bernstein-type exponential inequalities in survey sampling: {Conditional} {Poisson} sampling schemes},
	volume = {25},
	number = {4B},
	journal = {Bernoulli},
	author = {Bertail, Patrice and Cl\'emen\c{c}on, Stephan},
	year = {2019},
	pages = {3527--3554}
}

@article{sambale_concentration_2022,
	title = {Concentration {Inequalities} on the {Multislice} and for {Sampling} {Without} {Replacement}},
	volume = {35},
	number = {4},
	journal = {Journal of Theoretical Probability},
	author = {Sambale, Holger and Sinulis, Arthur},
	year = {2022},
	pages = {2712--2737}
}

@book{boucheron_concentration_2013,
	title = {Concentration {Inequalities}: {A} {Nonasymptotic} {Theory} of {Independence}},
	publisher = {OUP Oxford},
	author = {Boucheron, Stéphane and Lugosi, Gábor and Massart, Pascal},
	year = {2013}
}

@article{adamczak_concentration_2021,
	title = {Concentration inequalities for some negatively dependent binary random variables},
	journal = {arXiv: 2108.12636},
	author = {Adamczak, Rados{\l}aw and Polaczyk, Bart{\l}omiej},
	year = {2021}
}

@inproceedings{waudby-smith_confidence_2020,
	title = {Confidence sequences for sampling without replacement},
	volume = {33},
	booktitle = {Advances in {Neural} {Information} {Processing} {Systems}},
	publisher = {Curran Associates, Inc.},
	author = {Waudby-Smith, Ian and Ramdas, Aaditya},
	year = {2020},
	pages = {20204--20214}
}

@article{splawa-neyman_application_1990,
	title = {On the {Application} of {Probability} {Theory} to {Agricultural} {Experiments}. {Essay} on {Principles}. {Section} 9},
	volume = {5},
	number = {4},
	journal = {Statistical Science},
	publisher = {Institute of Mathematical Statistics},
	author = {Sp{\l}awa-Neyman, Jerzy},
	year = {1923},
    note = {[translated from Polish by D.M. D{\k a}browska and T.P. Speed in 1990]},
	pages = {465--472}
}

@article{li_general_2017,
	title = {General {Forms} of {Finite} {Population} {Central} {Limit} {Theorems} with {Applications} to {Causal} {Inference}},
	volume = {112},
	number = {520},
	journal = {Journal of the American Statistical Association},
	author = {Li, Xinran and Ding, Peng},
	year = {2017},
	pages = {1759--1769}
}

@article{lee_logarithmic_1998,
	title = {Logarithmic {Sobolev} inequality for some models of random walks},
	volume = {26},
	number = {4},
	journal = {The Annals of Probability},
	publisher = {Institute of Mathematical Statistics},
	author = {Lee, Tzong-Yow and Yau, Horng-Tzer},
	year = {1998},
	pages = {1855--1873}
}

@article{salez_sharp_2021,
	title = {A sharp log-{Sobolev} inequality for the multislice},
	volume = {4},
	journal = {Annales Henri Lebesgue},
	author = {Salez, Justin},
	year = {2021},
	pages = {1143--1161}
}

@article{sandoval_nonasymptotic_2026,
	title = {On {Nonasymptotic} {Confidence} {Intervals} for {Treatment} {Effects} in {Randomized} {Experiments}},
    journal = {arXiv: 2601.11744},
	author = {Sandoval, Ricardo J. and Balakrishnan, Sivaraman and Feller, Avi and Jordan, Michael I. and Waudby-Smith, Ian},
	year = {2026}
}

@article{li_exact_2016,
	title = {Exact confidence intervals for the average causal effect on a binary outcome},
	volume = {35},
	number = {6},
	journal = {Statistics in Medicine},
	author = {Li, Xinran and Ding, Peng},
	year = {2016},
	pages = {957--960}
}

@article{rigdon_randomization_2015,
	title = {Randomization inference for treatment effects on a binary outcome},
	volume = {34},
	number = {6},
	journal = {Statistics in Medicine},
	author = {Rigdon, Joseph and Hudgens, Michael G.},
	year = {2015},
	pages = {924--935}
}

@article{fan_exponential_2015,
	title = {Exponential inequalities for martingales with applications},
	volume = {20},
	number = {},
	journal = {Electronic Journal of Probability},
	publisher = {Institute of Mathematical Statistics and Bernoulli Society},
	author = {Fan, Xiequan and Grama, Ion and Liu, Quansheng},
	year = {2015},
	pages = {1--22}
}

@article{howard_time-uniform_2021,
	title = {Time-{Uniform}, {Nonparametric}, {Nonasymptotic} {Confidence} {Sequences}},
	volume = {49},
	number = {2},
	journal = {The Annals of Statistics},
	publisher = {Institute of Mathematical Statistics},
	author = {Howard, Steven R. and Ramdas, Aaditya and McAuliffe, Jon and Sekhon, Jasjeet},
	year = {2021},
	pages = {1055--1080}
}

@article{hoeffding_probability_1963,
	title = {Probability {Inequalities} for {Sums} of {Bounded} {Random} {Variables}},
	volume = {58},
	number = {301},
	journal = {Journal of the American Statistical Association},
	author = {Hoeffding, Wassily},
	year = {1963},
	pages = {13--30}
}

@article{horvitz_generalization_1952,
	title = {A {Generalization} of {Sampling} {Without} {Replacement} from a {Finite} {Universe}},
	volume = {47},
	number = {260},
	journal = {Journal of the American Statistical Association},
	author = {Horvitz, D. G. and Thompson, D. J.},
	year = {1952},
	pages = {663--685}
}

@article{miratrix_worth_2018,
	title = {Worth {Weighting}? {How} to {Think} {About} and {Use} {Weights} in {Survey} {Experiments}},
	volume = {26},
	number = {3},
	journal = {Political Analysis},
	author = {Miratrix, Luke W. and Sekhon, Jasjeet S. and Theodoridis, Alexander G. and Campos, Luis F.},
	year = {2018},
	pages = {275--291}
}

@article{abadie_samplingbased_2020,
	title = {Sampling‐{Based} versus {Design}‐{Based} {Uncertainty} in {Regression} {Analysis}},
	volume = {88},
	number = {1},
	journal = {Econometrica},
	author = {Abadie, Alberto and Athey, Susan and Imbens, Guido W. and Wooldridge, Jeffrey M.},
	year = {2020},
	pages = {265--296}
}

@article{manski_how_2018,
	title = {How {Do} {Right}-to-{Carry} {Laws} {Affect} {Crime} {Rates}? {Coping} with {Ambiguity} {Using} {Bounded}-{Variation} {Assumptions}},
	volume = {100},
	number = {2},
	journal = {Review of Economics and Statistics},
	author = {Manski, Charles F. and Pepper, John V.},
	year = {2018},
	pages = {232--244}
}

@article{pitman_significance_1937,
	title = {Significance {Tests} {Which} {May} be {Applied} to {Samples} from {Any} {Populations}},
	volume = {4},
	number = {1},
	journal = {Journal of the Royal Statistical Society Series B: Statistical Methodology},
	author = {Pitman, E. J. G.},
	year = {1937},
	pages = {119--130}
}

@article{neyman_two_1934,
	title = {On the {Two} {Different} {Aspects} of the {Representative} {Method}: {The} {Method} of {Stratified} {Sampling} and the {Method} of {Purposive} {Selection}},
	volume = {97},
	number = {4},
	journal = {Journal of the Royal Statistical Society},
	author = {Neyman, Jerzy},
	year = {1934},
	pages = {558}
}

@book{cochran_sampling_1977,
	series = {Wiley {Ser}. {Probab}. {Math}. {Stat}.},
	title = {Sampling techniques},
    edition = {3rd},
	publisher = {John Wiley \& Sons, Hoboken, NJ},
	author = {Cochran, William G.},
	year = {1977}
}

@book{ding_first_2024,
	address = {New York},
	title = {A {First} {Course} in {Causal} {Inference}},
	publisher = {Chapman and Hall/CRC},
	author = {Ding, Peng},
	year = {2024}
}

@article{zhang_2023_randomization_test,
	title = {What is a {Randomization} {Test}?},
	volume = {0},
	number = {0},
	journal = {Journal of the American Statistical Association},
	author = {Zhang, Yao and Zhao, Qingyuan},
	year = {2023},
	pages = {1--15},
}

@book{imbens_causal_2015,
	address = {Cambridge},
	title = {Causal {Inference} for {Statistics}, {Social}, and {Biomedical} {Sciences}: {An} {Introduction}},
	publisher = {Cambridge University Press},
	author = {Imbens, Guido W. and Rubin, Donald B.},
	year = {2015}
}

@article{hajek_1960,
 author = {H\'ajek, Jaroslav},
 title = {Limiting distributions in simple random sampling from a finite population},
 journal = {Publications of the Mathematical Institute of the Hungarian Academy of Sciences, Series A},
 volume = {5},
 pages = {361--374},
 year = {1960}
}

@article{ding_bridging_2017,
	title = {Bridging {Finite} and {Super} {Population} {Causal} {Inference}},
	volume = {5},
	number = {2},
	journal = {Journal of Causal Inference},
	publisher = {De Gruyter},
	author = {Ding, Peng and Li, Xinran and Miratrix, Luke W.},
	year = {2017}
}

@article{imai_misunderstandings_2008,
	title = {Misunderstandings {Between} {Experimentalists} and {Observationalists} about {Causal} {Inference}},
	volume = {171},
	number = {2},
	journal = {Journal of the Royal Statistical Society Series A: Statistics in Society},
	author = {Imai, Kosuke and King, Gary and Stuart, Elizabeth A.},
	year = {2008},
	pages = {481--502}
}

@article{shi_berryesseen_2026,
	title = {Berry–{Esseen} bounds for design-based causal inference with possibly diverging treatment levels and varying group sizes},
	volume = {54},
	number = {1},
	journal = {The Annals of Statistics},
	publisher = {Institute of Mathematical Statistics},
	author = {Shi, Lei and Ding, Peng},
	year = {2026},
	pages = {324--349}
}

@article{yang_rejective_2023,
	title = {Rejective {Sampling}, {Rerandomization}, and {Regression} {Adjustment} in {Survey} {Experiments}},
	volume = {118},
	number = {542},
	journal = {Journal of the American Statistical Association},
	author = {Yang, Zihao and Qu, Tianyi and Li, Xinran},
	year = {2023},
	pages = {1207--1221}
}

@book{sarndal_1992,
 author = {S{\"a}rndal, Carl-Erik and Swensson, Bengt and Wretman, Jan},
 title = {Model assisted survey sampling},
 series = {Springer Series in Statistics},
 year = {1992},
 publisher = {New York etc.: Springer-Verlag}
}

@article{ding_what_2025,
	title = {What randomization can and cannot guarantee},
	volume = {11},
	number = {1},
	journal = {Observational studies},
	author = {Ding, Peng},
    year = {2025},
	pages = {27--40}
}

@article{aronow_nonparametric_2025,
	title = {Nonparametric identification is not enough, but randomized controlled trials are},
    year = {2025},
	volume = {11},
	number = {1},
	journal = {Observational studies},
	author = {Aronow, P. M. and Robins, James M. and Saarinen, Theo and S\"avje, Fredrik and Sekhon, Jasjeet S.},
	pages = {3--16},
}

@book{lehmann_2006,
 author = {Lehmann, Erich L.},
 title = {Nonparametrics: {Statistical} methods based on ranks},
 edition = {2nd},
 year = {2006},
 publisher = {New York, NY: Springer}
}

@book{fisher_design_1935,
	address = {Edinburgh},
	title = {The design of experiments},
	publisher = {Oliver \& Boyd},
	author = {Fisher, R. A.},
	year = {1935}
}

@article{ernst_permutation_2004,
	title = {Permutation {Methods}: {A} {Basis} for {Exact} {Inference}},
	volume = {19},
	number = {4},
	journal = {Statistical Science},
	author = {Ernst, Michael D.},
	year = {2004},
	pages = {676--685}
}

@article{kempthorne_behaviour_1969,
	title = {The behaviour of some significance tests under experimental randomization},
	volume = {56},
	number = {2},
	journal = {Biometrika},
	author = {Kempthorne, Oscar and Doerfler, T. E.},
	year = {1969},
	pages = {231--248}
}

@article{mercer_theory_2017,
	title = {Theory and {Practice} in {Nonprobability} {Surveys}: {Parallels} between {Causal} {Inference} and {Survey} {Inference}},
	volume = {81},
	number = {S1},
	journal = {Public Opinion Quarterly},
	author = {Mercer, Andrew W. and Kreuter, Frauke and Keeter, Scott and Stuart, Elizabeth A.},
	year = {2017},
	pages = {250--271}
}

@article{srndal_design-based_1978,
	title = {Design-{Based} and {Model}-{Based} {Inference} in {Survey} {Sampling} [with {Discussion} and {Reply}]},
	volume = {5},
	number = {1},
	journal = {Scandinavian Journal of Statistics},
	author = {S\"arndal, Carl-Erik and Thomsen, Ib and Hoem, Jan M. and Lindley, D. V. and Barndorff-Nielsen, O. and Dalenius, Tore},
	year = {1978},
	pages = {27--52}
}

@article{serfling_probability_1974,
	title = {Probability {Inequalities} for the {Sum} in {Sampling} without {Replacement}},
	volume = {2},
	number = {1},
	journal = {The Annals of Statistics},
	author = {Serfling, R. J.},
	year = {1974},
	pages = {39--48},
}

@article{waudby-smith_estimating_2024,
	title = {Estimating means of bounded random variables by betting},
	volume = {86},
	number = {1},
	journal = {Journal of the Royal Statistical Society Series B: Statistical Methodology},
	author = {Waudby-Smith, Ian and Ramdas, Aaditya},
	year = {2024},
	pages = {1--27}
}

@article{ramdas_game-theoretic_2023,
	title = {Game-{Theoretic} {Statistics} and {Safe} {Anytime}-{Valid} {Inference}},
	volume = {38},
	number = {4},
	journal = {Statistical Science},
	author = {Ramdas, Aaditya and Grünwald, Peter and Vovk, Vladimir and Shafer, Glenn},
	year = {2023}
}

@article{hajek_1958,
 author = {H{\'a}jek, Jaroslav},
 title = {On the theory of ratio estimates},
 journal = {Applications of Mathematics},
 volume = {3},
 pages = {384--398},
 year = {1958}
}

@book{hajek_comment_1971,
    title = {Comment on ‘Foundations of Statistical Inference: An Essay on the Logical Foundations of Survey Sampling, Part I’ by D. Basu},
    publisher = {Holt, Rinehart and Winston, Toronto, Canada},
    author = {H{\'a}jek, Jaroslav},
	year = {1971},
	pages = {236},
}

@article{rubin_estimating_1974,
	title = {Estimating causal effects of treatments in randomized and nonrandomized studies},
	volume = {66},
	number = {5},
	journal = {Journal of Educational Psychology},
	publisher = {American Psychological Association},
	author = {Rubin, Donald B.},
	year = {1974},
	pages = {688--701}
}

@book{hernan2020causal,
  author    = {Hernán, Miguel A. and Robins, James M.},
  title     = {Causal Inference: What If},
  publisher = {Chapman \& Hall/CRC},
  year      = {2020},
  address   = {Boca Raton}
}

@article{bernstein1924,
  author  = {Bernstein, S. N.},
  title   = {On a modification of {Chebyshev}'s inequality
             and on the error in {Laplace}'s formula},
  journal = {Uchen. Zapiski Nauchn.-issled. Kafedr Ukrainy, Otdel. Mat., vyp. 1},
  number  = {4},
  pages   = {38--49},
  year    = {1924},
  note    = {In Russian}
}

@book{bernstein1946,
  author    = {Bernstein, S. N.},
  title     = {Theory of Probability},
  edition   = {4th enlarged},
  publisher = {Gostekhizdat},
  address   = {Moscow--Leningrad},
  year      = {1946},
  note      = {In Russian}
}

@book{de_la_pena_self-normalized_2009,
	address = {Berlin, Heidelberg},
	series = {Probability and its {Applications}},
	title = {Self-{Normalized} {Processes}},
	publisher = {Springer},
	author = {de la Peña, Victor H. and Lai, Tze Leung and Shao, Qi-Man},
	editor = {Gani, Joe and Heyde, C. C. and Jagers, P. and Kurtz, T. G.},
	year = {2009}
}

@article{aronow_class_2013,
	title = {A {Class} of {Unbiased} {Estimators} of the {Average} {Treatment} {Effect} in {Randomized} {Experiments}},
	volume = {1},
	number = {1},
	journal = {Journal of Causal Inference},
	author = {Aronow, Peter M. and Middleton, Joel A.},
	year = {2013},
	pages = {135--154}
}

@article{aronow_minimax_2026,
	title = {Minimax unbiased estimation for finite populations with bounded outcomes},
    journal = {arXiv: 2605.20572},
	author = {Aronow, P. M. and Lopatto, Patrick},
	year = {2026}
}

@article{bennett_1962,
author = {George Bennett},
title = {Probability Inequalities for the Sum of Independent Random Variables},
journal = {Journal of the American Statistical Association},
volume = {57},
number = {297},
pages = {33--45},
year = {1962}
}

@article{greene_exponential_2017,
	title = {Exponential bounds for the hypergeometric distribution},
	volume = {23},
	number = {3},
	journal = {Bernoulli},
	author = {Greene, Evan and Wellner, Jon A.},
	year = {2017},
    pages = {1911--1950}
}

@article{su_model-assisted_2021,
	title = {Model-{Assisted} {Analyses} of {Cluster}-{Randomized} {Experiments}},
	volume = {83},
	number = {5},
	journal = {Journal of the Royal Statistical Society Series B: Statistical Methodology},
	author = {Su, Fangzhou and Ding, Peng},
	year = {2021},
	pages = {994--1015}
}

@article{schochet_design-based_2022,
	title = {Design-{Based} {Ratio} {Estimators} and {Central} {Limit} {Theorems} for {Clustered}, {Blocked} {RCTs}},
	volume = {117},
	number = {540},
	journal = {Journal of the American Statistical Association},
	author = {Schochet, Peter Z. and Pashley, Nicole E. and Miratrix, Luke W. and Kautz, Tim},
	year = {2022},
	pages = {2135--2146}
}

@article{middleton_unbiased_2015,
	title = {Unbiased {Estimation} of the {Average} {Treatment} {Effect} in {Cluster}-{Randomized} {Experiments}},
	volume = {6},
	number = {1-2},
	journal = {Statistics, Politics and Policy},
	author = {Middleton, Joel A. and Aronow, Peter M.},
	year = {2015}
}

@article{freedman_regression_2008,
	title = {On regression adjustments to experimental data},
	volume = {40},
	number = {2},
	journal = {Advances in Applied Mathematics},
	author = {Freedman, David A.},
	year = {2008},
	pages = {180--193}
}

@article{miratrix_adjusting_2013,
	title = {Adjusting {Treatment} {Effect} {Estimates} by {Post}-{Stratification} in {Randomized} {Experiments}},
	volume = {75},
	number = {2},
	journal = {Journal of the Royal Statistical Society Series B: Statistical Methodology},
	author = {Miratrix, Luke W. and Sekhon, Jasjeet S. and Yu, Bin},
	year = {2013},
	pages = {369--396}
}

@article{lin_agnostic_2013,
	title = {Agnostic notes on regression adjustments to experimental data: {Reexamining} {Freedman}’s critique},
	volume = {7},
	number = {1},
	journal = {The Annals of Applied Statistics},
	publisher = {Institute of Mathematical Statistics},
	author = {Lin, Winston},
	year = {2013},
	pages = {295--318}
}

@article{morgan_rerandomization_2012,
	title = {Rerandomization to improve covariate balance in experiments},
	volume = {40},
	number = {2},
	journal = {The Annals of Statistics},
	author = {Morgan, Kari Lock and Rubin, Donald B.},
	year = {2012},
	pages = {1263--1282}
}

@article{cohen_no-harm_2024,
	title = {No-harm calibration for generalized {Oaxaca}–{Blinder} estimators},
	volume = {111},
	issn = {1464-3510},
	number = {1},
	journal = {Biometrika},
	author = {Cohen, P L and Fogarty, C B},
	year = {2024},
	pages = {331--338},
}

@article{guo_generalized_2023,
	title = {The {Generalized} {Oaxaca}-{Blinder} {Estimator}},
	volume = {118},
    number = {541},
	journal = {Journal of the American Statistical Association},
	publisher = {Taylor \& Francis},
	author = {Guo, Kevin and Basse, Guillaume},
	year = {2023},
	pages = {524--536},
}

@article{dasgupta_causal_2015,
	title = {Causal {Inference} from {2K} {Factorial} {Designs} by {Using} {Potential} {Outcomes}},
	volume = {77},
	number = {4},
	journal = {Journal of the Royal Statistical Society Series B: Statistical Methodology},
	author = {Dasgupta, Tirthankar and Pillai, Natesh S. and Rubin, Donald B.},
	year = {2015},
	pages = {727--753}
}

@article{pashley_causal_2023,
	title = {Causal inference for multiple treatments using fractional factorial designs},
	volume = {51},
	number = {2},
	journal = {Canadian Journal of Statistics},
	author = {Pashley, Nicole E. and Bind, Marie‐Abèle C.},
	year = {2023},
	pages = {444--468},
}

@article{ji_randomization_2017,
	title = {Randomization inference for stepped-wedge cluster-randomized trials: {An} application to community-based health insurance},
	volume = {11},
	issn = {1932-6157, 1941-7330},
	number = {1},
	journal = {The Annals of Applied Statistics},
	author = {Ji, Xinyao and Fink, Gunther and Robyn, Paul Jacob and Small, Dylan S.},
	year = {2017},
	pages = {1--20}
}

@article{kasy_adaptive_2021,
	title = {Adaptive {Treatment} {Assignment} in {Experiments} for {Policy} {Choice}},
	volume = {89},
	number = {1},
	journal = {Econometrica},
	author = {Kasy, Maximilian and Sautmann, Anja},
	year = {2021},
	pages = {113--132}
}

@article{offer-westort_adaptive_2021,
	title = {Adaptive {Experimental} {Design}: {Prospects} and {Applications} in {Political} {Science}},
	volume = {65},
	shorttitle = {Adaptive {Experimental} {Design}},
	number = {4},
	journal = {American Journal of Political Science},
	author = {Offer-Westort, Molly and Coppock, Alexander and Green, Donald P.},
	year = {2021},
	pages = {826--844}
}

@article{thompson_likelihood_1933,
	title = {On the likelihood that one unknown probability exceeds another in view of the evidence of two samples},
	volume = {25},
	number = {3-4},
	journal = {Biometrika},
	author = {Thompson, William R.},
	year = {1933},
	pages = {285--294}
}

@book{rosenberger_randomization_2015,
	title = {Randomization in {Clinical} {Trials}: {Theory} and {Practice}},
	shorttitle = {Randomization in {Clinical} {Trials}},
	publisher = {John Wiley \& Sons},
	author = {Rosenberger, William F. and Lachin, John M.},
	year = {2015}
}

@article{pallmann_adaptive_2018,
	title = {Adaptive designs in clinical trials: why use them, and how to run and report them},
	volume = {16},
	shorttitle = {Adaptive designs in clinical trials},
	number = {1},
	journal = {BMC Medicine},
	author = {Pallmann, Philip and Bedding, Alun W. and Choodari-Oskooei, Babak and Dimairo, Munyaradzi and Flight, Laura and Hampson, Lisa V. and Holmes, Jane and Mander, Adrian P. and Odondi, Lang’o and Sydes, Matthew R. and Villar, Sofía S. and Wason, James M. S. and Weir, Christopher J. and Wheeler, Graham M. and Yap, Christina and Jaki, Thomas},
	year = {2018},
	pages = {29}
}

@article{burnett_adding_2020,
	title = {Adding flexibility to clinical trial designs: an example-based guide to the practical use of adaptive designs},
	volume = {18},
    shorttitle = {Adding flexibility to clinical trial designs},
	number = {1},
	journal = {BMC Medicine},
	author = {Burnett, Thomas and Mozgunov, Pavel and Pallmann, Philip and Villar, Sofia S. and Wheeler, Graham M. and Jaki, Thomas},
	year = {2020},
	pages = {352},
}


\newpage
\begin{appendix}

\section{Extensions of Concentration Inequalities in the Literature}\label{app:extension-literature}

This section provides slight generalizations of concentration inequalities of~\cite{barber_hoeffding_2024} and~\cite{bardenet_concentration_2015}. This mostly concerns using an arbitrary support $[l,u]$, sharpening constants if possible and, for Bernstein inequalities, also proving the corresponding, tighter Bennett-type bounds.

We start with an extension of \cite{barber_hoeffding_2024}'s Hoeffding inequality stated in their theorem~3.1.
\begin{theorem}\label{prop:rina-hoeffding}
    Let $w \in \R^n$ and $X_1,\ldots,X_N \in [l,u]$ be exchangeable random variables, where $N \geq n \geq 2$. Define $\bar{X} := \frac{1}{N}\sum_{i=1}^N X_i$ and $\epsilon'_N := \frac{\epsilon_N-1}{N-\epsilon_N} = \mathcal{O}(\frac{\log(N)}{N})$, where $\epsilon_N = \sum_{j=1}^N \frac{1}{j}$. Then, for any $\lambda \in \R$,
    \begin{equation*}
        \E\left[\exp\left(\lambda \sum_{i=1}^n w_i (X_i - \bar{X}) \right)\right] \leq \exp\left(\frac{\lambda^2(u-l)^2}{8}\,\lVert w\rVert_2^2\,(1+\epsilon_N')\right).
    \end{equation*}
    Moreover, for any $\delta \in (0,1)$, with probability $1-\delta$,
    \begin{equation*}
        \left\vert\sum_{i=1}^n w_i (X_i - \bar{X})\right\vert  \leq \lVert w \rVert_2 (u-l) \sqrt{\frac{(1+\epsilon'_N)\log(2/\delta)}{2}}.
    \end{equation*}
\end{theorem}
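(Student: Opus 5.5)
The plan is to reduce the statement to Barber's original Theorem~3.1 by affine rescaling, and then obtain the tail bound with a standard Chernoff argument. Barber's result is stated for exchangeable variables in $[0,1]$ (or symmetric support). It gives an MGF bound of the form $\exp(\lambda^2\lVert w\rVert_2^2(1+\epsilon'_N)/8)$ for $\sum_{i=1}^n w_i(X_i-\bar X)$, where the correction $\epsilon'_N$ comes from her symmetrization over random permutations.

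The first step is to define $X_i' := (X_i-l)/(u-l) \in [0,1]$. These are again exchangeable, since an affine map applied coordinatewise preserves exchangeability. They also satisfy
\[
X_i - \bar X = (u-l)(X_i'-\bar X'),
\]
so the additive shift $l$ cancels because we centre at the empirical mean $\bar X$. Hence
\[
\lambda\sum_i w_i(X_i-\bar X) = \lambda(u-l)\sum_i w_i(X_i'-\bar X').
\]
Applying Barber's MGF bound with $\lambda' = \lambda(u-l)$ yields exactly the stated bound with the factor $(u-l)^2$.

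The one point to check is that Barber's constant, for $n\le N$ with the weights padded by zeros to length $N$, matches $1+\epsilon'_N$ with $\epsilon'_N=(\epsilon_N-1)/(N-\epsilon_N)$. If her statement is phrased in terms of a different but equivalent expression (e.g.\ $\frac{1}{1-\epsilon_N/N}$-type factors), I would verify the algebraic identity
\[
1+\epsilon'_N = \frac{N-1}{N-\epsilon_N}.
\]
If her bound is only stated for weights of length $N$, padding $w$ with zeros leaves $\lVert w\rVert_2$ unchanged, so it still applies. I expect this constant-matching to be the only real obstacle. Should her theorem be stated for $[-1,1]$-valued variables, I would map to $[-1/2,1/2]$ instead and use $(u-l)^2/8$ accordingly.

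For the high-probability bound, write $S := \sum_i w_i(X_i-\bar X)$. Markov's inequality on $e^{\lambda S}$ gives
\[
\PR(S\ge t)\le \exp\left(-\lambda t+\lambda^2(u-l)^2\lVert w\rVert_2^2(1+\epsilon'_N)/8\right).
\]
Optimizing at $\lambda = 4t/\big((u-l)^2\lVert w\rVert_2^2(1+\epsilon'_N)\big)$ yields
\[
\PR(S \ge t)\le \exp\left(-2t^2/\big((u-l)^2\lVert w\rVert_2^2(1+\epsilon'_N)\big)\right).
\]
The same bound holds for $-S$ by taking $\lambda<0$ (equivalently, replacing $w$ by $-w$). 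A union bound with each tail set to $\delta/2$ and solving for $t$ gives $t = \lVert w\rVert_2(u-l)\sqrt{(1+\epsilon'_N)\log(2/\delta)/2}$, as claimed.
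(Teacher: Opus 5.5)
\textbf{Verdict: correct, but by a different route than the paper.}

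\textbf{How the two routes differ.} The paper does not use Barber's Theorem~3.1 as a black box. It goes back to the supermartingale of \citet{waudby-smith_confidence_2020}, $\prod_{i\le t}\exp\bigl(\lambda v_i(X_i-\bar{X}_{\geq i})-\lambda^2(u-l)^2v_i^2/8\bigr)$, which is valid for general support $[l,u]$. It then re-runs Barber's symmetrization with the factor $\lambda^2/2$ replaced by $\lambda^2(u-l)^2/8$. You instead use the fact that $X_i-\bar{X}$ is shift-invariant and scales linearly, so an affine change of variables carries Barber's statement over to $[l,u]$.

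\textbf{What each route buys.} Your route is more elementary. It needs only her final statement, plus the zero-padding of $w$, which you handle. It loses nothing here because Barber's constant is already the sharp Hoeffding-lemma constant for her range. The paper's route does not depend on that fact. It is also the template the paper reuses for the Bernstein/Bennett extension in Theorem~\ref{thm:rina-bernstein}, where more than one scale-dependent quantity enters the bound. Your Chernoff and union-bound step for the tail inequality is standard and yields exactly the stated bound.

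\textbf{Correction to your hedge.} Barber's theorem is stated for $X_i\in[-1,1]$ with factor $\lambda^2/2$, as the paper's proof indicates; it is not stated for $[0,1]$ with $\lambda^2/8$. The right map is therefore to $[-1,1]$, not to $[-1/2,1/2]$. Set $X_i':=(2X_i-l-u)/(u-l)$, so that $X_i-\bar{X}=\tfrac{u-l}{2}(X_i'-\bar{X}')$. Then apply her bound with $\lambda'=\lambda(u-l)/2$, which gives $\lambda'^2/2=\lambda^2(u-l)^2/8$, as required. If you mapped into $[-1/2,1/2]$ and invoked her $[-1,1]$ statement, you would lose a factor of $4$ in the exponent. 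The reason is that her constant is tied to the nominal support $[-1,1]$, not to the actual range of the transformed variables.
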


\begin{proof}
    The Hoeffding bound is based on Proposition~A.1 in~\citet{barber_hoeffding_2024}. Tracing the more general, original proof of this result in~\citet{waudby-smith_confidence_2020}, we find that
    \begin{equation*}
        M_t = \prod_{i=1}^t \exp\left(\lambda v_i (X_i-\bar{X}_{\geq i})-\frac{\lambda^2 (u-l)^2 v_i^2 }{8}\right)
    \end{equation*}
    is a super-martingale for any $\lambda \in \R$. Now, the proposition immediately follows from replacing the factor $\frac{\lambda^2}{2}$ in the proof of~\citet{barber_hoeffding_2024}[Thm.~3.1] with $\frac{\lambda^2(u-l)^2}{8}$.
\end{proof}

Next, we generalize \cite{barber_hoeffding_2024}'s Bernstein inequality given in their theorem~3.3
\begin{theorem}\label{thm:rina-bernstein}
    Let $w \in \R^n$ and $X_1,\ldots,X_n \in [l,u]$ be exchangeable random variables, where $N \geq n \geq 2$. Define $\bar{X}$ and $\epsilon_N'$ as in Theorem~\ref{prop:rina-hoeffding} and set $\sigma_X^2:= \frac{1}{N}\sum_{i=1}^N(X_i-\bar{X})^2$. Then, for any $\lambda \in \R$ such that $\lvert \lambda \rvert < 3/((u-l)\lVert w \rVert_\infty (1+\epsilon_N'))$,
    \begin{equation*}
        \E\left[\exp\left(\lambda \sum_{i=1}^n w_i (X_i - \bar{X}) - \frac{\lambda^2(1+\epsilon_N')\,\sigt^2_{X,N} \lVert w\rVert_2^2}{2\big(1-\frac{(u-l)\lvert \lambda \rvert }{3} \lVert w\rVert_\infty (1+\epsilon_N')\big)} \right)\right] \leq 1,
    \end{equation*}
     where $\sigt_{X,N}^2 = \sigma^2_X+4\epsilon_N'$. Furthermore, for $\delta \in (0,1)$, with probability $1-\delta$,
    \begin{equation*}
        \left\vert\sum_{i=1}^n w_i (X_i - \bar{X})\right\vert \leq \sigt_{X,N} \lVert w\rVert_2 \sqrt{2(1+\epsilon'_N)\log(2/\delta)}\, + \frac{u-l}{3}\lVert w\rVert_\infty(1+\epsilon_N')\log(2/\delta).
    \end{equation*}
    Define $\phi(u) := e^u -u-1$. For any $\lambda >0$,
    \begin{equation*}
        \E\left[\exp\left(\lambda \sum_{i=1}^n w_i (X_i - \bar{X}) -  \frac{\phi\big((1+\epsilon'_N)\lambda(u-l)\lVert w \rVert_\infty\big)}{(u-l)^2\lVert w\rVert_\infty^2} \frac{\sigt^2_{X,N} \lVert w \rVert_2^2}{1+\epsilon_N'}\right)\right] \leq 1.
    \end{equation*}
    Lastly, for any $\delta \in (0,1)$, with probability $1-\delta$,
    \begin{equation*}
        \left\vert\sum_{i=1}^n w_i (X_i - \bar{X})\right\vert \leq \frac{\sigt^2_{X,N} \lVert w \rVert_2^2}{(u-l)\lVert w \rVert_\infty}\, h_2^{-1}\left(\frac{(1+\epsilon_N')(u-l)^2\lVert w\rVert^2_\infty\log(2/\delta)}{\sigt^2_{X,N}\lVert w \rVert_2^2}\right),
    \end{equation*}
    where $h_2(u) := (1+u)\log(1+u)-u$.
\end{theorem}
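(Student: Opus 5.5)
The plan is to follow the architecture of \citet{barber_hoeffding_2024}[Thm.~3.3], which goes through the sequential sampling-without-replacement super-martingale of \citet{waudby-smith_confidence_2020}, and to replace the $[0,1]$-specific constants by ones depending on $u-l$.

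\textbf{Reduction and sequential decomposition.} By exchangeability we may assume the $X_i$ arise by uniformly permuting a fixed multiset $x_1,\ldots,x_N\in[l,u]$. Write $\bar{X}_{\geq i}$ for the mean of the values not yet revealed before step $i$. As in the proof of Theorem~\ref{prop:rina-hoeffding}, Barber's Proposition~A.1 rewrites
\begin{equation*}
\sum_{i=1}^n w_i(X_i-\bar{X}) \quad\text{as}\quad \sum_{i=1}^n v_i\big(X_i-\bar{X}_{\geq i}\big),
\end{equation*}
with deterministic coefficients $v_i$. These satisfy $\sum_i v_i^2\le(1+\epsilon'_N)\lVert w\rVert_2^2$ and $\max_i\lvert v_i\rvert\le(1+\epsilon'_N)\lVert w\rVert_\infty$. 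Conditionally on the past, $X_i-\bar{X}_{\geq i}$ is centred and supported in an interval of length $u-l$. Its conditional variance is the variance of the remaining population, $\sigma^2_{\geq i}$.

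\textbf{One-step Bernstein/Bennett bound.} For a centred variable $D$ with $\lvert D\rvert\le u-l$, we use $\E e^{\mu D}\le\exp\!\big(\var(D)\,\phi(\lvert\mu\rvert(u-l))/(u-l)^2\big)$. For the Bernstein form we add $\phi(x)\le x^2/(2(1-x/3))$. With $\mu=\lambda v_i$, this makes
\begin{equation*}
\prod_{i\le t}\exp\!\Big(\lambda v_i(X_i-\bar{X}_{\geq i})-\sigma^2_{\geq i}\,\frac{\phi(\lambda\lvert v_i\rvert(u-l))}{(u-l)^2}\Big)
\end{equation*}
a super-martingale. The obstacle is that $\sigma^2_{\geq i}$ is random and not $\sigma_X^2$.

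\textbf{Controlling the running variances (the main step).} This follows Barber's argument for her Thm.~3.3. Her key bound is that the weighted aggregate $\sum_i v_i^2\sigma^2_{\geq i}$, after monotonicity of $\phi(x)/x^2$ lets $\phi$-terms be pulled out at $\lvert v_i\rvert\le\max_i\lvert v_i\rvert$, is dominated in the relevant sense by $(1+\epsilon'_N)\lVert w\rVert_2^2(\sigma^2_X+4\epsilon'_N)$. The additive $4\epsilon'_N$ absorbs the fluctuation of the remaining-population variance; the constant should scale like $(u-l)^2$. The statement writes $4\epsilon'_N$ without that factor, so I would either check that the stated form is intended for normalized data or insert the factor $(u-l)^2$.

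I expect this step to be the hardest, since it is where the paper claims sharpened constants. I would first try to reproduce Barber's inequality verbatim for $[0,1]$-valued data and then rescale via $X\mapsto (X-l)/(u-l)$. Under this rescaling the first two claims transform exactly, which reduces the general statement to hers plus the Bennett variant.

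\textbf{Tail bounds.} Taking expectations gives the two MGF statements. The Bennett one uses $\phi$ directly, with $\lvert v_i\rvert\le(1+\epsilon'_N)\lVert w\rVert_\infty$ and the $\phi(x)/x^2$ monotonicity giving the stated $1/(1+\epsilon'_N)$ normalization. Markov's inequality and optimization over $\lambda$ then give the Bernstein tail, via the standard inversion of $\lambda^2 v/(2(1-c\lambda))$, and the Bennett tail, via Legendre duality with $h_2$. A union bound over $\pm$ supplies the $\log(2/\delta)$.
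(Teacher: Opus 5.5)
Your architecture matches the paper's proof, which is itself only a constant-tracking modification of \citet{barber_hoeffding_2024}. Its ingredients are:
\begin{itemize}
\item the sequential supermartingale with $(u-l)$-dependent one-step constants;
\item Bennett's one-step bound, with $\phi(x)/x^2$ increasing, for the implicit version;
\item conditioning on the multiset so that $\sigma_X^2$ is fixed;
\item the usual Bernstein/Bennett inversion and a union bound over the two signs.
\end{itemize}
Your concern about $4\epsilon'_N$ is justified, and the paper's proof does not address it: it only replaces $2/3$ by $(u-l)/3$. Suppose the first display held on every interval with the fixed term $4\epsilon'_N$. Applying it to $cX$ and substituting $\mu=c\lambda$ gives the same display for $X$ with $4\epsilon'_N/c^2$. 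Letting $c\to\infty$ and using monotone convergence would then remove the correction altogether, which is not what Barber's argument delivers. So the term must scale like $(u-l)^2$. The factors $\lambda^2/2$ and $2/3$ that the paper swaps out suggest Barber normalizes to range $2$, in which case it should read $(u-l)^2\epsilon'_N$. This is harmless downstream: there $X_i=Z_i-\pi$ has $u-l=1$, so $4\epsilon'_N$ is merely conservative. For the same reason, rescale to Barber's own normalization rather than to $[0,1]$ if you want the stated constants.

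The genuine gap is in the reduction you attribute to Barber, and it carries your ``main step''. As the paper uses it, Barber's Proposition~A.1 is the supermartingale statement, not a rewriting with good coefficients.
\begin{itemize}
\item \emph{The coefficients are forced and violate your bounds.} For a fixed revealing order, the identity $\sum_i w_i(X_i-\xb)=\sum_i v_i(X_i-\xb_{\geq i})$ determines $v_j=\tilde w_j+\frac{1}{N-j}\sum_{k\le j}\tilde w_k$ for $j<N$, where $\tilde w=w-\bar w\one$ and $w$ is padded by zeros. For $n=N$ even and $w=(\one_{N/2},-\one_{N/2})$ one gets $v_{N/2}=2$, violating $\max_i\lvert v_i\rvert\le(1+\epsilon'_N)\lVert w\rVert_\infty$. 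For $w=e_1-e_N$ one gets $\lVert v\rVert_2^2\to 1+\sum_{m\ge 1}m^{-2}\approx 2.64$, whereas $(1+\epsilon'_N)\lVert w\rVert_2^2\to 2$.
\item \emph{The running variances cannot be controlled for a fixed order.} Take $w=e_N$, so $v_j=-1/(N-j)$. If the data are a permutation of $(1,0,\ldots,0)$ with the $1$ in position $N-1$, then $v_{N-1}^2\sigma^2_{\geq N-1}=1/4$. But $(1+\epsilon'_N)\lVert w\rVert_2^2\sigt^2_{X,N}=O(\log N/N)$. Domination ``in expectation'' does not help inside the exponential.
\end{itemize}

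The missing idea is Barber's symmetrization, the same device the paper uses in the proof of Theorem~\ref{thm:mine-bernstein}. By exchangeability, the supermartingale inequality holds for every relabelling $\Pi$ of the data, with coefficients $v_\Pi$ allowed to depend on $\Pi$. Average the \emph{exponents} over $\Pi$ and apply Jensen. The linear parts then average to a multiple of $\sum_i w_i(X_i-\xb)$. The factor $1+\epsilon'_N=(N-1)/(N-\epsilon_N)$ is exactly what Lemma~\ref{lem:perm-avg} produces when the sequential-centring matrix, whose trace is $N-\epsilon_N$ and whose entries sum to $0$, is averaged over permutations; hence the substitution $\lambda\mapsto(1+\epsilon'_N)\lambda$. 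The averaged variance term becomes a symmetric function of the data. It can be bounded deterministically by $(1+\epsilon'_N)\lVert w\rVert_2^2\,\sigt^2_{X,N}$, and this is where the additive correction originates. With that step in place, the rest of your plan goes through as in the paper.
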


\begin{proof}
    The first, explicit concentration inequality follows from a minor modification of the original proof. Since $X_i \in [l,u]$ for all $i \in \{1,\ldots,N\}$, we have $\lvert v_i (X_i-\bar{X}_{\geq i})\rvert \leq (u-l) \lVert v \rVert_\infty$ for any $v \in \R^N$. Now, the result directly follows from replacing the factor $\frac{2}{3}$ with $\frac{u-l}{3}$ in the proofs of \cite{barber_hoeffding_2024}[Prop.~B.1., Thm.~3.3].

    For the second, implicit concentration inequality we use \citet{bennett_1962}'s inequality in place of a Bernstein bound in the proofs of \cite{barber_hoeffding_2024}. First, we consider their Proposition~B.1. Replacing the Bernstein with a Bennett inequality \citep{boucheron_concentration_2013}[Thm.~2.9], we can show that the process 
    \begin{equation*}
        M_t = \prod_{i=1}^t \exp\left(\lambda v_i(X_i - \bar{X}_{\geq i}) - \frac{v_i^2\sigma^2_{X_{\geq i}}}{(u-l)^2\lVert v\rVert_\infty^2}\phi\Big(\lambda (u-l)\lVert v\rVert_\infty\Big)\right)
    \end{equation*}
    is a super-martingale for all $\lambda > 0$. Next, we follow the same steps as in the proof of \citeauthor{barber_hoeffding_2024}'s Theorem~3.3 with the factor $\phi(\lambda (u-l)\lVert v\rVert_\infty)/((u-l)^2\lVert v\rVert_\infty^2)$ instead of $\lambda^2/(2(1-\frac{2\lvert\lambda\rvert}{3}\lVert v \rVert_\infty))$. Replacing $\lambda$ with $(1+\epsilon_N')\lambda$ as in the original proof, we obtain the statement
    \begin{equation*}
        \E\left[\exp\left(\lambda \sum_{i=1}^n w_i (X_i-\bar{X})\right) \bigg\vert\,\, \sigt^2_{X,N}\right] \leq \exp\left( \frac{\phi\big((1+\epsilon'_N)\lambda(u-l)\lVert w \rVert_\infty\big)}{(u-l)^2\lVert w\rVert_\infty^2} \frac{\sigt^2_{X,N} \lVert w \rVert_2^2}{1+\epsilon_N'}\right).
    \end{equation*}
    Using the tail bound of \citet{boucheron_concentration_2013}[Thm.~2.9] with $b={(1+\epsilon'_N)}(u-l)\lVert w \rVert_\infty$ and  $v= (1+\epsilon'_N)\,\sigt^2_{X,N} \lVert w \rVert_2^2$, we see that for all $t>0$,
    \begin{equation*}
        \PR\left(\sum_{i=1}^n w_i (X_i-\bar{X}) \geq t\,\, \bigg \vert\, \sigt_{X,N}^2\right) \leq \exp\left(-\frac{\sigt^2_{X,N} \lVert w \rVert_2^2}{(1+\epsilon'_N)(u-l)^2\lVert w \rVert_\infty^2} h_2\left(\frac{(u-l)\lVert w \rVert_\infty \,t}{\sigt^2_{X,N} \lVert w \rVert_2^2}\right)\right).
    \end{equation*}
    We can now set the right-hand side equal to $\delta/2$, solve for $t$ and marginalize over $\sigt_{X,N}^2$. Doing the same with the signs of the weights $w_i$ reversed and combining the two inequalities with a union bound, we get
    \begin{equation*}
        \PR\left(\bigg\vert\sum_{i=1}^n w_i (X_i-\bar{X})\bigg\vert \geq \frac{\sigt^2_{X,N}\lVert w \rVert_2^2}{(u-l)\lVert w \rVert_\infty}\,h_2^{-1}\left(\frac{(1+\epsilon_N')(u-l)^2\lVert w\rVert^2_\infty\log(2/\delta)}{\sigt^2_{X,N}\lVert w \rVert_2^2}\right)\right) \leq \delta.
    \end{equation*}
\end{proof}

Lastly, we address \cite{bardenet_concentration_2015}'s Bernstein inequality stated in their corollary~3.6 and restrict ourselves to the case where no more than half of the population is sampled.
\begin{theorem}\label{thm:bm}
    Let $\mathcal{X}:=(x_1,\ldots,x_n)$ be a finite collection of real-valued numbers. Assume that they have mean $0$, are contained in the interval $[-B,B]$, where $B>0$, and denote their variance $\sigma^2 = \frac{1}{n} \sum_{i=1}^n x_i^2$. Further, let $X_1,\ldots,X_k$ be a random sample from $\mathcal{X}$ without replacement, where $k\leq n/2$ and let $\delta \in (0,1)$. Then, with probability $1-3\delta$,
    \begin{equation*}
        \left\vert\frac{1}{k}\sum_{i=1}^k X_i\right\vert \leq \sigma \sqrt{\frac{n-k+1}{n\,k}2\log(1/\delta)}+\left(\frac{4}{3k} + \sqrt{\frac{k-1}{n\,k\,(n-k+1)}}\right) B\log(1/\delta).
    \end{equation*}
    Moreover, with probability $1-3\delta$,
    \begin{equation*}
        \left\vert\frac{1}{k}\sum_{i=1}^k X_i\right\vert \leq \frac{v}{2B}\, h^{-1}_2\!\!\left(\frac{4B^2}{v}\frac{\log(1/\delta)}{k}\right),
    \end{equation*}
    where $h_2(u):=(1+u)\log(1+u)-u$ and
    \begin{equation*}
        v = \frac{k^2}{(n-k)^2}\left[\frac{n-k+1}{n}\sigma^2+\frac{\sigma B(k-1)}{n}\sqrt{\frac{2\log(1/\delta)}{k-1}}\right].
    \end{equation*}
\end{theorem}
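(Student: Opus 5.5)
The statement to prove is Theorem~\ref{thm:bm}, a restatement of \citet{bardenet_concentration_2015}'s Corollary~3.6 for $k\le n/2$, together with a Bennett-type variant. I would build both parts from the same martingale (reverse-martingale) construction that Bardenet and Maillard use for sampling without replacement.

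\textbf{Setup.} Write $S_j=\sum_{i\le j}X_i$ and consider the forward martingale
\begin{equation*}
Z_j=\frac{S_j}{n-j},\qquad j=1,\ldots,k.
\end{equation*}
Because the population has mean zero, $Z_j$ is a martingale with increments $Z_j-Z_{j-1}=\frac{X_j+Z_{j-1}}{n-j}$. These increments are bounded in absolute value by $\frac{2B}{n-j}\le\frac{2B}{n-k}$. The conditional variance of the $j$-th increment is $\frac{1}{(n-j)^2}\hat\sigma^2_{j-1}$, where $\hat\sigma^2_{j-1}$ is the variance of the remaining population.

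\textbf{Step 1: control the remaining-population variance.} I would bound $\hat\sigma^2_{j-1}\le \frac{n}{n-j+1}\sigma^2-\frac{n}{n-j+1}\bar\mu_{j-1}^2$, up to the exact form. Bardenet–Maillard bound $\sum_j \hat\sigma^2_{j-1}/(n-j)^2$ by combining:
\begin{itemize}
\item a deterministic part of order $\frac{k(n-k+1)}{n(n-k)^2}\sigma^2$;
\item a random part, controlled with probability $1-\delta$ by a Serfling/Hoeffding-type bound on the squared partial means.
\end{itemize}
This random part is where the term $\frac{\sigma B(k-1)}{n}\sqrt{2\log(1/\delta)/(k-1)}$ inside $v$ comes from. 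I would take this bound directly from their Lemma/Theorem~3.5.

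\textbf{Step 2: Bennett (Freedman) inequality for the martingale.} On the event of Step~1, the predictable quadratic variation of $(n-k)Z_k/k$ is at most
\begin{equation*}
\frac{v}{k^2}\cdot\frac{(n-k)^2}{(n-k)^2}\cdot k ,
\end{equation*}
which after rescaling gives exactly the $v$ in the statement, and the increments are bounded by $2B$ on the $\frac{1}{k}$-scale. Freedman's inequality, i.e. the Bennett version with $\phi(u)=e^u-u-1$ as in \citet{fan_exponential_2015} or \citet{boucheron_concentration_2013}[Thm.~2.9], then gives a one-sided tail $\exp\bigl(-\tfrac{kv}{4B^2}h_2(2Bt/v)\bigr)$. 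Setting this equal to $\delta$, solving for $t$, and applying a union bound over both signs and the variance event yields the implicit bound with probability $1-3\delta$.

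\textbf{Step 3: explicit Bernstein form.} Apply the elementary inequality $h_2^{-1}(x)\le\sqrt{2x}+x/3$. This turns the implicit bound into $\sqrt{2v\log(1/\delta)/k}+\frac{2B}{3k}\log(1/\delta)$. Then I would split $\sqrt{v}$ using $\sqrt{x+y}\le\sqrt x+\sqrt y$ and the AM-GM inequality $\sqrt{\sigma B c}\le(\sigma\cdots+B\cdots)$. This produces:
\begin{itemize}
\item the leading term $\sigma\sqrt{\frac{n-k+1}{nk}2\log(1/\delta)}$, after using $k\le n/2$ so that $\frac{k}{n-k}\le1$ and the ratio factor cancels appropriately;
\item the $\sqrt{\frac{k-1}{nk(n-k+1)}}\,B\log(1/\delta)$ term;
\item the $\frac{4}{3k}$ constant, from $\frac{2}{3}$ doubled because the range is $2B$.
\end{itemize}

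\textbf{Main obstacle.} The delicate part is the bookkeeping of constants: matching the factor $\frac{k^2}{(n-k)^2}$ in $v$ with the normalization of the martingale, and checking that the restriction $k\le n/2$ is what allows the simple forward martingale to be used without switching to the reverse one. I would verify this by following Bardenet–Maillard's proof of Corollary~3.6 line by line, replacing their Bernstein step with Bennett's.
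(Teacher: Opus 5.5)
Your route is the paper's own: Serfling's martingale $Z_j=S_j/(n-j)$, Bardenet--Maillard's uniform control of the remaining-population variances (costing one $\delta$), Bennett/Freedman in place of their Bernstein step, and a two-sided union bound that shares the variance event, giving $1-3\delta$. The one real difference is that you derive the explicit part from the implicit one instead of citing their Corollary~3.6; that variation is fine.

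\textbf{The gap is in Step~2}, exactly where you expected trouble: the quadratic variation does \emph{not} rescale to the $v$ of the statement. Since $\frac1k S_k=\frac{n-k}{k}Z_k$, the increments of $\frac1kS_k$ are bounded by $\frac{n-k}{k}\cdot\frac{2B}{n-j}\le\frac{2B}{k}$. On the variance event its predictable quadratic variation is
\[
\frac{(n-k)^2}{k^2}\sum_{j=1}^k\frac{\hat\sigma^2_{j-1}}{(n-j)^2}\le\frac{(n-k)^2}{k^2}\cdot\frac{k(n-k+1)}{n(n-k)^2}\Big[\sigma^2+\frac{\sigma B(k-1)}{n-k+1}\sqrt{\tfrac{2\log(1/\delta)}{k-1}}\Big]=\frac{v'}{k},
\]
where $v'=\frac{n-k+1}{n}\sigma^2+\frac{\sigma B(k-1)}{n}\sqrt{2\log(1/\delta)/(k-1)}=\frac{(n-k)^2}{k^2}\,v$.

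So the factor $k^2/(n-k)^2$ cancels. The quantity $v/k$ built from the statement's $v$ is the quadratic-variation bound for $Z_k$ itself. Consequently the stated right-hand side is a valid bound for $\lvert\frac{1}{n-k}\sum_{i\le k}X_i\rvert$, not for $\lvert\frac1k\sum_{i\le k}X_i\rvert$.

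Because $v\le v'$ when $k\le n/2$, your argument proves the Bennett bound with $v'$, which is weaker than claimed. No bookkeeping can close the difference, because the claimed inequality is false. Take $n=20$, ten points at $B$ and ten at $-B$, $k=2$, $\delta=0.1$:
\begin{itemize}
\item $v\approx0.0131\,B^2$, the argument of $h_2^{-1}$ is about $352.8$, and $h_2^{-1}(352.8)\approx97.1$;
\item so the right-hand side is about $0.63\,B$;
\item but $\lvert\bar X\rvert\in\{0,B\}$, and $\PR(\bar X=0)=10/19<0.7=1-3\delta$.
\end{itemize}
For $k=2$ the stated bound even tends to $0$ as $n\to\infty$.

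The paper's proof contains the same slip. Its first display is a valid bound for $\max_j\frac{1}{n-j}\sum_{i\le j}X_i$ (with the admissibly larger $\varphi(2B\lambda/k)$). It is, however, labelled and used as a bound for $\max_j\frac1j\sum_{i\le j}X_i$, so the factor $\frac{n-k}{k}$ is lost. Following Bardenet--Maillard ``line by line'' in that way reproduces the error. The correct endpoint of your Step~2 is the implicit bound with $v'$ in place of $v$.

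\textbf{Step~3.} With $v'$, your Step~3 goes through and even sharpens the stated explicit bound.
\begin{itemize}
\item The inequality $h_2^{-1}(x)\le\sqrt{2x}+x/3$ gives $\sqrt{2v'\log(1/\delta)/k}+\frac{2B}{3k}\log(1/\delta)$.
\item Set $\rho=\frac{n-k+1}{n}$ and $c=\frac{B\sqrt{2(k-1)\log(1/\delta)}}{n}$. Then $\sqrt{\rho\sigma^2+c\sigma}\le\sqrt\rho\,\sigma+\frac{c}{2\sqrt\rho}$ produces exactly $\sigma\sqrt{\frac{n-k+1}{nk}2\log(1/\delta)}+\sqrt{\frac{k-1}{nk(n-k+1)}}\,B\log(1/\delta)$.
\item The result has $\frac{2}{3k}$ where the statement has $\frac{4}{3k}$. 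No ``$k/(n-k)\le1$'' step is needed; that step only appears if you start from the wrong $v$.
\end{itemize}
Use this completing-the-square inequality rather than $\sqrt{x+y}\le\sqrt x+\sqrt y$ followed by AM--GM, which would inflate the leading $\sigma$-coefficient.

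\textbf{Step~1 (minor).} The exact identity is $\hat\sigma^2_{j-1}=\frac{n\sigma^2-\sum_{i<j}X_i^2}{n-j+1}-\hat\mu_{j-1}^2$. The $\sigma B$ cross term comes from the lower deviation of the sampled second moment $\sum_{i<j}X_i^2$, whose variance proxy is at most $B^2\sigma^2$. It does not come from a Hoeffding bound on squared partial means. Since you defer this step to Bardenet--Maillard, as the paper does, this is only a mis-description, not a gap.
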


\begin{proof}
    We prove the two results separately.
    
    \textit{Bernstein Bound} The first statement follows from Corollary~3.6 in \cite{bardenet_concentration_2015} with a few minor modifications. Their $n$ and $N$ correspond to our $k$ and $n$; moreover, we only consider the case $k\leq n/2$. Lastly, our random variables are centred ($\mu=0$) which allows us to replace the factor $b-a$ with $B$; see also the proofs of their Proposition~3.2 and Lemma~3.3.

    \textit{Bennett Bound} This result can be proven using the derivations in the proof of \cite[Thm.~3.5]{bardenet_concentration_2015}. To help the reader understand the structure of the argument, we provide the main steps here and refer to \cite{bardenet_concentration_2015} for the details. Note that we adapt the notation to our setting and replace $b-a$ with $B$ as explained above. References in the following paragraph invoke results from \cite{bardenet_concentration_2015} and not this article.

    Define $\varphi(x) := (e^x-1-x)/x^2 $ and let $\lambda >0$. Combining Proposition~3.2 and equation~(17) in Lemma~3.3, we obtain that with probability $1-2\delta$
    \begin{align*}
        \max_{1\leq j \leq k}\, \frac{1}{j}\sum_{i=1}^j X_j &\leq \frac{\log(1/\delta)}{\lambda} + \frac{\lambda}{k^2}\varphi\left(\frac{2B\lambda}{k}\right) \left[\sigma^2+\frac{\sigma B(k-1)}{n-k+1}\sqrt{\frac{2\log(1/\delta)}{k-1}}\right]\sum_{j=1}^k \frac{k^2}{(n-j)^2}\\
        &\leq \frac{\log(1/\delta)}{\lambda} + \frac{\lambda}{k^2}\varphi\left(\frac{2B\lambda}{k}\right) (k+1) \gamma^2.
    \end{align*}
    The second inequality follows from the estimation in~(10) and we define
    \begin{equation*}
        \gamma^2 := \left[\sigma^2+\frac{\sigma B(k-1)}{n-k+1}\sqrt{\frac{2\log(1/\delta)}{k-1}}\right] \frac{1}{k+1} \frac{k^3}{(n-k)^2}\left(1-\frac{k-1}{n}\right).
    \end{equation*}
    We can invert this statement above and get
    \begin{equation*}
        \PR\left(\max_{1\leq j \leq k}\, \frac{1}{j}\sum_{i=1}^j X_j \geq t\right)\leq \exp\left(-\lambda t +\frac{\lambda^2(k+1)}{k^2}\varphi\left(\frac{2B\lambda}{k}\right) \gamma^2\right) +\delta.
    \end{equation*}
    for all $t>0$. We can now follow the same steps as in \citeauthor{bardenet_concentration_2015}'s article to find the optimal $\lambda$, where $\gamma^2$ plays the role of $\tilde{\gamma}^2$. Plugging this optimal choice of $\lambda$ into the inequality above yields
     \begin{equation*}
        \PR\left(\max_{1\leq j \leq k}\, \frac{1}{j}\sum_{i=1}^j X_j \geq t\right) \leq \exp\left(-\frac{k}{2Bu}h(ut)\right)+\delta,
    \end{equation*}
    where
    \begin{equation*}
        u := \frac{2B(n-k)^2/k^2}{\frac{n-k+1}{n}\sigma^2+\frac{\sigma B(k-1)}{n}\sqrt{\frac{2\log(1/\delta)}{k-1}}}.
    \end{equation*}
   Solving the right-hand side for $t$, we obtain
    \begin{equation*}
        t = \frac{1}{u}\, h^{-1}_2\!\!\left(\frac{2Bu\log(1/\delta)}{k}\right).
    \end{equation*}
    Hence, with probability $1-2\delta$,
    \begin{equation*}
        \frac{1}{k}\sum_{i=1}^k X_i \leq \frac{1}{u}\, h^{-1}_2\!\!\left(\frac{2Bu\log(1/\delta)}{k}\right)
    \end{equation*}
    Lastly, we can repeat the same derivations for the random variables $-X_j$ to get a lower bound and combine the two via a union bound. Note that we need to account for the upper bound on the variance only once. We state the final result in terms of $v := \frac{2B}{u}$.
\end{proof}

\section{Proofs of Section~\ref{sec:bernoulli}}
\subsection{Self-Normalizing Concentration Inequality}\label{app:self-normalizing-conc-inq}

\begin{proof}[Proof of Theorem~\ref{thm:mine-bernstein}] We initially prove the concentration inequality for the case $[l_i,u_i] = [0,1]$ for all $i \in \{1,\ldots,n\}$. At the end of the proof, we show how the result in this ``base case'' can be directly extended to the general setting by a linear transformation of the random variables.

    Define the function $\psi(\lambda) := -\log(1-\lambda)-\lambda$, let $\lambda_1,\ldots,\lambda_n \in (0,1)$ be deterministic numbers and define $\xb_{<i} := \frac{1}{i-1}\sum_{j=1}^{i-1} X_j$ for $i \geq 2$ and $\xb_{<1}:=0$. First, we show that the process
    \begin{equation*}
        M_t := \prod_{i=1}^t \exp \left(\lambda_i(X_i-\mu_i)-\psi(\lambda_i)(X_i-\xb_{<i})^2\right)
    \end{equation*}
    is a super-martingale with respect to the filtration $\mathcal{F}_t:=\sigma(X_1,\ldots,X_t)$. A very similar result was shown by \citet[Thm.~4]{howard_time-uniform_2021}: They set all $\lambda_i$ to be equal and use a general predictable sequence $\hat{X}_i$ where we use $\xb_{<i}$.

    It is easy to see that $(M_t)_t$ is adapted to the filtration and the integrability condition is fulfilled as the $X_i$ are bounded. Hence, it remains to show that $\E[M_{t}\mid \mathcal{F}_{t-1}] \leq M_{t-1}$ almost surely for all $t\in\{1,\ldots,n\}$. First, we use the product structure of $(M_t)_t$ and measurability to re-write the expression:
    \begin{align*}
        \E[M_{t}\mid \mathcal{F}_{t-1}] &= M_{t-1} \cdot \E\left[\exp\left(\lambda_{t}(X_{t}-\mu_{t}) - \psi(\lambda_{t})(X_{t}-\xb_{<t})^2\right)\mid \mathcal{F}_{t-1}\right]\\
        &= M_{t-1} \cdot e^{\lambda_t(\xb_{<t}-\mu_t)} \E\left[\exp\left(\lambda_{t}(X_{t}-\xb_{<t}) - \psi(\lambda_{t})(X_{t}-\xb_{<t})^2\right)\mid \mathcal{F}_{t-1}\right].
    \end{align*}
    In the proof of their Lemma~4.1, \citet[eq.~(4.12)]{fan_exponential_2015} show that for any $\lambda \in[0,1)$ and $\xi \geq -1$ the inequality $\exp(\lambda \xi -\psi(\lambda)\xi^2)\leq 1+\lambda \xi$ holds. Applying this result with $\xi = X_t -\xb_{<t}$ inside the expectation yields
    \begin{align*}
        \E[M_{t}\mid \mathcal{F}_{t-1}] &\leq M_{t-1} \cdot e^{\lambda_t(\xb_{<t}-\mu_t)} \E\left[1+\lambda (X_{t}-\xb_{<t})\mid \mathcal{F}_{t-1}\right]\\
        &= M_{t-1} \cdot e^{\lambda_t(\xb_{<t}-\mu_t)} (1-\lambda_t (\xb_{<t}-\mu_t))
        \leq M_{t-1} \cdot e^{\lambda_t(\xb_{<t}-\mu_t)} e^{-\lambda_t(\xb_{<t}-\mu_t)}\\
        &= M_{t-1},
    \end{align*}
    where the last inequality follows from the fact that $1-x \leq e^{-x}$.

    Since $(M_t)_t$ is a super-martingale and $M_0=1$, we get the inequality
    \begin{equation}\label{eq:mgf-asym}
        \E\left[\exp\left(\sum_{i=1}^n\lambda_i(X_i-\mu_i)-\psi(\lambda_i)\sum_{i=1}^n(X_i-\xb_{<i})^2\right)\right] \leq 1.
    \end{equation}
    We could already obtain concentration results from this bound; these, however, would depend on the order of the data points due to the $\xb_{<i}$-terms. To remedy this issue, we use a symmetrization-technique similar to \citet{barber_hoeffding_2024}'s.

    To this end, we introduce the following notation: $\lambda := (\lambda_1,\ldots,\lambda_n)^T$, $D:=\diag(\psi(\lambda_1),\ldots,\psi(\lambda_n))$ and the matrix $A$ whose first row is given by $(1,\zero_{n-1})$ and whose $i$-th row is given by $(-\frac{1}{i-1} \one_{i-1},1,\zero_{n-i})$, where $i \in \{2,\ldots,n\}$. Note that $(AX)_i = X_i - \xb_{<i}$. We can now rewrite inequality~\eqref{eq:mgf-asym} as follows
    \begin{equation*}
        \E\left[\exp\left(\lambda^T(X-\mu)- X^T A^T D A X\right)\right] \leq 1.
    \end{equation*}
    Since this inequality holds for any order of the random variables $X_i$, it remains true when we take an average over all permutations of the vector $X$; the permutation matrix is denoted $\Pi$. Moreover, we can apply Jensen's inequality to move the average inside the exponential function yielding
    \begin{equation}\label{eq:mgf-pre-sym}
        \E\left[\exp\left(\frac{1}{n!}\sum_{\Pi}\lambda^T\Pi(X-\mu)- X^T\Pi^T A^T D A \,\Pi\,X\right)\right] \leq 1.
    \end{equation}
    The first term in the average easily simplifies to
    \begin{equation*}
        \frac{1}{n!}\sum_{\Pi}\lambda^T\Pi(X-\mu) = \bar{\lambda} \sum_{i=1}^n X_i-\mu_i,
    \end{equation*}
    where $\bar{\lambda} = \frac{1}{n}\sum_{i=1}^n \lambda_i$. For the second term, we invoke Lemma~\ref{lem:perm-avg} with $B=A^T D\, A$: We compute the trace
    \begin{equation*}
        \tr(A^T D\,A)= \tr(D AA^T)= \psi(\lambda_1) + \sum_{i=2}^n \psi(\lambda_i) \left(\frac{i-1}{(i-1)^2}+1\right) = \psi(\lambda_1) + \sum_{i=2}^n \psi(\lambda_i) \frac{i}{i-1},
    \end{equation*}
    and the sum of all the entries
    \begin{align*}
        \one^T (A^TD\,A) \one = e_1^T D\, e_1 = \psi(\lambda_1), 
    \end{align*}
    where $e_1 = (1,0,\ldots,0)^T$, and plug these quantities into the formula provided
    \begin{align*}
        \frac{1}{n!} \sum_{\Pi} \Pi^T A^T D\, A\, \Pi &= \frac{\tr(A^T D\, A)}{n} I +\frac{\one^TA^T D\, A\,\one-\tr(A^T D\, A)}{n(n-1)}(\one\one^T-I)\\
        &= \frac{\psi(\lambda_1)+\sum_{i=2}^n \psi(\lambda_i) \frac{i}{i-1}}{n}I - \frac{\sum_{i=2}^n \psi(\lambda_i) \frac{i}{i-1}}{n(n-1)}(\one\one^T-I)\\
        &=\frac{\psi(\lambda_1)}{n}I + \frac{\sum_{i=2}^n \psi(\lambda_i) \frac{i}{i-1}}{n-1}\left(I-\frac{1}{n}\one\one^T\right).
    \end{align*}
    The matrix $P^\perp:= I-\frac{1}{n}\one\one^T$ is the projection onto the subspace which is perpendicular to $\one$ and, consequently, $(P^\perp X)_i = X_i-\xb$ for all $i \in \{1,\ldots,n\}$. For this reason, we only want to keep the second term in the equation and set $\lambda_1 = 0$ and $\lambda_i = \lambda$ for some $\lambda \in (0,1)$ and all $i\in\{2,\ldots,n\}$. Thus, we obtain
    \begin{equation*}
        \frac{1}{n!} \sum_{\Pi} \Pi^T A^T D\, A\, \Pi
        =  \psi(\lambda) \frac{\sum_{i=2}^n 1+ \frac{1}{i-1}}{n-1} P^\perp
        = \psi(\lambda) \left(1+\frac{H_{n-1}}{n-1}\right) P^\perp.
    \end{equation*}
    Inserting this expression into~\eqref{eq:mgf-pre-sym}, we get
    \begin{equation}
        \E\left[\exp\left(\frac{n-1}{n}\lambda \sum_{i=1}^n(X_i-\mu_i)-\psi(\lambda)(1+\epsilon_{n-1})\sum_{i=1}^n(X_i-\bar{X})^2\right)\right] \leq 1,
    \end{equation}
    for all $\lambda \in [0,1)$. We can now apply Markov's inequality which implies
    \begin{equation*}
        \PR\left(\frac{n-1}{n}\lambda \sum_{i=1}^n(X_i-\mu_i)-\psi(\lambda)(1+\epsilon_{n-1})\sum_{i=1}^n(X_i-\bar{X})^2 \geq t\right) \leq e^{-t},
    \end{equation*}
    for any $t > 0$. Re-arranging this inequality and solving the right-hand side for $\delta$, we obtain
    \begin{equation*}
        \PR\left(\frac{1}{n}\sum_{i=1}^n X_i - \mu_i \geq \frac{\psi(\lambda)(1+\epsilon_{n-1})\vh}{\lambda} + \frac{\log(1/\delta)}{\lambda(n-1)}\right) \leq \delta.
    \end{equation*}
    Moreover, we notice that we can repeat the entire derivation for the random variables $-X_i$ with means $-\mu_i$ and obtain the same concentration result. Combining the inequalities for both sides with a union bound, we get that with probability $1-\delta$,
    \begin{equation}\label{eq:bound-lambda}
        \left\vert\frac{1}{n}\sum_{i=1}^n X_i - \mu_i\right\vert \leq \frac{\psi(\lambda)(1+\epsilon_{n-1})\vh}{\lambda} + \frac{\log(2/\delta)}{\lambda(n-1)}.
    \end{equation}
    Lastly, we choose the hyper-parameter $\lambda$ so that the bound is as tight as possible. We pursue two approaches: one that produces an explicit but slightly loose bound and one that provides a tighter, however implicit bound.
    
    \textit{Explicit Bound} We use the inequality $\psi(\lambda) \leq \frac{\lambda^2}{2(1-\lambda)}$ which holds for all $\lambda \in [0,1)$. Applying it to the right-hand side of~\eqref{eq:bound-lambda}, we obtain
    \begin{equation*}
        \frac{\lambda(1+\epsilon_{n-1})\vh}{2(1-\lambda)} + \frac{\log(2/\delta)}{\lambda(n-1)}.
    \end{equation*}
    Taking the derivative, equating it to zero and solving for $\lambda$, we find that the minimum is assumed at
    \begin{equation*}
        \lambda^* = \left(\sqrt{\frac{(1+\epsilon_{n-1})\vh}{2} \frac{n-1}{\log(2/\delta)}}+1\right)^{-1}.
    \end{equation*}
    Plugging $\lambda^*$ into the expression above it, we get the first result that with probability $1-\delta$,
    \begin{equation*}
        \left\vert\frac{1}{n}\sum_{i=1}^n X_i - \mu_i\right\vert \leq\sqrt{\frac{2(1+\epsilon_{n-1})\,\vh \,\log(2/\delta)}{n-1}} + \frac{\log(2/\delta)}{n-1}.
    \end{equation*}

    \textit{Implicit Bound} We directly take the derivative of the right-hand side of~\eqref{eq:bound-lambda} and equate it to zero
    \begin{equation*}
        \frac{1}{\lambda^2}\left((1+\epsilon_{n-1})\vh\, (\lambda \psi'(\lambda)-\psi(\lambda))-\frac{\log(2/\delta)}{n-1}\right) = 0,
    \end{equation*}
    which is equivalent to the equation
    \begin{equation*}
        \frac{\log(2/\delta)}{n-1} =  (1+\epsilon_{n-1})\vh \left(\frac{\lambda}{1-\lambda}+\log(1-\lambda)\right).
    \end{equation*}
    We now re-parameterize the equation above setting $\eta = \frac{\lambda}{1-\lambda}$ (and $\lambda = \frac{\eta}{1+\eta}$). Hence, we obtain that the optimal $\eta$ is implicitly given by
    \begin{equation*}
        \eta - \log(1+\eta) = \frac{\log(2/\delta)}{(1+\epsilon_{n-1})(n-1)\vh}.
    \end{equation*}
    We can now use the re-parameterization as well as the implicit definition of $\eta$ and plug these into the right-hand side of~\eqref{eq:bound-lambda}:
    \begin{multline*}
        \frac{1+\eta}{\eta} \left[(1+\epsilon_{n-1})\vh\left(\log(1+\eta)-\frac{\eta}{1+\eta}\right)+\frac{\log(2/\delta)}{n-1}\right]\\
        \begin{aligned}[t]
            &= \frac{1+\eta}{\eta} \left[(1+\epsilon_{n-1})\vh\left(\eta-\frac{\log(2/\delta)}{(1+\epsilon_{n-1})(n-1)\vh}-\frac{\eta}{1+\eta}\right)+\frac{\log(2/\delta)}{n-1}\right]\\
            &= \frac{1+\eta}{\eta} \left[\eta - \frac{\eta}{1+\eta}\right] (1+\epsilon_{n-1})\vh = (1+\epsilon_{n-1})\vh\, \eta
        \end{aligned}
    \end{multline*}
    Thus, we obtain the second statement that with probability $1-\delta$,
    \begin{equation*}
        \left\vert\frac{1}{n}\sum_{i=1}^n X_i - \mu_i\right\vert \leq (1+\epsilon_{n-1})\vh\, \eta.
    \end{equation*}

    In the general setting, where $X_i \in [l_i,u_i]$, we can apply our bounds above to the transformed random variable $\frac{1}{2}+(X_i-c_i)/L \in [0,1]$ and the corresponding generalized versions directly follow.
\end{proof}

\subsection{General Bernstein-Type Concentration Inequality}\label{app:bernoulli-general-bernstein}
\begin{proposition}
    Suppose $Z\sim \bern(\pi_1,\ldots,\pi_n)$ and let $\alpha \in (0,1)$. Define $\piu := \min_{1 \leq i \leq n} \{\pi_i, 1-\pi_i\}$,
    \begin{equation*}
        L(\piu) := \max\left\{\frac{\max\{\lvert a-m\rvert, \lvert b -m \rvert\}}{\piu(1-\piu)}, \frac{b-a}{\piu}\right\},
    \end{equation*}
    and $\epsilon_n := \frac{1}{n} \sum_{j=1}^{n} \frac{1}{j}$. Moreover, set $\vh := \frac{1}{n(n-1)}\sum_{i<j} ((\tauh^m_{i,n}-c_i)-(\tauh^m_{j,n}-c_j))^2$, where
    \begin{equation*}
        c_i := \left(\max\left\{\frac{b-m}{\pi_i},\frac{m-a}{1-\pi_i}\right\}+\min\left\{\frac{a-m}{\pi_i},\frac{m-b}{1-\pi_i}\right\}\right)\Big/\,2.
    \end{equation*}
    Then, the $1-\alpha$ confidence intervals for~$\tau_n$ based on \citet{maurer_empirical_2009}'s concentration inequality is given by
    \begin{equation*}
        \left[\tauh^m_n \pm \Bigg(\sqrt{\frac{2\vh\log(3/\alpha)}{n}} + L(\piu)\frac{7\log(3/\alpha)}{3(n-1)}\Bigg)\right].
    \end{equation*}
    The $1-\alpha$ confidence intervals based on our Theorem~\ref{thm:mine-bernstein} are given by
    \begin{align*}
        \Bigg[\tauh^m_n &\pm \Bigg(\sqrt{\frac{2(1+\epsilon_{n-1})\vh\log(2/\alpha)}{n-1}} + L(\piu)\frac{\log(2/\alpha)}{n-1}\Bigg)\Bigg],\\
        \Bigg[\tauh^m_n &\pm (1+\epsilon_{n-1})\frac{\vh}{L(\piu)}\, h_1^{-1}\!\left(\frac{L(\piu)^2\log(2/\delta)}{(1+\epsilon_{n-1})(n-1)\vh}\right)\Bigg].
    \end{align*}
\end{proposition}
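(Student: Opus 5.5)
The plan is to write $\tauh^m_n-\tau_n$ as an average of independent, centred, bounded summands, and then invoke Theorem~\ref{thm:mp} and Theorem~\ref{thm:mine-bernstein} with appropriate supports $[l_i,u_i]$. Under $Z\sim\bern(\pi_1,\ldots,\pi_n)$ the summands $X_i:=\tauh^m_{i,n}$ are jointly independent, since each depends only on $Z_i$. Each takes exactly two values, $(Y_i(1)-m)/\pi_i$ (when $Z_i=1$) and $-(Y_i(0)-m)/(1-\pi_i)$ (when $Z_i=0$). Their means are $\mu_i=Y_i(1)-Y_i(0)$, so $\frac1n\sum_i X_i-\mu_i=\tauh^m_n-\tau_n$.

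Because the potential outcomes are unknown, I will use the \emph{data-free} support implied by Assumption~\ref{ass:bounded}:
\begin{equation*}
l_i:=\min\Big\{\tfrac{a-m}{\pi_i},\tfrac{m-b}{1-\pi_i}\Big\},\qquad u_i:=\max\Big\{\tfrac{b-m}{\pi_i},\tfrac{m-a}{1-\pi_i}\Big\}.
\end{equation*}
Both values of $X_i$ lie in $[l_i,u_i]$, whose midpoint is exactly the $c_i$ in the statement. With these choices, the $\vh$ of the proposition coincides with the centred sample variance in Theorems~\ref{thm:mp} and~\ref{thm:mine-bernstein}, and $\vh$ is computable from the data because $\tauh^m_{i,n}$ is observed.

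The key step, and the only real obstacle, is to show that $L:=\max_i(u_i-l_i)\le L(\piu)$. Since $u_i-l_i$ is the largest of the four pairwise differences of the candidate endpoints, I will bound each of the four terms. Write $B=\max\{|a-m|,|b-m|\}$.
\begin{itemize}
\item The two ``cross'' differences, $\frac{b-m}{\pi_i}+\frac{b-m}{1-\pi_i}$ and its analogue with $m-a$, are each at most $\frac{B}{\pi_i(1-\pi_i)}$.
\item The two ``same-type'' differences are $\frac{b-a}{\pi_i}$ and $\frac{b-a}{1-\pi_i}$, each at most $\frac{b-a}{\min\{\pi_i,1-\pi_i\}}$.
\end{itemize}
It then remains to check monotonicity in $\pi_i$. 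The map $p\mapsto 1/(p(1-p))$ is symmetric about $1/2$ and decreasing in $\min\{p,1-p\}$, so $\frac{1}{\pi_i(1-\pi_i)}\le\frac{1}{\piu(1-\piu)}$. Likewise $\frac{1}{\min\{\pi_i,1-\pi_i\}}\le\frac1{\piu}$. Taking the maximum over $i$ gives $L\le L(\piu)$. (A sanity check for $m=(a+b)/2$: here $B=(b-a)/2$ and $1/(2\piu(1-\piu))\le 1/\piu$, so $L(\piu)=(b-a)/\piu$, matching Proposition~\ref{prop:confint-bernstein-indep}.)

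Finally, I plug into the two theorems with $\delta=\alpha$. Theorem~\ref{thm:mp} gives the first interval directly, after replacing $L$ by the larger $L(\piu)$. The explicit bound of Theorem~\ref{thm:mine-bernstein} gives the second interval in the same way. For the implicit bound I need monotonicity of the right-hand side in $L$. Writing $x=L^2/K$ with $K=\frac{(1+\epsilon_{n-1})(n-1)\vh}{\log(2/\alpha)}$, the bound equals a positive constant times $\sqrt{x}\,h_1^{-1}(x)/x$. So it suffices to show that $g(x):=h_1^{-1}(x)/\sqrt{x}$ is nondecreasing. Setting $x=h_1(\eta)$, this is equivalent to $\eta^2/h_1(\eta)$ being nondecreasing in $\eta>0$. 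Differentiating reduces this to $2h_1(\eta)\ge \eta h_1'(\eta)=\eta^2/(1+\eta)$. That inequality holds because both sides vanish at $0$ and the derivative of $2h_1(\eta)$, namely $2\eta/(1+\eta)$, dominates the derivative of $\eta^2/(1+\eta)$, namely $(\eta^2+2\eta)/(1+\eta)^2$.

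Inverting each two-sided tail bound around $\tauh^m_n$ then yields the three stated $1-\alpha$ intervals.
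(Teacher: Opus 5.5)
Your proposal is correct and follows the paper's proof. It uses the same data-free supports $[l_i,u_i]$ with midpoints $c_i$ and the same four-term bound $u_i-l_i\le L(\piu)$, then applies Theorems~\ref{thm:mp} and~\ref{thm:mine-bernstein} directly. Your check that the implicit bound is nondecreasing in $L$ is valid and covers a point the paper leaves implicit; you could also avoid it by applying the theorems to the enlarged supports $[c_i - L(\piu)/2,\, c_i + L(\piu)/2]$, which keep the midpoints $c_i$ (so $\vh$ is unchanged) and have maximal length exactly $L(\piu)$.
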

\begin{proof}
    The random variable $\tauh^m_{n,i}$ assumes the following two values
    \begin{equation*}
        \tauh^m_{n,i} \in \left\{\frac{Y_i(1)-m}{\pi_i}, - \frac{Y_i(0)-m}{1-\pi_i}\right\}
    \end{equation*}
    and is therefore contained in the interval
    \begin{equation*}
        \tauh^m_{n,i} \in [l_i, u_i] := \left[\min\left\{\frac{a-m}{\pi_i}, \frac{m-b}{1-\pi_i}\right\}, \max\left\{\frac{b-m}{\pi_i}, \frac{m-a}{1-\pi_i}\right\}\right].
    \end{equation*}
    Its midpoint is given by $c_i$ and, setting $B:=\max\{\lvert a-m\rvert, \lvert b -m \rvert\}$, we compute an upper bound for its length:
    \begin{align*}
        u_i-l_i &= \max\left\{\frac{b-m}{\pi_i}, \frac{m-a}{1-\pi_i}\right\} + \max\left\{\frac{m-a}{\pi_i}, \frac{b-m}{1-\pi_i}\right\}\\
        &= \max\left\{\frac{b-m}{\pi_i(1-\pi_i)}, \frac{m-a}{\pi_i(1-\pi_i)}, \frac{b-a}{\pi_i}, \frac{b-a}{1-\pi_i}\right\}\\
        &\leq \max\left\{\frac{B}{\piu(1-\piu)}, \frac{b-a}{\piu}\right\}=:L(\piu)
    \end{align*}
    For the midpoint-differenced estimator, $B$ equals $(b-a)/2$ and $L(\piu)$ simplifies accordingly:
    \begin{equation*}
        L(\piu) = \frac{b-a}{\piu} \max\left\{\frac{1}{2(1-\piu)},1\right\} = \frac{b-a}{\piu},
    \end{equation*}
    since $1-\piu \geq 1/2$. The confidence intervals now directly follow by applying Theorems~\ref{thm:mp} and~\ref{thm:mine-bernstein}.    
\end{proof}

\begin{remark}
    Comparing to the derivation of the Hoeffding inequality, we notice that the upper bound on $u_i-l_i$ is tighter than in the proof above, cf.\ proof of Proposition~\ref{prop:bernoulli-hoeffding}. This is a remnant of the fact that we additionally need to compute the midpoint of the support for the variance estimator $\vh$ for Bernstein-type inequalities. This is why we choose a potentially larger interval $[u_i, l_i]$ whose endpoints are fully known.
\end{remark}

\section{Proofs of Section~\ref{sec:cre}}
\subsection{Concentration of Neyman's Variance Estimator}\label{app:proof-neyman}

We divide the proof of Theorem~\ref{thm:neyman-conc} into several steps. First, we derive a self-bounding property of Neyman's variance estimator, then we state a modified log-Sobolev inequality from the literature and finally use it to complete the proof.

\begin{lemma}[Self-boundedness of $\vhn$]\label{lem:self-bounded}
	Define the swap operator $\tau_{ij}\colon \{0,1\}^n \to \{0,1\}^n$ as
    		$\tau_{ij}(z) = (z_1,\ldots, z_{i-1},z_j,z_{i+1},\ldots, z_{j-1}, z_i, z_{j+1},\ldots,z_n)$,
	where $i,j\in\{1,\ldots,n\}$. Neyman's variance estimator is self-bounded in the sense that
	\begin{equation*}
		\sum_{i,j=1}^n \big(\vhn(Z)-\vhn(\tau_{ij}Z)\big)_+^2\, \leq\, 2 (b-a)^2\,c(n_0,n_1)\, \vhn(Z),
	\end{equation*}
	where the constant is given by
	\begin{equation*}
		c(n_0,n_1) := \frac{n\,(n_0^2+n_1^2)^2}{n_0n_1^3(n_0-1) + n_1 n_0^3(n_1-1)}.
	\end{equation*}
\end{lemma}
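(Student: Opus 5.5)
The plan is to reduce the double sum to pairs with $Z_i \neq Z_j$. For each such pair, I bound the \emph{decrease} of $\vhn$ by a nonnegative quantity that depends only on observed deviations from the current group means. Summing these bounds then produces $\vhn(Z)$ itself.

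\textbf{Step 1 (only mixed pairs matter).} If $Z_i = Z_j$ then $\tau_{ij}Z = Z$ and the summand vanishes. By symmetry of the summand in $(i,j)$, the sum equals $2\sum_{i:Z_i=1}\sum_{j:Z_j=0}(\vhn(Z)-\vhn(\tau_{ij}Z))_+^2$. This accounts for the factor $2$ in the statement. For such a pair, the swap removes the observed value $x := Y_i(1)$ from the treated sample and inserts $y := Y_j(1)$. Simultaneously it removes $x' := Y_j(0)$ from the control sample and inserts $y' := Y_i(0)$.

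\textbf{Step 2 (one-group replacement formula).} For a sample of fixed size $n_1$ with mean $\ybh(1)$, replacing $x$ by $y$ changes the centred sum of squares by $(y-\ybh(1))^2-(x-\ybh(1))^2-(y-x)^2/n_1$. Hence the decrease of $\sh(1)$ equals
\[
\frac{1}{n_1-1}\Big[(x-\ybh(1))^2-(y-\ybh(1))^2+\tfrac{(y-x)^2}{n_1}\Big].
\]
The unobserved $y$ is the nuisance here. Setting $d=x-\ybh(1)$ and $u=y-\ybh(1)$, the bracket is a concave quadratic in $u$, because the coefficient of $u^2$ is $-1+1/n_1<0$. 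Maximising over $u$ (the maximiser is $u=-d/(n_1-1)$) gives the bracket $\le d^2\,n_1/(n_1-1)$. Consequently the treated-group contribution to the decrease of $\vhn$ is at most $g_1(i) := \frac{n}{(n_1-1)^2}(Y_i(1)-\ybh(1))^2$. Likewise the control-group contribution is at most $g_0(j) := \frac{n}{(n_0-1)^2}(Y_j(0)-\ybh(0))^2$. Therefore $(\vhn(Z)-\vhn(\tau_{ij}Z))_+\le g_1(i)+g_0(j)$.

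\textbf{Step 3 (squaring and summing).} I would apply a weighted Cauchy--Schwarz inequality, $(g_1+g_0)^2\le(1+t)g_1^2+(1+1/t)g_0^2$, together with $(Y-\ybh)^4\le (b-a)^2(Y-\ybh)^2$. This turns the fourth powers back into squared deviations. Summing over the $n_0$ choices of $j$ for each treated $i$ gives $n_0\sum_{i:Z_i=1}(Y_i(1)-\ybh(1))^2 = n_0(n_1-1)\sh(1)$, and symmetrically $n_1(n_0-1)\sh(0)$ for the control group. The result is a bound of the form $2(b-a)^2[\kappa_1(t)\,\sh(1)/(n_1/n)+\kappa_0(t)\,\sh(0)/(n_0/n)]$. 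Finally I choose $t$ so that $\kappa_1(t)=\kappa_0(t)$, or more precisely bound by the max and then optimise over $t$. The common value should yield $c(n_0,n_1)$; the structure $(n_0^2+n_1^2)^2$ over a sum of two terms is exactly what the Cauchy--Schwarz optimisation $(\sqrt{A}+\sqrt{B})^2$-type balancing produces.

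\textbf{Main obstacle.} The delicate point is the bookkeeping in Step 3, namely matching the stated constant $c(n_0,n_1)$ exactly. This may require keeping the sharper per-group bound $d^2 n_1/(n_1-1)$ rather than a cruder one. It may also require splitting $(g_1+g_0)^2$ with weights proportional to $n_1^2$ and $n_0^2$ rather than optimising over $t$ freely. If the constant does not come out exactly, I would first revisit whether the maximisation over the unobserved $y$ in Step 2 should be done jointly with the squaring. In any case the argument gives a self-bounding inequality of the required form.
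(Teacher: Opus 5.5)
Your Steps 1 and 2 are correct and reproduce the paper's per-pair bound exactly. The maximiser $u=-d/(n_1-1)$ corresponds to $y=\ybh_{-i}(1)$, so your $g_1(i)=\frac{n}{(n_1-1)^2}(Y_i(1)-\ybh(1))^2$ equals $\frac{n}{n_1^2}(Y_i(1)-\ybh_{-i}(1))^2$. The paper obtains the same quantity from Lemma~\ref{lem:emp-mean-perturb} by dropping the term $-(Y_j(1)-\ybh_{-i}(1))^2$. The gap is in Step 3. It does prove a self-bounding inequality, but not with $c(n_0,n_1)$, and your expectation that balancing ``should yield'' $c(n_0,n_1)$ is false. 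There are two separate losses. First, $(Y-\ybh)^4\le(b-a)^2(Y-\ybh)^2$ is too crude. Since $Y_i(1)-\ybh(1)=\frac{n_1-1}{n_1}(Y_i(1)-\ybh_{-i}(1))$, one actually has $|Y_i(1)-\ybh(1)|\le\frac{n_1-1}{n_1}(b-a)$. The paper therefore applies boundedness to $A_i:=(Y_i(1)-\ybh_{-i}(1))^2\le(b-a)^2$ and $B_j:=(Y_j(0)-\ybh_{-j}(0))^2\le(b-a)^2$. Your version inflates the two diagonal coefficients by $n_1^2/(n_1-1)^2$ and $n_0^2/(n_0-1)^2$, so you miss $c(n_0,n_1)$ even when $n_0=n_1$.

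The second loss is more fundamental. Even with the sharp deviation bound, Young's inequality $(g_1+g_0)^2\le(1+t)g_1^2+(1+1/t)g_0^2$, optimised over $t$, only gives the constant $n\bigl(\frac{n_0}{n_1(n_1-1)}+\frac{n_1}{n_0(n_0-1)}\bigr)$. A short computation shows this equals $c(n_0,n_1)\bigl(1+\frac{n_0^2n_1^2(n_0-n_1)^2}{(n_0^2+n_1^2)^2(n_0-1)(n_1-1)}\bigr)$, which is strictly larger whenever $n_0\neq n_1$. Weights proportional to $n_1^2,n_0^2$ are just one particular $t$ and cannot help.

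The missing idea is to expand the square and bound the cross term using boundedness directly: $A_iB_j\le(b-a)^2\{\lambda A_i+(1-\lambda)B_j\}$ for $\lambda\in[0,1]$, i.e.\ $xy\le M\min\{x,y\}$, rather than passing through $A_i^2,B_j^2$. This dominates Young. For $x,y\in[0,M]$, Young gives $2xy\le sx^2+y^2/s\le M(sx+y/s)$, whereas $\lambda=s/(s+1/s)$ gives $2\lambda\le s$ and $2(1-\lambda)\le 1/s$, because $s+1/s\ge 2$. Choosing $\lambda$ to equalise the coefficients of $\vhn^1$ and $\vhn^0$ then produces exactly $c(n_0,n_1)$.

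Be aware that this step needs $\lambda\in[0,1]$. The paper's equalising $\lambda$ satisfies this only if $n_0^2(n_0-1)\le(n_1-1)(n_1^2+2n_0^2)$ and the same holds with $n_0,n_1$ interchanged. For $n_1=2$, $n_0=10$ it equals $-87/34$, and the convex bound then fails at $A_i=(b-a)^2$, $B_j=0$. So for such unbalanced designs the paper's argument as written also only supports a larger constant, e.g.\ $n\,n_0/(n_1(n_1-1))$ after clipping $\lambda$ to $0$.
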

\begin{proof}
	We abbreviate the left-hand side of the inequality as $L(Z) := \sum_{i,j=1}^n \big(\vhn(Z)-\vhn(\tau_{ij}Z)\big)_+^2$. First, we notice that only index pairs $(i,j)$ with $Z \neq \tau_{ij} Z$ contribute to the sum; hence,
	\begin{align*}
		L(Z) &= \sum_{i,j=1}^n \big(\vhn(Z)-\vhn(\tau_{ij}Z)\big)_+^2 \one \{Z \neq \tau_{ij} Z\}\\
		&= \sum_{i,j=1}^n \big(\vhn(Z)-\vhn(\tau_{ij}Z)\big)_+^2 \Big(Z_i (1-Z_j) + (1-Z_i)Z_j\Big)\\
		&= 2\sum_{i,j=1}^n Z_i (1-Z_j) \big(\vhn(Z)-\vhn(\tau_{ij}Z)\big)_+^2,
	\end{align*}
	where the last step follows from the symmetry in the summation. Next, we separate the Neyman variance estimator into its two summands which are given by
    \begin{equation*}
        \vhn = \vhn^1+\vhn^0 := \frac{n}{n_1(n_1-1)}\sum_{i \in I_1} (Y_i(1)-\ybh(1))^2 + \frac{n}{n_0(n_0-1)}\sum_{i \in I_0} (Y_i(0)-\ybh(0))^2,
    \end{equation*}
    where $I_k := \{i\colon Z_i=k\}$ for $k\in \{0,1\}$. We fix indices $i\in I_1$ and $j \in I_0$ and consider the first summand under $Z$ and the swapped treatment $\tau_{ij}Z$:
    \begin{align*}
        \vhn^1(Z) &= \frac{n}{n_1(n_1-1)}\sum_{k \in I_1} (Y_k(1)-\ybh(1))^2,\\
        \vhn^1(\tau_{ij}Z) &= \frac{n}{n_1(n_1-1)}\sum_{k \in I_1 \setminus \{i\} \cup \{j\}} (Y_k(1)-\ybh_{-i+j}(1))^2.
    \end{align*}
    Here, $\ybh(1)$ denotes the average of the $Y(1)$-potential outcomes over the indices $I_1$ and $\ybh_{-i+j}(1)$ is the average over the indices $I_1\setminus\{i\} \cup \{j\}$. Applying Lemma~\ref{lem:emp-mean-perturb} with $I = I_1$ and $l=i$ to the first expression and with $I=I_1 \setminus \{i\} \cup \{j\}$ and $l=j$ to the second, we obtain
    \begin{align*}
        \vhn^1(Z) &= \frac{n}{n_1(n_1-1)}\left[\sum_{l' \in I_1 \setminus\{i\}} (Y_{l'}(1)-\ybh_{-i}(1))^2 + \frac{n_1-1}{n_1}(Y_i(1)-\ybh_{-i}(1))^2\right],\\
        \vhn^1(\tau_{ij}Z) &= \frac{n}{n_1(n_1-1)} \left[\sum_{l' \in I_1 \setminus \{i\} \cup \{j\}} (Y_{l'}(1)-\ybh_{-i}(1))^2 + \frac{n_1-1}{n_1} (Y_j(1)-\ybh_{-i}(1))^2\right].
    \end{align*}
    We take the difference and get
    \begin{equation*}
        \vhn^1(Z) - \vhn^1(\tau_{ij}Z) = \frac{n}{n_1^2}\Big[(Y_i(1)-\ybh_{-i}(1))^2-(Y_j(1)-\ybh_{-i}(1))^2\Big].
    \end{equation*}
    Following the same steps for $\vhn^0$, we obtain
    \begin{equation*}
        \vhn^0(Z) - \vhn^0(\tau_{ij}Z) = \frac{n}{n_0^2}\Big[(Y_j(0)-\ybh_{-j}(0))^2-(Y_i(0)-\ybh_{-j}(0))^2\Big].
    \end{equation*}

    We can now return to $L(Z)$, plug in our findings above and estimate the resulting expression using the monotonicity of the positive part:
    \begin{align*}
        L(Z) &= \begin{aligned}[t]
            2 n^2 \sum_{i,j=1}^n Z_i (1-Z_j) \bigg(&\frac{(Y_i(1)-\ybh_{-i}(1))^2-(Y_j(1)-\ybh_{-i}(1))^2}{n_1^2}\\
            &+ \frac{(Y_j(0)-\ybh_{-j}(0))^2-(Y_i(0)-\ybh_{-j}(0))^2}{n_0^2}\bigg)_+^2
        \end{aligned}\\[1ex]
        &\leq 2 n^2 \sum_{i,j=1}^n Z_i (1-Z_j) \left(\frac{(Y_i(1)-\ybh_{-i}(1))^2}{n_1^2}
            + \frac{(Y_j(0)-\ybh_{-j}(0))^2}{n_0^2}\right)^2.\\
    \end{align*}
     
     Next, we expand the square and find an upper bound using that $\lvert Y_i(k) - \yb_{-i}(k) \rvert \leq b-a$ for all $i \in \{1,\ldots,n\}$ and $k\in \{0,1\}$. Moreover, we can replace the ``leave-one-out'' averages with their usual counter-parts via $\ybh_{-i}(1)= \frac{n_1\ybh(1)-Y_i(1)}{n_1-1}$ and $\ybh_{-j}(0)= \frac{n_0\ybh(0)-Y_j(0)}{n_0-1}$ and subsequently gather terms:
     {\allowdisplaybreaks
    \begin{align*}
        L(Z) &= \begin{aligned}[t]
        \frac{2 n_0 n^2}{n_1^4} &\sum_{i \in I_1} (Y_i(1)-\ybh_{-i}(1))^4 + \frac{2 n_1 n^2}{n_0^4} \sum_{j \in I_0} (Y_j(0)-\ybh_{-j}(0))^4 \\
        &+ \frac{4n^2}{n_1^2 n_0^2} \sum_{i \in I_1} \sum_{j \in I_0} (Y_i(1)-\ybh_{-i}(1))^2 (Y_j(0)-\ybh_{-j}(0))^2
        \end{aligned}\\
        &\leq\begin{aligned}[t]
        2 (b-a)^2 n^2\Bigg[
        \frac{n_0}{n_1^4} &\sum_{i \in I_1} (Y_i(1)-\ybh_{-i}(1))^2 + \frac{n_1}{n_0^4} \sum_{j \in I_0} (Y_j(0)-\ybh_{-j}(0))^2 \\
        &+ \frac{2}{n_1^2 n_0^2} \sum_{i \in I_1} \sum_{j \in I_0} \lambda (Y_i(1)-\ybh_{-i}(1))^2 + (1-\lambda) (Y_j(0)-\ybh_{-j}(0))^2\Bigg]
        \end{aligned}\\
        &=\begin{aligned}[t]
        2 (b-a)^2 n^2\Bigg[
        &\frac{n_0}{n_1^2(n_1-1)^2} \sum_{i \in I_1} (Y_i(1)-\ybh(1))^2 + \frac{n_1}{n_0^2(n_0-1)^2} \sum_{j \in I_0} (Y_j(0)-\ybh(0))^2 \\
        & + \frac{2 \lambda}{(n_1-1)^2n_0} \sum_{i \in I_1} Y_i(1)-\ybh(1))^2 + \frac{2 (1-\lambda)}{(n_0-1)^2n_1} \sum_{j \in I_0} Y_j(0)-\ybh(0))^2\Bigg]
        \end{aligned}\\
        &=2(b-a)^2 n \left[\left(\frac{n_0}{n_1(n_1-1)} + \frac{2\lambda n_1}{n_0 (n_1-1)}\right) \vhn^1 + \left(\frac{n_1}{n_0(n_0-1)}+\frac{2(1-\lambda)n_0}{n_1(n_0-1)}\right) \vhn^0\right].
    \end{align*}
    }
    Here, we have introduced the hyper-parameter $\lambda \in \R$ which trades off the contributions of the two diagonal terms to the bound on the cross term. We can choose it so that the two constants in front of $\vhn^1$ and $\vhn^0$ agree. This is achieved at
    \begin{equation*}
        \lambda = \frac{1}{2} \frac{n_1^2(n_1-1) - n_0^2(n_0-1) + 2n_0^2(n_1-1)}{n_0^2(n_1-1)+n_1^2(n_0-1)}.
    \end{equation*}
    Inserting this choice of $\lambda$ into the upper bound of $L(Z)$, we arrive at the result
    \begin{equation*}
        L(Z) \leq 2 (b-a)^2 \frac{n\,(n_0^2+n_1^2)^2}{n_0n_1^3(n_0-1) + n_1 n_0^3(n_1-1)} (\vhn^1+\vhn^0).
    \end{equation*}
\end{proof}

\begin{theorem}[Modified log-Sobolev inequality]\label{thm:mod-log-sob}
Define the entropy of a non-negative random variable $X$ as $\ent[X] := \E[X\log(X)]-\E[X]\log(\E[X])$, let $g \colon \{0,1\}^n \to \R$ be a positive function and $Z \sim \mathrm{CRE}(n_0,n_1)$. Then, $g$ fulfills the modified log-Sobolev inequality
\begin{equation*}
    \ent[g(Z)] \leq\, \frac{1}{2n}\,  \E\left[\sum_{1\leq i < j \leq n} (g(Z)-g(\tau_{ij}Z))(\log g(Z)-\log g(\tau_{ij}Z))\right].
\end{equation*}
\end{theorem}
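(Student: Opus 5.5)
The plan is to obtain this statement directly from the sharp log-Sobolev inequality for the multislice in \citet{salez_sharp_2021}, specialised to two colours. The work is in translating between his Markov-chain formulation and the design-based formulation used here, and in tracking constants.

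\emph{Step 1: the law of $Z$ as a stationary distribution.} Under $Z\sim\cre(n_0,n_1)$, $Z$ is uniform on the slice $\{z\in\{0,1\}^n:\sum_i z_i=n_1\}$. This is the two-colour multislice, i.e.\ the $n_1$-particle Bernoulli--Laplace model on $n$ sites. I would use the continuous-time chain with generator
\begin{equation*}
\mathcal{L}g(z)=\frac{1}{n}\sum_{1\leq i<j\leq n}\big(g(\tau_{ij}z)-g(z)\big).
\end{equation*}
Each $\tau_{ij}$ is an involution that maps the slice to itself and preserves the uniform measure. The chain is therefore reversible with respect to $\cre(n_0,n_1)$.

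\emph{Step 2: the Dirichlet form.} Using that $Z$ and $\tau_{ij}Z$ have the same law, a symmetrisation gives
\begin{equation*}
\mathcal{E}(f,h):=-\E[f(Z)\,\mathcal{L}h(Z)]=\frac{1}{2n}\,\E\Big[\sum_{i<j}\big(f(Z)-f(\tau_{ij}Z)\big)\big(h(Z)-h(\tau_{ij}Z)\big)\Big].
\end{equation*}
With $f=g$ and $h=\log g$, the right-hand side of the claimed inequality is exactly $\mathcal{E}(g,\log g)$. The statement is thus the modified log-Sobolev inequality $\ent[g]\leq\mathcal{E}(g,\log g)$, i.e.\ MLSI constant $1$ for this normalisation.

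\emph{Step 3: invoke Salez and pass to the modified form.} Salez proves a log-Sobolev inequality of the form $\ent[f^2]\leq C\,\mathcal{E}(f,f)$ on the multislice, with a sharp constant depending only on the rate normalisation (not on $n_0,n_1$ in the relevant regime). Apply it to $f=\sqrt g$. Then use, for each pair $(i,j)$, the elementary inequality
\begin{equation*}
4\big(\sqrt{x}-\sqrt{y}\big)^2\leq (x-y)(\log x-\log y),\qquad x,y>0.
\end{equation*}
This yields $\mathcal{E}(\sqrt g,\sqrt g)\leq \tfrac14\mathcal{E}(g,\log g)$, and hence $\ent[g]\leq \tfrac{C}{4}\,\mathcal{E}(g,\log g)$. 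If Salez's statement is already given as an entropy-production (modified) inequality, I would cite it directly and skip this step.

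\emph{Main obstacle.} The delicate point is matching conventions so that the final constant is exactly $\frac{1}{2n}$. The issues are: Salez's time scaling (total jump rate and whether the sum runs over ordered or unordered pairs), whether his Dirichlet form carries a factor $\frac12$, and whether his constant refers to $\ent[f^2]$ or $\ent[f]$. I would first rewrite his generator in the form of $\mathcal{L}$ above, rescale time to absorb any discrepancy, and check the result on a single swap with $n=2$, $n_1=1$.

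If $C/4$ comes out larger than $1$, I would look for a direct modified log-Sobolev bound for the Bernoulli--Laplace model in the literature instead, such as the entropy-decay estimates in \citet{sambale_concentration_2022}, and adjust the constant accordingly.
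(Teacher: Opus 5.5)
Your Steps~1 and~2 are sound. With $\mathcal{L}g(z)=\frac1n\sum_{i<j}\big(g(\tau_{ij}z)-g(z)\big)$, the right-hand side of the statement is exactly $\mathcal{E}(g,\log g)$. What must be shown is therefore $\ent[g(Z)]\leq\mathcal{E}(g,\log g)$ with constant~$1$, uniformly in $(n_0,n_1)$.

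The gap is Step~3: the log-Sobolev constant of this chain is \emph{not} uniform in $(n_0,n_1)$. By \citet{lee_logarithmic_1998} (extended to all multislices by the main theorem of \citet{salez_sharp_2021}), the constant $C$ in $\ent[f^2]\leq C\,\mathcal{E}(f,f)$ is of order $\log\big(n^2/(n_0n_1)\big)$. This diverges for unbalanced designs, which is exactly the small-$\pi$ regime the paper cares about. Your reduction via $f=\sqrt g$ therefore only yields $\ent[g]\leq\frac{C}{4}\,\mathcal{E}(g,\log g)$ with $C/4$ unbounded. Even for balanced designs it would need the exact bound $C\leq 4$, which a result sharp only up to a universal factor does not supply.

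The case $n_1=1$ makes this concrete. The chain then resamples the particle's position uniformly at rate~$1$, so $\mathcal{E}(f,f)=\var(f)$. Diaconis and Saloff-Coste computed its log-Sobolev constant as $C=\log(n-1)/(1-2/n)$, whence $C/4>1$ for all $n\geq 48$. Yet the modified inequality with constant~$1$ holds trivially there. Indeed, $\mathcal{E}(g,\log g)=\E[g\log g]-\E[g]\,\E[\log g]\geq\ent[g]$, because $\E[\log g]\leq\log\E[g]$ by Jensen. The modified log-Sobolev constant is dimension-free while the ordinary one is not, and passing through $\ent[f^2]$ loses exactly the uniformity the statement asserts.

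The repair is the fallback you mention, and it is what the paper does: cite the entropy-production inequality itself. The paper's proof observes that $\cre(n_0,n_1)$ is the uniform law on the two-colour multislice. It then invokes Lemma~1.3 of \citet{salez_sharp_2021}, which gives $\ent[g]\leq\mathcal{E}(g,\log g)$ in this normalisation for every $(n_0,n_1)$. Your convention matching in Steps~1--2 is the bookkeeping that makes that citation apply. The detour through the ordinary log-Sobolev inequality, and the appeal to \citet{sambale_concentration_2022}, should be dropped.
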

\begin{proof}
    The completely randomized treatment assignment random vector is uniformly distributed over the 2-multislice. \cite{salez_sharp_2021}[Lem.~1.3] proved that regardless of $(n_0,n_1)$ the constant in the modified log-Sobolev inequality is always upper bounded by $1$ which yields the result.
\end{proof}

\begin{proof}[Proof of Theorem~\ref{thm:neyman-conc}] Let $\lambda \in \R$. We invoke Theorem~\ref{thm:mod-log-sob} with the function $g = e^{\lambda \vhn}$ and obtain a modified log-Sobolev inequality for the Neyman variance estimator. We modify its right-hand side so that the summation extends over all index pairs $(i,j)$ and so that it involves the positive part in view of Lemma~\ref{lem:self-bounded}:
    \begin{align*}
        \ent[e^{\lambda \vhn(Z)}] &\leq \frac{1}{2n}\E\left[\sum_{1\leq i<j\leq n} (e^{\lambda \vhn(Z)}-e^{\lambda \vhn(\tau_{ij}Z)})\,(\lambda\vhn(Z)-\lambda\vhn(\tau_{ij}Z))\right]\\
        &= \frac{1}{4n}\E\left[\sum_{i,j=1}^n (e^{\lambda \vhn(Z)}-e^{\lambda \vhn(\tau_{ij}Z)})\,(\lambda\vhn(Z)-\lambda\vhn(\tau_{ij}Z))\right]\\
        &=\begin{aligned}[t]
            \frac{1}{4n} \sum_{i,j=1}^n &\E\left[(e^{\lambda \vhn(Z)}-e^{\lambda \vhn(\tau_{ij}Z)})\,(\lambda\vhn(Z)-\lambda\vhn(\tau_{ij}Z))_+\right]\\
            &-\E\left[(e^{\lambda \vhn(Z)}-e^{\lambda \vhn(\tau_{ij}Z)})\,(\lambda\vhn(Z)-\lambda\vhn(\tau_{ij}Z))_-\right]
        \end{aligned}\\
        &=\begin{aligned}[t]
            \frac{1}{4n} \sum_{i,j=1}^n &\E\left[(e^{\lambda \vhn(Z)}-e^{\lambda \vhn(\tau_{ij}Z)})\,(\lambda\vhn(Z)-\lambda\vhn(\tau_{ij}Z))_+\right]\\
            &-\E\left[(e^{\lambda \vhn(\tau_{ij}Z)}-e^{\lambda \vhn(Z)})\,(\lambda\vhn(\tau_{ij}Z)-\lambda\vhn(Z))_-\right]
        \end{aligned}\\
        &= \frac{1}{2n}\E\left[\sum_{i,j=1}^n (e^{\lambda \vhn(Z)}-e^{\lambda \vhn(\tau_{ij}Z)})\,(\lambda\vhn(Z)-\lambda\vhn(\tau_{ij}Z))_+\right]\\
        &\leq \frac{\lambda^2}{2n}\E\left[\sum_{i,j=1}^n e^{\lambda \vhn(Z)}(\vhn(Z)-\vhn(\tau_{ij}Z))_+^2\right]
    \end{align*}
    The first equality stems from the symmetry of the swap operator, i.e.\ $\tau_{ij} = \tau_{ji}$, and that the ``diagonal swaps'' do not contribute, i.e.\ $\tau_{ii}Z=Z$. We obtain the positive part since $Z$ is uniformly distributed over the 2-multislice and thus $\tau_{ij} Z$ and $Z$ have the same distribution. (In the literature on concentration inequalities for random walks, a similar argument is used when the generator of the random walk satisfies a detailed-balance condition, see \cite{adamczak_concentration_2021} for instance.) The last inequality follows from $(e^x-e^y)(x-y)_+ \leq e^x(x-y)_+^2$.

    Invoking Lemma~\ref{lem:self-bounded}, we find the following bound on the entropy
    \begin{equation*}
        \ent[e^{\lambda \vhn(Z)}] \leq \underbrace{\frac{(b-a)^2 c(n_0,n_1)}{n}}_{=:c}\,\lambda^2\E\left[e^{\lambda \vhn(Z)}\, \vhn(Z)\right].
    \end{equation*}
    To prove a concentration inequality for $\vhn^{1/2}$, we use Herbst's argument for self-bounding functions. 
    To keep the notation concise, we suppress the dependence of $\vhn$ on $Z$ in the following. Using the definition of the entropy, we get
    \begin{equation*}
        (\lambda-c\lambda^2) \E[\vhn e^{\lambda \vhn}] - \E[e^{\lambda \vhn}]\log\E[e^{\lambda \vhn}] \leq 0.
    \end{equation*}
    We can re-express this inequality in terms of the log (centred) moment generating function (log-mgf) and its derivative, which are given by
    \begin{align*}
        G(\lambda) &:= \log \E[e^{\lambda(\vhn-E[\vhn])}] = \log\E[e^{\lambda \vhn}]-\lambda\E[\vhn],\\
        G'(\lambda) &= \frac{\E\left[(\vhn-\E[\vhn])\, e^{\lambda \vhn}\right]}{\E\left[e^{\lambda \vhn}\right]} = \frac{\E[\vhn e^{\lambda \vhn}]}{\E[e^{\lambda \vhn}]}-\E[\vhn],
    \end{align*}
    respectively. Plugging these into the inequality, we obtain
    \begin{equation*}
        (\lambda-c\lambda^2)\, \Big(\E[e^{\lambda \vhn}]\,(G'(\lambda)+\E[\vhn])\Big) - \E[e^{\lambda \vhn}] \Big(G(\lambda) + \lambda \E[\vhn]\Big) \leq 0.
    \end{equation*}
    We further re-arrange this expressions by dividing by $\E[e^{\lambda \vhn}]$ and $(1- c\lambda)^2$ and get
    \begin{equation*}
        \frac{\lambda}{1-c\lambda}G'(\lambda) - \frac{G(\lambda)}{(1-c\lambda)^2} \leq \frac{\lambda^2c\,\E[\vhn]}{(1-c\lambda)^2}.
    \end{equation*}
    This differential inequality matches the form in Lemma~6.25 in \cite{boucheron_concentration_2013} with $f(\lambda) = \frac{\lambda}{1-c\lambda}$ and $g(\lambda) = c\E[\vhn]$ yielding
    \begin{equation*}
        G(\lambda) \leq \frac{\lambda^2c\,\E[\vhn]}{1-c\lambda},
    \end{equation*}
    for all $\lambda < \frac{1}{c}$. This relationship shows that $\vhn-\E[\vhn]$ is a sub-Gamma random variable on the right tail and can be used to construct a concentration inequality. Since we are interested in the left tail, however, we consider the log-mgf of $\E[\vhn]-\vhn$ denoted $\tilde{G}(\lambda)$. Since $G(-\lambda)=\tilde{G}(\lambda)$, we obtain
    \begin{equation*}
        \tilde{G}(\lambda) \leq \frac{\lambda^2 c \E[\vhn]}{1+ c\lambda}
    \end{equation*}
    for all $\lambda \geq 0$. We can use this inequality in the following Chernoff bound
    \begin{equation*}
        \PR\big(\E[\vhn]-\vhn \geq t\big) \leq \exp({\tilde{G}(\lambda)-\lambda t}) \leq \exp\left({\frac{\lambda^2 c \E[\vhn]}{1+ c\lambda} - \lambda t}\right),
    \end{equation*}
    where $t \in \R$. We can now solve the right-hand side for $\delta \in (0,1)$ and thus obtain that with probability $1-\delta$,
    \begin{alignat*}{3}
        &&\E[\vhn] &\leq \vhn + \frac{\log(1/\delta)}{\lambda} + \frac{\lambda c }{1+c\lambda}\E[\vhn],\\
        &\Leftrightarrow&\quad\E[\vhn] &\leq \left(\vhn + \frac{\log(1/\delta)}{\lambda}\right)(1+c\lambda) = \vhn + c\lambda \vhn + \frac{\log(1/\delta)}{\lambda} + c\log(1/\delta).
    \end{alignat*}
    Since we aim to get a concentration result for the square root of $\vhn$, we choose $\lambda$ so that the two terms in the middle match. This is achieved at $\lambda = \sqrt{\log(1/\delta)/(c\vhn)}$ and yields the inequality
    \begin{equation*}
        \E[\vhn] \leq \left(\vhn^{1/2} + \sqrt{c\log(1/\delta)}\right)^2.
    \end{equation*}
    Lastly, the Neyman variance estimator is conservative, i.e.\ $V \leq \E[\vhn]$, and we can plug in the definition of $c$. Thus, we obtain that with probability $1-\delta$,
    \begin{equation*}
        V^{1/2} \leq \E[\vhn]^{1/2} \leq \vhn^{1/2} + (b-a) \sqrt{\frac{c(n_0,n_1)}{n}\log(1/\delta)}.
    \end{equation*}
\end{proof}

\subsection{Confidence Intervals based on \cite{barber_hoeffding_2024}}\label{app:cre-bernstein-barber}
\begin{proof}[Proof of Proposition~\ref{prop:cre-bernstein}]
As we have seen in the proof of Proposition~\ref{prop:cre-hoeffding}, we can express the difference of the HT estimator and the SATE as follows
\begin{equation*}
    \tauh_n-\tau = \tauh_n^m-\tau = \sum_{i=1}^n\frac{1}{n} \left(\frac{Y_i(1)-\yb(1)}{\pi}+\frac{Y_i(0)-\yb(0)}{1-\pi}\right)(Z_i-\pi)=:\sum_{i=1}^n w_i X_i.
\end{equation*}
Here, we set $m=(1-\pi)\yb(1)+\pi \yb(0)$,
where $\yb(k):=\frac{1}{n}\sum_{i=1}^n Y(k)$ for $k\in \{0,1\}$. Since $X_i \in \{-\pi,1-\pi\}$, the exchangeable random variables $X_i$ are supported on an interval of length~$1$. Furthermore, $\xb = 0$ and the empirical variance of the $X_i$ is given by
\begin{equation*}
    \sigma_X^2 = \frac{1}{n}\sum_{i=1}^n (Z_i-\pi)^2 = \frac{1}{n} \left(n_1 \frac{n_0^2}{n^2}+n_0\frac{n_1^2}{n^2}\right) = \frac{n_0\,n_1}{n^2}.
\end{equation*}

Since the weights $w_i$ depend on unobserved potential outcomes, we need to find known or estimable upper bounds on their $L^\infty$- and $L^2$-norms to apply Theorem~\ref{thm:rina-bernstein}. First, the maximum value of the~$w_i$ can be estimated as follows
\begin{align*}
    \lVert w \rVert_\infty \leq \frac{1}{n}\left(\frac{1}{\pi}+\frac{1}{1-\pi}\right) \frac{n-1}{n} (b-a) = \frac{n-1}{n_0\,n_1}(b-a).
\end{align*}
Next, we re-write the squared $L^2$-norm of the weights:
\begin{align*}
    \lVert w \rVert_2^2 &= \frac{1}{n^2} \sum_{i=1}^n \frac{(Y_i(1)-\yb(1))^2}{\pi^2} + \frac{(Y_i(0)-\yb(0))^2}{(1-\pi)^2} + \frac{2(Y_i(1)-\yb(1))(Y_i(0)-\yb(0))}{\pi(1-\pi)}\\
    &= \frac{n-1}{n_1^2}S^2(1)+\frac{n-1}{n_0^2}S^2(0)+\frac{2(n-1)}{n_0\,n_1}S^2(1,0).
\end{align*}
Since we cannot consistently estimate this quantity due to the cross-term $S^2(1,0)$, we use an upper bound, see \cite[Lem.~4.1]{ding_first_2024} for instance:
\begin{equation*}
    \lVert w \rVert_2^2 \leq S^2(1)\left(\frac{n-1}{n_1^2}+\frac{n-1}{n_0\,n_1}\right) + S^2(0)\left(\frac{n-1}{n_0^2}+\frac{n-1}{n_0\,n_1}\right) = \frac{n-1}{n_0\,n_1} \E[\vhn].
\end{equation*}
We can now derive the explicit Bernstein and the implicit Bennett confidence interval.

    \textit{Bernstein Interval}
    We apply Theorem~\ref{thm:rina-bernstein} with $N=n$ and the bounds on the respective expressions above and obtain that with probability $1-2\delta$,
    \begin{multline*}
        \lvert \tauh_n-\tau \rvert
        \leq \sqrt{2
        \left(\frac{n_0\,n_1}{n^2}+4\epsilon_n'\right) \frac{n-1}{n_0\,n_1} \E[\vhn](1+\epsilon'_n)\log(1/\delta)}\\[-1.5ex] + \frac{b-a}{3} \frac{n-1}{n_0\,n_1}
        (1+\epsilon_n')\log(1/\delta).
    \end{multline*}
    Moreover, due to Theorem~\ref{thm:neyman-conc},
    with probability $1-\delta$:
    \begin{equation*}
 \E[\vhn]^{1/2} \leq \vhn^{1/2} +(b-a) \sqrt{\frac{c(n_0,n_1)\log(1/\delta)}{n}}.
    \end{equation*}
    Combining these two results with a union bound and gathering terms, we obtain
    \begin{align*}
        \lvert \tauh_n-\tau\rvert &\leq
        \begin{aligned}[t]
            &\sqrt{2\left(\frac{n_0\,n_1}{n^2}+4\epsilon_n'\right) \frac{n(n-1)}{n_0\,n_1}(1+\epsilon_n')\frac{\vhn}{n}
        \log(1/\delta)}\\
        &\hspace{-1cm}+\frac{b-a}{n}\log(1/\delta)\left[\sqrt{2\,c(n_0,n_1)\left(\frac{n_0\,n_1}{n^2}+4\epsilon_n'\right) \frac{n(n-1)}{n_0\,n_1}(1+\epsilon_n')}+\frac{1+\epsilon_n'}{3}\frac{n(n-1)}{n_0\,n_1}\right]
        \end{aligned}\\
        &=
        \begin{aligned}[t]
            &\sqrt{\frac{2(1+\epsilon_n'')(1+\epsilon_n')\vhn\log(1/\delta)}{n}}\\
        &\hspace{1.7cm}+ \frac{b-a}{n}\log(1/\delta)\left[\frac{1+\epsilon_n'}{3}\frac{n(n-1)}{n_0\,n_1}+\sqrt{2c(n_0,n_1)(1+\epsilon_n'')(1+\epsilon_n')}\right]
        \end{aligned}
    \end{align*}
    with probability $1-3\delta$. Setting $\delta = \alpha/3$ yields the Bernstein confidence interval.

    \textit{Bennett Interval}
        We invoke the second part of Theorem~\ref{thm:rina-bernstein} with the $N=n$ and the expressions above and find that with probability $1-2\delta$,
    \begin{align*}
        \lvert \tauh_n-\tau\rvert
        &\leq \frac{\left(\frac{n_0\,n_1}{n^2}+4\epsilon_n'\right) \frac{n-1}{n_0\,n_1} \E[\vhn]}{\frac{n-1}{n_0\,n_1}(b-a)}\, h_2^{-1}\left(\frac{(1+\epsilon_n')\frac{(n-1)^2}{n_0^2n_1^2}(b-a)^2\log(1/\delta)}{\left(\frac{n_0\,n_1}{n^2}+4\epsilon_n'\right) \frac{n-1}{n_0\,n_1} \E[\vhn]}\right)\\
        &= \frac{(1+\epsilon_n'')\,n_0\,n_1\,\E[\vhn]}{n(n-1)(b-a)}\,h_2^{-1}\left(\frac{(1+\epsilon_n')\,n(n-1)^2\,(b-a)^2 \log(1/\delta)}{(1+\epsilon_n'')\,n_0^2n_1^2\,\E[\vhn]}\right).
    \end{align*}
    Moreover, due to Theorem~\ref{thm:neyman-conc},
    \begin{equation*}
        \tilde{V} = \left(\vhn^{1/2}+(b-a)\sqrt{\frac{c(n_0,n_1)\log(1/\delta)}{n}}\right)^2
    \end{equation*}
    is larger than $\E[\vhn]$ with probability $1-\delta$.
    Combining these two results with a union bound and setting $\delta = \alpha/3$, we conclude the proof of the Bennett confidence interval.
\end{proof}

\subsection{Confidence Intervals based on \cite{bardenet_concentration_2015}}\label{app:cre-bernstein-bardenet}

We can also derive the empirical, explicit Bernstein and implicit Bennett interval using a concentration inequality from \citet{bardenet_concentration_2015} re-stated in Theorem~\ref{thm:bm}. As we can see in Figure~\ref{fig:cre-bernstein-comp}, these are wider than the ones based on \citet{barber_hoeffding_2024}'s concentration inequality, cf.\ Theorem~\ref{thm:rina-bernstein}.

\begin{figure}[htbp]
    \centering
    \includegraphics[scale=0.65]{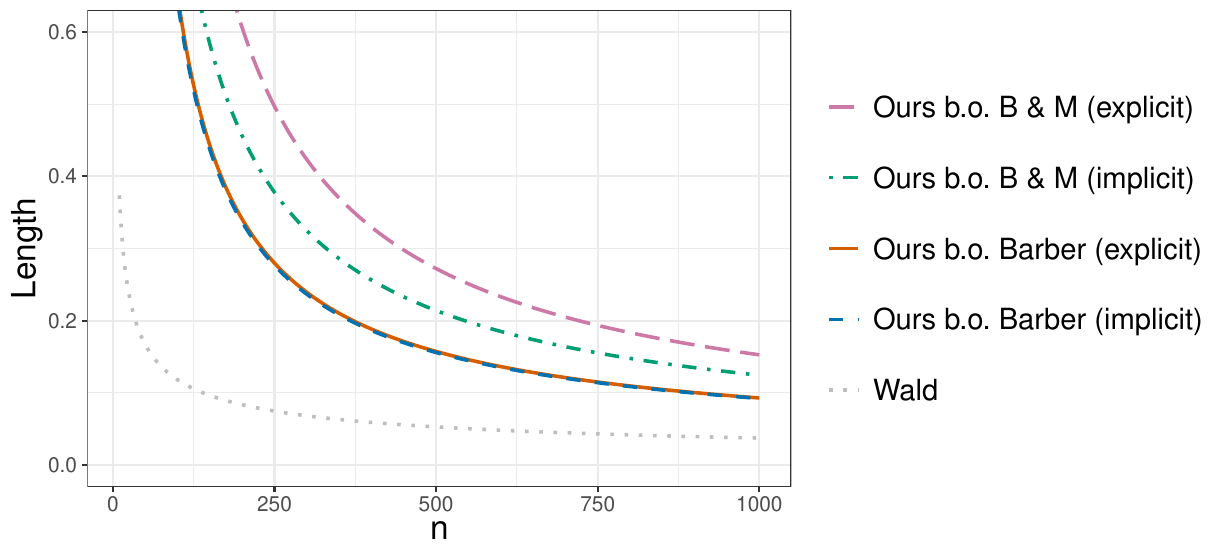}
    \caption{Length of confidence intervals in a completely randomized experiment as a function of $n$. We compare our intervals from Proposition~\ref{prop:cre-bernstein} based on (b.o.) \citet{barber_hoeffding_2024}'s concentration inequality with the intervals from Proposition~\ref{prop:cre-bernstein-bm} based on \citet{bardenet_concentration_2015}'s concentration inequality. We set $\alpha = 0.05, \pi=1/2$ and $[a,b] = [0,1]$. Moreover, we use $\vh = \sigma^2_{5,5}/(\pi(1-\pi))$, where $\sigma^2_{5,5}$ is the variance of a $\mathrm{Beta}(5,5)$-distribution.}
    \label{fig:cre-bernstein-comp}
\end{figure}

\begin{proposition}\label{prop:cre-bernstein-bm}
    Suppose $Z \sim \cre(n_0,n_1)$, where $n_0 \geq n_1 \geq 2$, and let $\alpha \in (0,1)$. Let $c(n_0,n_1)$ be defined as in Theorem~\ref{thm:neyman-conc} and set $\epsilon_n''' := \frac{1}{n_0}-\frac{1}{n}-\frac{1}{nn_0}$. Then, an explicit Bernstein $1-\alpha$ confidence interval for $\tau_n$ is given by
    \begin{equation*}
        \left[\tauh_n \pm \left(\sqrt{\frac{2(1+\epsilon'''_n) \vhn\log(4/\alpha)}{n}} + \frac{C (b-a)}{n}\log(4/\alpha)\right)\right],
    \end{equation*}
    where
    \begin{equation*}
        C = \sqrt{2(1+\epsilon_n''')c(n_0,n_1)}+ \frac{n(n-1)}{n_0n_1}\left(\frac{4}{3}+\sqrt{\frac{n_1(n_1-1)}{n(n_0+1)}}\right).
    \end{equation*}
    The tighter Bennett $1-\alpha$ confidence interval takes the form
    \begin{equation*}
        \left[\tauh_n\pm \frac{n_1^2}{(n-1)n_0}  \frac{\tilde{V}}{2(b-a)} h^{-1}_2\left(\frac{(n-1)^2}{n_1^2}\frac{4(b-a)^2\log(4/\alpha)}{n_1\,\tilde{V}}\right)\right],
    \end{equation*}
    where $h_2(u):=(1+u)\log(1+u)-u$ and
    \begin{align*}
        \tilde{V} = &(1+\epsilon_n''')\,\frac{n_1}{n}\,\left(\vhn^{1/2}+ (b-a)\sqrt{c(n_0,n_1)\log(4/\alpha)}\right)^2\\
        &+ \sqrt{\frac{(n-1)^3(n_1-1)n_1}{n_0^3\,n^3}}\left(\vhn^{1/2} + (b-a)\sqrt{c(n_0,n_1)\log(4/\alpha)}\right)(b-a)\sqrt{2\log(4/\alpha)}.
    \end{align*}
\end{proposition}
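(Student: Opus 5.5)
We would follow the template of the proof of Proposition~\ref{prop:cre-bernstein}, replacing \citet{barber_hoeffding_2024}'s inequality by Theorem~\ref{thm:bm}.

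\emph{Reduction to sampling without replacement.} With $m=(1-\pi)\yb(1)+\pi\yb(0)$, write
\begin{equation*}
\tauh_n-\tau_n=\sum_{i=1}^n w_i(Z_i-\pi),\qquad w_i=\frac{1}{n}\left(\frac{Y_i(1)-\yb(1)}{\pi}+\frac{Y_i(0)-\yb(0)}{1-\pi}\right).
\end{equation*}
Since $\sum_i w_i=0$, this equals $\sum_i w_i Z_i = \frac{n_1}{n}\cdot\frac{1}{n_1}\sum_{i\in I_1} (n w_i)$. Because $Z$ is uniform on the slice, $(nw_i)_{i\in I_1}$ is a random sample of size $k=n_1\le n/2$ without replacement from the centred population $x_i:=n w_i$ (this uses $n_0\ge n_1$). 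Theorem~\ref{thm:bm} is then applied with:
\begin{itemize}
\item the bound $B$ given by $\lvert x_i\rvert\le \frac{n(n-1)}{n_0n_1}(b-a)$, as in the $\lVert w\rVert_\infty$ bound in the proof of Proposition~\ref{prop:cre-bernstein} (up to possibly dropping the factor $(n-1)/n$);
\item the variance $\sigma^2=\frac1n\sum_i x_i^2 = n\lVert w\rVert_2^2 \le \frac{n(n-1)}{n_0n_1}\E[\vhn]$, from the same Neyman-type bound on the cross term used there.
\end{itemize}
Multiplying the bound of Theorem~\ref{thm:bm} by $n_1/n$ gives a bound with probability $1-3\delta$. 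Its leading term is
\begin{equation*}
\frac{n_1}{n}\sigma\sqrt{\frac{2(n-n_1+1)\log(1/\delta)}{n\,n_1}}
\le\sqrt{\frac{2(n_0+1)(n-1)}{n\,n_0\,n}\,\E[\vhn]\log(1/\delta)}.
\end{equation*}
We then expand $(n_0+1)(n-1)/(n_0 n)=1+\epsilon_n'''$ to identify the constant, checking that this matches the stated definition of $\epsilon_n'''$.

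\emph{Plug-in of the variance.} Theorem~\ref{thm:neyman-conc} gives, with probability $1-\delta$,
\begin{equation*}
\E[\vhn]^{1/2}\le \vhn^{1/2}+(b-a)\sqrt{c(n_0,n_1)\log(1/\delta)/n}.
\end{equation*}
Substituting this into the leading term splits it into the $\vhn$ part and a contribution $\sqrt{2(1+\epsilon_n''')c}\,(b-a)\log(1/\delta)/n$. Together with the lower-order term of Theorem~\ref{thm:bm}, rescaled by $n_1/n$ and using $B$, this produces the constant $C$. The second summand of $C$ comes from
\begin{equation*}
\frac{n_1}{n}B\left(\frac{4}{3n_1}+\sqrt{\frac{n_1-1}{n\,n_1(n_0+1)}}\right).
\end{equation*}
A union bound over the $3\delta$ event and the $\delta$ event, with $\delta=\alpha/4$, gives the Bernstein interval.

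\emph{Bennett interval.} We use the second part of Theorem~\ref{thm:bm}. In $v$ we substitute $\sigma\le(\frac{n(n-1)}{n_0n_1})^{1/2}\E[\vhn]^{1/2}$, together with $\sigma^2$ bounded the same way. Since $h_2^{-1}$ is such that $x\mapsto x\,h_2^{-1}(y/x)$ is increasing in $x$, we may replace $\E[\vhn]^{1/2}$ by its upper confidence bound. Collecting factors should reproduce $\tilde V$ and the scaling $\frac{n_1^2}{(n-1)n_0}\frac{1}{2(b-a)}$ after rescaling by $n_1/n$ and $k^2/(n-k)^2=n_1^2/n_0^2$.

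\emph{Expected main obstacle.} The main obstacle is bookkeeping. The constants in $\tilde V$ must be matched exactly, and some stated factors, such as the absence of the $1/\sqrt n$ inside $\tilde V$, may require care or may reflect a looser bound. We also need to verify the monotonicity needed for the plug-in; this can be checked from the convexity of $h_2$.
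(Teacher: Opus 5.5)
Your proposal is correct and follows the paper's proof. The paper works directly with the population $x_i=(Y_i(1)-\yb(1))+\frac{n_1}{n_0}(Y_i(0)-\yb(0))$, which is your $n w_i$ rescaled by $n_1/n$, applies Theorem~\ref{thm:bm} with $k=n_1$, $B=\frac{n-1}{n_0}(b-a)$ and $\sigma^2\le\frac{(n-1)n_1}{n\,n_0}\E[\vhn]$, and closes with Theorem~\ref{thm:neyman-conc} and a union bound at $\delta=\alpha/4$; the missing $1/n$ you flagged in $\tilde V$ is also present in the paper's own derivation, and since it only enlarges $\tilde V$, the monotonicity of $x\mapsto x\,h_2^{-1}(y/x)$ you identified keeps the stated interval valid, just slightly conservative.
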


\begin{proof}
    To find empirical concentration inequalities for the HT estimator that account for the variance, we use modified versions of the results that \cite{bardenet_concentration_2015} obtained for sampling without replacement. To this end, we manipulate $\tauh - \tau$ as follows
    \begin{align*}
        \tauh_n - \tau_n &= \frac{1}{n_1} \sum_{i=1}^n Z_i Y_i(1) - \frac{1}{n_0} \sum_{i=1}^n (1-Z_i) Y_i(0) - \frac{1}{n} \sum_{i=1}^n Y_i(1)-Y_i(0) \\
        &= \frac{1}{n_1} \sum_{i=1}^n \left(\underbrace{(Y_i(1)-\yb(1))+\frac{n_1}{n_0}(Y_i(0)-\yb(0))}_{=:x_i}\right) Z_i,
    \end{align*}
    where $\yb(k) = \frac{1}{n}\sum_{i=1}^n Y_i(k)$. This shows, that $\tauh-\tau$ can also be regarded as the average of~$n_1$ data points sampled without replacement from the population $(x_1,\ldots,x_n)$. Due to the definition of the $x_i$, their mean equals 0 -- $\mu = \frac{1}{n}\sum_{i=1}^n x_i =0$ -- and
    \begin{equation*}
        \lVert x \rVert_\infty \leq \frac{n-1}{n} (b-a) + \frac{n_1}{n_0}\frac{n-1}{n} (b-a) = \frac{n-1}{n_0} (b-a).
    \end{equation*}
    Moreover, the (finite population) variance is given by
    \begin{align*}
        \sigma^2 &= \frac{1}{n} \sum_{i=1}^n x_i^2\\
        &= \frac{1}{n} \sum_{i=1}^n (Y_i(1)-\yb(1))^2+\frac{n_1^2}{n_0^2} (Y_i(0)-\yb(0))^2 + 2\frac{n_1}{n_0}(Y_i(1)-\yb(1))(Y_i(0)-\yb(0))\\
        &= \frac{n-1}{n} S^2(1) + \frac{n_1^2}{n_0^2} \frac{n-1}{n}S^2(0) + 2\frac{n_1}{n_0}\frac{n-1}{n} S^2(1,0).
    \end{align*}
    Since $\sigma^2$ cannot be consistently estimated due to the cross-term $S^2(1,0)$, we use an upper bound, see \cite[Lem.~4.1]{ding_first_2024} for instance:
    \begin{equation}\label{eq:sigma-upper-bound}
        \sigma^2 
        \leq \frac{n-1}{n} \left(\frac{n}{n_0}S^2(1)+\frac{n \,n_1}{n_0^2}S^2(0)\right) = \frac{n-1}{n}\frac{n_1}{n_0} \E[\vhn].
    \end{equation}
    We are now in position to derive the explicit and implicit confidence intervals.

    \textit{Bernstein Interval} We apply Theorem~\ref{thm:bm} with $k=n_1$, $B=\frac{n-1}{n_0}(b-a)$ and obtain, that with probability $1-3\delta$
    \begin{equation*}
        \left\vert\frac{1}{n_1}\sum_{i=1}^n x_i Z_i\right\vert \leq \sigma \sqrt{\frac{n_0+1}{n\,n_1}2\log(1/\delta)}+\left(\frac{4}{3n_1} + \sqrt{\frac{n_1-1}{n\,n_1\,(n_0+1)}}\right) \frac{n-1}{n_0}(b-a)\log(1/\delta).
    \end{equation*}
    Moreover, due to Theorem~\ref{thm:neyman-conc}, with probability $1-\delta$:
    \begin{equation*}
        \sigma \leq \sqrt{\frac{(n-1)n_1}{n\,n_0}}\, \E[\vhn]^{1/2} \leq \sqrt{\frac{(n-1)n_1}{n\,n_0}}\, \left(\vhn^{1/2} +(b-a) \sqrt{\frac{c(n_0,n_1)\log(1/\delta)}{n}}\right).
    \end{equation*}
    Combining these two results with a union bound and gathering terms, we obtain
    \begin{align*}
        \left\vert\tauh_n - \tau_n\right\vert &\leq
        \begin{aligned}[t]
        &\sqrt{2\frac{n-1}{n}\frac{n_0+1}{n_0} \frac{\vhn}{n}\log(1/\delta)}\\
        &+ \frac{b-a}{n}\log(1/\delta)\left[\sqrt{2\frac{n-1}{n}\frac{n_0+1}{n_0}\,c(n_0,n_1)} + \left(\frac{4n}{3n_1} + \sqrt{\frac{n(n_1-1)}{n_1\,(n_0+1)}}\right) \frac{n-1}{n_0}\right]
        \end{aligned}\\
        &=\begin{aligned}[t]
        &\sqrt{\frac{2(1+\epsilon''_n) \vhn\log(1/\delta)}{n}}\\
        &+
        \frac{b-a}{n}\log(1/\delta)\left[\sqrt{2(1+\epsilon'''_n)\,c(n_0,n_1)} + \left(\frac{4}{3} + \sqrt{\frac{n_1(n_1-1)}{n\,(n_0+1)}}\right) \frac{n(n-1)}{n_0n_1}\right],
        \end{aligned}
    \end{align*}
    with probability $1-4\delta$. Setting $\delta = \alpha/4$, we find the Bernstein-type confidence interval.

    \textit{Bennett Interval} For the tighter confidence interval, we invoke the second concentration inequality in Theorem~\ref{thm:bm} which in our setting yields
    \begin{equation*}
        \left\vert\tauh_n-\tau_n\right\vert \leq \frac{v}{2(b-a)}\frac{n_0}{n-1}\, h^{-1}_2\!\!\left(\frac{4(b-a)^2}{v}\frac{(n-1)^2}{n_0^2}\frac{\log(1/\delta)}{n_1}\right),
    \end{equation*}
    with probability $1-3\delta$. In the formula of $v$, we replace $\sigma^2$ with the upper bound~\eqref{eq:sigma-upper-bound} and obtain
    \begin{align*}
        v &=  \frac{n_1^2}{n_0^2}\Bigg[\frac{n_0+1}{n}\frac{n-1}{n}\frac{n_1}{n_0}\E[\vhn]+ \sqrt{\frac{n-1}{n}\frac{n_1}{n_0}\E[\vhn]} \frac{(n-1)(n_1-1)}{n_0\,n}(b-a) \sqrt{\frac{2\log(4/\delta)}{n_1-1}} \Bigg]\\
        &= \frac{n_1^2}{n_0^2}\left[(1+\epsilon'''_n)\,\frac{n_1}{n}\,\E[\vhn] + \sqrt{\frac{(n-1)^3(n_1-1)n_1}{n_0^3\,n^3}}\E[\vhn]^{1/2}(b-a)\sqrt{2\log(4/\delta)}\right].
    \end{align*}
    Lastly, we employ the concentration inequality in Theorem~\ref{thm:neyman-conc} for the oracle $\E[\vhn]^{1/2}$-term. Combining the results via a union bound, we get that with probability $1-4\delta$,
     \begin{equation*}
        \lvert\tauh_n - \tau_n \rvert \leq \frac{n_1^2}{(n-1)n_0}  \frac{\tilde{V}}{2(b-a)} h^{-1}_2\left(\frac{(n-1)^2}{n_1^2}\frac{4(b-a)^2\log(1/\delta)}{n_1\,\tilde{V}}\right),
    \end{equation*}
    where
     \begin{align*}
        \tilde{V} = &(1+\epsilon_n''')\,\frac{n_1}{n}\,\left(\vhn^{1/2}+ (b-a)\sqrt{c(n_0,n_1)\log(1/\delta)}\right)^2\\
        &+ \sqrt{\frac{(n-1)^3(n_1-1)n_1}{n_0^3\,n^3}}\left(\vhn^{1/2} + (b-a)\sqrt{c(n_0,n_1)\log(1/\delta)}\right)(b-a)\sqrt{2\log(1/\delta)}.
    \end{align*}
    Setting $\delta = \alpha/4$ concludes the proof.
\end{proof}

\subsection{Stratified Randomized Experiment}\label{app:scre}

\begin{proof}[Proof of Proposition~\ref{prop:scre}]
    Throughout this proof, we tacitly assume $j\in \{1,\ldots,J\}$.

    \textit{Hoeffding Interval} In the proof of Proposition~\ref{prop:cre-hoeffding}, we have shown that
    \begin{equation*}
        \E\left[\exp\left(\lambda_j (\tauh_{n_j}^j - \E[\tauh_{n_j}^j])\right)\right] \leq
        \exp\left(\frac{\lambda_j^2}{8}\left(\frac{b-a}{2}\right)^2 \frac{n_j^3}{n_{j,0}^2\,n_{j,1}^2} (1+\epsilon_{n_j}')\right),
    \end{equation*}
    for all $j$ and $\lambda_j \in \R$. Let now $\lambda \in \R$ and set $\lambda_j = \lambda s_j$. Due to independence of the $\tauh_{n_j}^j$, we obtain
    \begin{align*}
        \E\Big[\exp\left(\lambda (\tauh_n-\tau_n)\right)\Big] &=
        \E\left[\exp\left(\lambda \sum_{j=1}^J s_j (\tauh_{n_j}^j - \tau_n)\right)\right] \\
        &\leq \exp\left(
        \frac{\lambda^2}{8} \left(\frac{b-a}{2}\right)^2 \sum_{j=1}^J s_j^2 \frac{n_j^3}{n_{j,0}^2\,n_{j,1}^2} (1+\epsilon_{n_j}')\right).
    \end{align*}
    Inverting this bound on the moment generating function analogously to Theorem~\ref{prop:rina-hoeffding}, we see that with probability $1-\alpha$,
    \begin{equation*}
        \left\vert \tauh_n-\tau_n \right\vert \leq \frac{b-a}{2} \sqrt{\sum_{j=1}^J \frac{n_j^5}{n\, n_{j,0}^2\,n_{j,1}^2}(1+\epsilon_{n_j}')}\sqrt{\frac{\log(2/\alpha)}{2n}}.
    \end{equation*}

    \textit{Bernstein Interval} To generalize the confidence interval to SCREs, we separately extend Theorem~\ref{thm:neyman-conc} and the underlying \emph{oracle} Bernstein concentration inequality and combine them via a union bound.

    First, we consider $\vhns$. From the proof of Theorem~\ref{thm:neyman-conc}, we know that
    \begin{equation*}
        \E\left[\exp\left(\lambda_j\,(\E[\vhn^j]-\vhn^j)\right)\right] \leq
        \frac{\lambda_j^2c_j\E[\vhn^j]}{1+c_j\lambda_j},
    \end{equation*}
    for each each $j$ and all $\lambda_j \geq 0$, where $c_j := (b-a)^2c(n_{j,0},n_{j,1})/n_j$. Let $\lambda \in\R$ and set $\lambda_j = s_j^2 \lambda$. Then, we can again use independence between the strata to bound the moment generating function:
    \begin{multline*}
        \E\left[\exp\left(\lambda (\E[\vhns]-\vhns)\right)\right] = 
        \E\left[\exp\left(\lambda\sum_{j=1}^J s_j^2\, (\E[\vhn^j]-\vhn^j)\right)\right]\\
        \leq \exp\left(\sum_{j=1}^J \frac{\lambda^2s_j^2\,c_js_j^2 \E[\vhn^j]}{1+\lambda c_j s_j^2}\right)
        \leq \exp\Bigg(\frac{\lambda^2 c^*}{1+\lambda c^*} \underbrace{\sum_{j=1}^J s_j^2 \E[\vhn^j]}_{\E[\vhns]}\Bigg),
    \end{multline*}
    where $c^* := \max_{j} c_j s_j^2$. Tracing the remainder of the proof of Theorem~\ref{thm:neyman-conc} with $c$ replaced by $c^*$, we obtain that
    \begin{equation}\label{eq:neyman-scre}
        \E[\vhns]^{1/2} \leq (\vhns)^{1/2} + (b-a) \max_{j} \left\{\frac{n_j}{n}\sqrt{c(n_{j,0},n_{j,1})}\right\} \sqrt{\frac{\log(1/\delta)}{n}},
    \end{equation}
    with probability $1-\delta$ for any $\delta \in (0,1)$.

    Second, we turn to the oracle Bernstein concentration inequality. Due to Theorem~\ref{thm:rina-bernstein}, we know that
    \begin{equation*}
        \E\left[\exp\left(\lambda_j (\tauh^j_{n_j}-\E[\tauh^j_{n_j}])\right)\right] \leq 
        \exp\left(\frac{\lambda^2_j (1+\epsilon_{n_j}') \Big(\frac{n_{j,0}\,n_{j,1}}{n_j^2}+4\epsilon_{n_j}'\Big) \frac{n_j-1}{n_{j,0}\,n_{j,1}} \E[\vhn^j]}{2\Big(1-\frac{\lvert \lambda_j\rvert}{3} \frac{n_j-1}{n_{j,0}\,n_{j,1}} (b-a)(1+\epsilon_{n_j}')
        \Big)}\right)
    \end{equation*}
    for all $j$ and $\lambda_j$ satisfying $\lvert \lambda_j\rvert < \frac{3\,n_{j,0}n_{j,1}}{(n_j-1)(b-a)(1+\epsilon_{n_j}')}$.
    Let $\lambda \in \R$ such that
    \begin{equation*}
        \lvert \lambda \rvert < \frac{3}{(b-a)\max_{j} \left\{(1+\epsilon_{n_j}')(n_j-1)/(n_{j,0}\,n_{j,1})\right\}},
    \end{equation*}
    set $\lambda_j = \lambda s_j$ and recall the definiton of $\epsilon_{n_j}'' := 4\epsilon_{n_j}'\frac{n_j(n_j-1)}{n_{j,0}\,n_{j,1}}-\frac{1}{n_j}$. We again use independence between strata to combine the moment generating functions of the individual strata and then find an upper bound:
    \begin{align*}
        \E\left[\exp\left(\lambda (\tauh_n-\tau_n)\right)\right] &= \E\left[\exp\left(\lambda \sum_{j=1}^J s_j(\tauh_{n_j}^j-\tau_n)\right)\right]\\
        &\leq \exp\left(\lambda^2\sum_{j=1}^J \frac{s_j^2(1+\epsilon_{n_j}') \frac{1+\epsilon_{n_j}''}{n_j} \E[\vhn^j]}{2\Big(1-\frac{\lvert \lambda\rvert s_j}{3} \frac{n_j-1}{n_{j,0}\,n_{j,1}} (b-a)(1+\epsilon_{n_j}')
        \Big)}\right)\\
        &\leq \exp\left(\frac{\lambda^2 \max_{j}\left\{(1+\epsilon_{n_j}')(1+\epsilon_{n_j}'')/n_j\right\} \E[\vhns]}{2\Big(1-\frac{\lvert \lambda \rvert}{3}(b-a) \max_{j} \left\{(1+\epsilon_{n_j}')(n_j-1)/(n_{j,0}\,n_{j,1})\right\}\Big)}\right)
    \end{align*}
    Inverting this bound similarly to Theorem~\ref{thm:rina-bernstein}, we obtain that with probability $1-2\delta$,
    \begin{multline}\label{eq:or-bernstein-scre}
        \lvert \tauh_n -\tau_n\rvert \leq \sqrt{2 \max_j\left\{\frac{(1+\epsilon_{n_j}')(1+\epsilon_{n_j}'')}{n_j}\right\} \E[\vhns] \log(2/\delta)}\\
        + \frac{b-a}{3} \max_j \left\{\frac{(1+\epsilon_{n_j}')(n_j-1)}{n_{j,0}\,n_{j,1}}\right\} \log(2/\delta).
    \end{multline}
    Lastly, we combine the inequalities~\eqref{eq:neyman-scre} and~\eqref{eq:or-bernstein-scre} via union bound, set $\delta = \alpha/3$ and gather terms. Thus, we arrive at
    \begin{multline*}
        \lvert \tauh_n-\tau \rvert \leq \sqrt{2 \max_j\left\{\frac{(1+\epsilon_{n_j}')(1+\epsilon_{n_j}'')}{n_j}\right\} \vhns \log(3/\alpha)}\\
        +\frac{b-a}{n}\log(3/\alpha) \Bigg[\frac{1}{3} \max_j \left\{\frac{(1+\epsilon_{n_j}')n(n_j-1)}{n_{j,0}\,n_{j,1}}\right\}\\
        + \sqrt{2\max_j\left\{\frac{(1+\epsilon_{n_j}')(1+\epsilon_{n_j}'')\,n}{n_j}\right\}} \,\,\max_{j} \left\{\frac{n_j}{n}\sqrt{c(n_{j,0},n_{j,1})}\right\}\Bigg],
    \end{multline*}
    with probability $1-\alpha$.    
\end{proof}

\section{Algebraic Identities}\label{app:lin-alg}

\begin{lemma}\label{lem:perm-avg}
    For any matrix $B \in \R^{n\times n}$ and arbitrary permutation matrices $\Pi \in \R^{n\times n}$,
    \begin{equation*}
        \frac{1}{n!} \sum_{\Pi} \Pi^T B\, \Pi = \frac{\tr(B)}{n} I +\frac{\one^TB\,\one-\tr(B)}{n(n-1)}(\one\one^T-I),
    \end{equation*}
    where $I \in \R^{n\times n}$ is the identity matrix.
\end{lemma}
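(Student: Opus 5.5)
The identity will follow from computing the $(k,l)$ entry of the average entrywise and exploiting the transitivity of the symmetric group on indices and on ordered pairs of distinct indices. For a permutation matrix $\Pi$ associated with a permutation $\sigma$ of $[n]$, write $(\Pi^T B\,\Pi)_{kl} = B_{\sigma(k)\sigma(l)}$. With an appropriate convention for $\Pi$ one may instead get $\sigma^{-1}$, but this is irrelevant: $\sigma\mapsto\sigma^{-1}$ is a bijection of the group, so the average is unchanged. The average matrix $M := \frac{1}{n!}\sum_\Pi \Pi^T B\,\Pi$ therefore has entries $M_{kl} = \frac{1}{n!}\sum_\sigma B_{\sigma(k)\sigma(l)}$.

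I would then split into diagonal and off-diagonal entries. For $k=l$, as $\sigma$ ranges over all $n!$ permutations, $\sigma(k)$ takes each value $i\in[n]$ exactly $(n-1)!$ times. Hence $M_{kk} = \frac{(n-1)!}{n!}\sum_i B_{ii} = \tr(B)/n$.

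For $k\neq l$, the ordered pair $(\sigma(k),\sigma(l))$ takes each ordered pair $(i,j)$ with $i\neq j$ exactly $(n-2)!$ times. Hence $M_{kl} = \frac{1}{n(n-1)}\sum_{i\neq j} B_{ij} = \frac{\one^T B\,\one - \tr(B)}{n(n-1)}$.

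Finally, I would assemble these entries into matrix form. A matrix whose diagonal entries all equal $d$ and whose off-diagonal entries all equal $o$ is $d\,I + o\,(\one\one^T - I)$, because $\one\one^T - I$ has zeros on the diagonal and ones elsewhere. This gives exactly the stated formula.

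There is no real obstacle. The only care needed is the counting of permutations with prescribed values at one or two points, together with the remark that the transpose/inverse convention for $\Pi$ does not matter. The case $n=1$ makes the off-diagonal term vacuous, so I would assume $n\ge 2$, as in the application within the proof of Theorem~\ref{thm:mine-bernstein}.
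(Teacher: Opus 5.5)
Your proposal is correct and follows the same route as the paper. Both compute the average entrywise via $(\Pi^T B\,\Pi)_{kl}=B_{\sigma(k)\sigma(l)}$, count $(n-1)!$ and $(n-2)!$ permutations for the diagonal and off-diagonal entries, and then assemble the result as $d\,I+o\,(\one\one^T-I)$; your remarks on the $\sigma$ versus $\sigma^{-1}$ convention and on needing $n\ge 2$ are harmless additions the paper leaves implicit.
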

\begin{proof}
    Let $\one:=(1,\ldots,1)^T\in \R^n$, let $\mathcal{S}_n$ be the permutation group and $i,j\in\{1,\ldots,n\}$. For any permutation matrix $\Pi$ we denote its corresponding permutation $\pi \in \mathcal{S}_n$. Notably, we have $(\Pi^T B\, \Pi)_{i,j} = B_{\pi(i),\pi(j)}$. We now evaluate the on- and off-diagonal entries separately. For $i=j$, we find
    \begin{equation*}
        \frac{1}{n!}\sum_\Pi (\Pi^T B\, \Pi)_{i,i}  = \frac{1}{n!}\sum_{\pi\in \mathcal{S}_n} B_{\pi(i),\pi(i)}= \frac{1}{n!} \sum_{\substack{\pi\in\mathcal{S}_n,\\\pi(i)=l}}\,\, \sum_{l=1}^n B_{l,l} = \frac{(n-1)!}{n!} \tr(B) = \frac{\tr(B)}{n},
    \end{equation*}
    for $i\neq j$, we obtain
    \begin{align*}
        \frac{1}{n!}\sum_\Pi (\Pi^T B\, \Pi)_{i,j}  &= \frac{1}{n!}\sum_{\pi\in \mathcal{S}_n} B_{\pi(i),\pi(j)}= \frac{1}{n!} \sum_{\substack{\pi\in\mathcal{S}_n,\\\pi(i)=l,\\\pi(j)=l'}}\,\, \sum_{\substack{l,l'=1\\ l\neq l'}}^n B_{l,l'} = \frac{(n-2)!}{n!} (\one^T B\, \one - \tr(B))\\[-2ex]
        &=\frac{\one^TB\,\one-\tr(B)}{n(n-1)}.
    \end{align*}
    Assembling, the entry-wise results in a matrix concludes the proof.
\end{proof}

\begin{lemma}\label{lem:emp-mean-perturb}
    Let $I$ be a finite index set with $\lvert I \rvert=n$ and let the collection $(x_i)_{i\in I}$ be real-valued. Then, for any $l \in I$,
	\begin{equation*}
		\sum_{i \in I} (x_i - \xbs)^2 = \sum_{i \in I \setminus\{l\}} (x_i - \xbs_{-l})^2 + \frac{n-1}{n} (x_l - \xbs_{-l})^2,
	\end{equation*}
	where $\xbs:=\frac{1}{n}\sum_{i\in I} x_i$ and $\xbs_{-l}:=\frac{1}{n-1}\sum_{i \in I \setminus\{l\}}^n x_i$.
\end{lemma}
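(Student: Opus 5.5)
This is the leave-one-out identity for sums of squares, and I would prove it by direct algebra around the leave-one-out mean $\xbs_{-l}$. The first step relates the two means:
\begin{equation*}
    n\xbs = (n-1)\xbs_{-l} + x_l,
\end{equation*}
so that
\begin{equation*}
    \xbs - \xbs_{-l} = \frac{x_l-\xbs_{-l}}{n}.
\end{equation*}

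Next I use the standard shift formula for sums of squares. For any constant $c$,
\begin{equation*}
    \sum_{i\in I}(x_i-c)^2 = \sum_{i\in I}(x_i-\xbs)^2 + n(\xbs-c)^2,
\end{equation*}
because the cross term vanishes. Choosing $c=\xbs_{-l}$ and rearranging gives
\begin{equation*}
    \sum_{i\in I}(x_i-\xbs)^2 = \sum_{i\in I}(x_i-\xbs_{-l})^2 - n(\xbs-\xbs_{-l})^2.
\end{equation*}

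I then split off the $i=l$ term on the right, which is $(x_l-\xbs_{-l})^2$. The leftover sum over $I\setminus\{l\}$ is exactly the first term in the claim. By the first step, the correction term equals
\begin{equation*}
    n\cdot\frac{(x_l-\xbs_{-l})^2}{n^2}=\frac{(x_l-\xbs_{-l})^2}{n}.
\end{equation*}
Combining, the total coefficient on $(x_l-\xbs_{-l})^2$ is $1-\tfrac1n=\tfrac{n-1}{n}$, which is the stated identity.

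None of these steps is a real obstacle. The only point needing care is the sign in the shift formula and the factor $1/n$ relating $\xbs$ to $\xbs_{-l}$. The identity needs $n\ge 2$ so that $\xbs_{-l}$ is defined.
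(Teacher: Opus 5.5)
Your proof is correct, and it takes a slightly different route from the paper's.

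The paper works from the right-hand side toward the left. It substitutes $\xbs_{-l}=(n\xbs-x_l)/(n-1)$ and rewrites $x_i-\xbs_{-l}=(x_i-\xbs)+(x_l-\xbs)/(n-1)$ and $x_l-\xbs_{-l}=\frac{n}{n-1}(x_l-\xbs)$. It then expands the square and collects the cross term using $\sum_{i\neq l}(x_i-\xbs)=\xbs-x_l$. The coefficients $-\frac{2}{n-1}+\frac{1}{n-1}+\frac{n}{n-1}$ sum to $1$, which restores the $i=l$ term of the full sum.

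You instead start from the left-hand side and apply the shift (K\"onig--Huygens) identity to all of $I$ with $c=\xbs_{-l}$. The sum over $I\setminus\{l\}$ then appears directly, and only the single correction $n(\xbs-\xbs_{-l})^2=(x_l-\xbs_{-l})^2/n$ has to be tracked. The paper's expansion is in effect the same shift identity applied to the $n-1$ points of $I\setminus\{l\}$, moving the centre from $\xbs_{-l}$ to $\xbs$. The paper simply carries out the cross-term bookkeeping by hand rather than quoting the identity.

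Your version is shorter and makes the origin of the factor $\frac{n-1}{n}$ transparent. The paper's is a self-contained expansion that relies on no auxiliary identity. Your remark that $n\geq 2$ is needed for $\xbs_{-l}$ to be defined is also right. The paper leaves this implicit; it holds in the application because $n_0,n_1\geq 2$.
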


\begin{proof}
	By definition, $\xbs_{-l} = \frac{n \xbs - x_l}{n-1}$. Plugging this equation into the right-hand side of the statement and simplifying the resulting expression, we obtain
{\allowdisplaybreaks
    \begin{align*}
		 \sum_{i \in I \setminus\{l\}}^n (x_i - \xbs_{-l})^2 &+ \frac{n-1}{n} (x_l - \xbs_{-l})^2\\[-2ex]
		  	&=\sum_{I \setminus\{l\}} \left((x_i-\xbs)+\frac{x_l-\xbs}{n-1}\right)^2 + \frac{n}{n-1} (x_l - \xbs)^2\\
		 &= \sum_{I \setminus\{l\}} (x_i-\xbs)^2 + 2(x_i-\xbs)\frac{x_l-\xbs}{n-1}  + \frac{(x_l - \xbs)^2}{n-1} + \frac{n}{n-1} (x_l - \xbs)^2\\
		 &= \sum_{I \setminus\{l\}} (x_i-\xbs)^2 + 2\Big(n\xbs - x_l - (n-1)\xbs\Big) \frac{x_l-\xbs}{n-1} + \frac{n+1}{n-1} (x_l - \xbs)^2\\
		 &= \sum_{I \setminus\{l\}} (x_i-\xbs)^2 - \frac{2}{n-1}(x_l-\xbs)^2+ \frac{n+1}{n-1} (x_l - \xbs)^2\\
		 &= \sum_{i\in I} (x_i-\xbs)^2.
    \end{align*}
}
\end{proof}

\end{appendix}

\end{document}